\newcommand{\mm}{\mathfrak{m}}
    \def\Xint#1{\mathchoice
    {\XXint\displaystyle\textstyle{#1}}%
    {\XXint\textstyle\scriptstyle{#1}}%
    {\XXint\scriptstyle\scriptscriptstyle{#1}}%
    {\XXint\scriptscriptstyle\scriptscriptstyle{#1}}%
    \!\int}
    \def\XXint#1#2#3{{\setbox0=\hbox{$#1{#2#3}{\int}$}
    \vcenter{\hbox{$#2#3$}}\kern-.5\wd0}}
    \def\dashint{\Xint-}
\def\Xint#1{\mathchoice
    {\XXint\displaystyle\textstyle{#1}}%
    {\XXint\textstyle\scriptstyle{#1}}%
    {\XXint\scriptstyle\scriptscriptstyle{#1}}%
    {\XXint\scriptscriptstyle\scriptscriptstyle{#1}}%
    \!\int}
    \def\XXint#1#2#3{{\setbox0=\hbox{$#1{#2#3}{\int}$}
    \vcenter{\hbox{$#2#3$}}\kern-.5\wd0}}
\renewcommand{\chi}{{\bf 1}}
\theoremstyle{plain}
\newtheorem{theorem}[equation]{Theorem}
\newtheorem{lemma}[equation]{Lemma}
\newtheorem{proposition}[equation]{Proposition}
\theoremstyle{definition}
\newtheorem{definition}[equation]{Definition}
\theoremstyle{remark}
\newtheorem{remark}[equation]{Remark}
\newtheorem{notation}[equation]{Notation}
\numberwithin{equation}{section}
\def \R{ \mathbb{R} }
\def \N{ \mathbb{N} }
\def \Z{ \mathbb{Z} }
\def \Scal{ \mathcal{S} }
\def \hh{ \mathrm{H} }
\def \pp{ \mathrm{P} }
\def \Gcal { \mathcal{G} }
\def \Grm{ \mathrm{G} }
\def \Ncal { \mathcal{N} }
\def \mol{ (w,p,\varepsilon,M)-\textrm{molecule}}
\def \p{ (w,p,\varepsilon,M)-\textrm{representation}}
\def \iint{\int\!\!\!\int}
\def\div{\mathop{\rm div}}
\renewcommand{\Re}{{\rm Re}\,}
\DeclareMathOperator{\supp}{supp}
\begin{document}
\allowdisplaybreaks
\author{Jos\'e Mar{\'\i}a Martell}
\address{Jos\'e Mar{\'\i}a Martell
\\
Instituto de Ciencias Matem\'aticas CSIC-UAM-UC3M-UCM
\\
Consejo Superior de Investigaciones Cient{\'\i}ficas
\\
C/ Nicol\'as Cabrera, 13-15
\\
E-28049 Madrid, Spain} \email{chema.martell@icmat.es}

\title[Weighted Hardy spaces associated with elliptic operators]{Weighted Hardy spaces associated with elliptic operators.
\\[0.3cm]
{\small Part II: Characterizations of $H^1_L(\lowercase{w})$}}

\author{Cruz Prisuelos-Arribas}

\address{Cruz Prisuelos-Arribas
Instituto de Ciencias Matem\'aticas, CSIC-UAM-UC3M-UCM
\\
Consejo Superior de Investigaciones Cient{\'\i}ficas
\\
C/ Nicol\'as Cabrera, 13-15
\\
E-28049 Madrid, Spain} \email{cruz.prisuelos@icmat.es}

\thanks{The research leading to these results has received funding from the European Research
Council under the European Union's Seventh Framework Programme (FP7/2007-2013)/ ERC
agreement no. 615112 HAPDEGMT.
Both authors  acknowledge financial support from the Spanish Ministry of Economy and Competitiveness, through the ``Severo Ochoa Programme for Centres of Excellence in R\&D'' (SEV-2015-0554). Both authors would like to thank P. Auscher for his useful comments and suggestions. 
}

\date{ January 10, 2017. \textit{Revised}: \today}
\subjclass[2010]{42B30, 35J15, 42B37, 42B25, 47D06, 47G10}

\keywords{Hardy spaces, second order divergence form elliptic operators, heat and Poisson semigroups, conical square functions, non-tangential maximal functions, molecular decomposition, Muckenhoupt weights, off-diagonal estimates}

\begin{abstract}
Given a Muckenhoupt weight $w$ and a second order divergence form elliptic operator $L$, we consider different versions of the weighted Hardy space $H^1_L(w)$ defined by 
conical square functions and non-tangential maximal functions associated with the heat and Poisson semigroups generated by $L$. We show that all of them are isomorphic and also that $H^1_L(w)$ admits a molecular characterization.  One of the advantages of our methods is that our assumptions extend naturally the unweighted theory developed by 
S. Hofmann and S. Mayboroda in \cite{HofmannMayboroda} and we can immediately recover the unweighted case.  Some of our tools consist in establishing weighted norm inequalities for the non-tangential maximal functions, as well as comparing them with some conical square functions in weighted Lebesgue spaces. 
\end{abstract}

\maketitle

\tableofcontents

\bigskip
\section{Introduction}
This is the second of a series of three papers whose aim is to study and develop a theory for weighted Hardy spaces associated with different operators. 
The study of Hardy spaces began in the early 1900s in the context of Fourier series and complex analysis in one variable. It was not until 1960 when the theory in $\R^n$ started developing by
E.M. Stein and G. Weiss (\cite{SteinWeiss}). A few years later R.R. Coifman in \cite{Coifman} and R.H. Latter in \cite{Latter} gave an atomic decomposition of the Hardy spaces $H^p$, $0<p\leq 1$. This atomic decomposition turns out to be a very important tool when studying the boundedness of some singular integral operators, since
in most cases checking the action of the operator in question on these simpler elements (atoms) suffices to conclude its boundedness in the corresponding Hardy space.

Another stage in the progress of the theory of Hardy spaces was done by J. Garc\'ia-Cuerva in \cite{GarciaCuerva} (see also \cite{StrombergTorchinsky}) when he considered $\R^n$ with the measure given by a Muckenhoupt weight. These spaces were called weighted Hardy spaces, and among other contributions, he also characterized them using an atomic decomposition.

In general, the development of the theory of Hardy spaces has contributed to give us a better understanding of some other topics as in the theory of singular integrals operators, maximal functions, multiplier operators, etc. However, there are some operators that escape from the theory of these classical Hardy spaces. These are, for example, the operators associated with a second order divergence form elliptic operator $L$, such as 
the conical square functions and non-tangential maximal functions defined by the heat and Poisson  semigroups generated by the operator $L$, (see \eqref{square-H-1}-\eqref{square-P-3} and \eqref{nontangential1}--\eqref{nontangential2} for the precise definitions of these operators).

The theory of Hardy spaces associated with elliptic operators $L$ was initiated in an unpublished work by P. Auscher, X.T. Duong and A. McIntosh \cite{Auscher-Duong-McIntosh}. P. Auscher and E. Russ in \cite{Auscher-Russ} considered the case on which the heat kernel associated with $L$ is smooth and satisfies pointwise Gaussian bounds, this occurs for instance for real symmetric  operators.  There, among other things, it was shown that the corresponding  Hardy space associated with $L$ agrees with the classical Hardy space.
In the setting of Riemannian manifolds satisfying the doubling volume property,
Hardy spaces associated with the Laplace-Beltrami operator are introduced in
\cite{Auscher-McIntosh-Russ} by P. Auscher, A. McIntosh and E. Russ and it is
shown that they admit several characterizations.  Simultaneously, in the
Euclidean setting, the study of  Hardy spaces related to the conical square
functions and non-tangential maximal functions associated with the heat and
Poisson  semigroups generated by divergence form elliptic operators $L$ was
taken by S. Hofmann and S. Mayboroda in \cite{HofmannMayboroda}, for $p=1$. The
new point was that only a form of decay weaker than pointwise bounds
and satisfied in many occurrences was enough to develop a theory. This was
followed later on by a second article of S. Hofmann, S. Mayboroda,
and A. McIntosh  \cite{HofmannMayborodaMcIntosh}, for a general $p$ and
simultaneously by an article of R. Jiang and D. Yang \cite{JiangYang}.
 A natural line of study in the context of these new Hardy spaces is the development of a weighted theory for them, as J. Garc\'ia-Cuerva did in the classical setting. Some interesting progress has been done in this regard by T.A. Bui, J. Cao, L.D. Ky, D. Yang, and S. Yang in \cite{BuiCaoKyYangYang,  BuiCaoKyYangYang:II}. The results obtained in \cite{BuiCaoKyYangYang:II} 
in the particular case $\varphi(x,t):=tw(x)$,  where $w$ is a Muckenhoupt weight, give characterizations of the weighted Hardy spaces that, however, only recover part of the results obtained in the unweighted case by simply taking $w=1$.  

In this paper we take a further step, and present a different approach to the theory of weighted Hardy spaces $H^1_L(w)$ (the general case $H^p_L(w)$ will be treated in the forthcoming paper \cite{Prisuelos}) associated with a second order divergence form elliptic operator, which naturally generalizes the unweighted setting developed in \cite{HofmannMayboroda}. We define weighted Hardy spaces associated with the conical square functions considered in \eqref{square-H-1}--\eqref{square-P-3} which are written in terms of the heat and Poisson semigroups generated by the elliptic operator. Also, we use  non-tangential maximal functions as defined in \eqref{nontangential1}--\eqref{nontangential2}. We show that the corresponding spaces are all isomorphic and admit a molecular characterization. This is particularly useful to prove different properties of these spaces as happens in the classical setting and in the context of second order divergence form elliptic operators considered in \cite{HofmannMayboroda}.

Some of the ingredients that are crucial in the present work are taken from the first part of this series of papers \cite{MartellPrisuelos}, where we already obtained optimal ranges for the weighted norm inequalities satisfied by the heat and Poisson conical square functions associated with the elliptic operator. Here, we need to obtain analogous results for the non-tangential maximal functions  associated with the heat and Poisson semigroups (see Section \ref{section:NT}). All these weighted norm inequalities for the conical square  functions and the non-tangential maximal functions, along with the important fact that our molecules belong naturally to weighted Lebesgue spaces, allow us to impose natural conditions that in particular lead to fully recover the results obtained in \cite{HofmannMayboroda} by simply taking the weight identically one. It is relevant to note that in \cite{BuiCaoKyYangYang, BuiCaoKyYangYang:II} their molecules belong to unweighted Lebesgue spaces and also their ranges of boundedness of the conical square functions are smaller. This makes their hypothesis somehow stronger (although sometimes they cannot be compared with ours) and, despite making a very big effort to present a very general theory, the unweighted case does not follow immediately from their work.

The plan of this paper is as follows. In the next section we present some preliminaries concerning Muckenhoupt weights, elliptic operators and introduce the conical square functions and non-tangential maximal functions. In Section \ref{section-main} we define the different versions of the weighted Hardy spaces and state our main results. Section \ref{section:aux} contains some auxiliary results. Sections \ref{SH} and \ref{SKPG} deal with the characterization of the weighted Hardy spaces defined in terms of square functions associated with the heat and Poisson semigroups, respectively. Finally, in Section \ref{section:NT} we study the non-tangential maximal functions and the weighted Hardy spaces associated with them. 

\section{Preliminaries}
\subsection{Muckenhoupt weights}

We will work with Muckenhoupt weights $w$, which are locally integrable positive functions. We say that $w\in A_1$ if, for every ball $B\subset \mathbb{R}^n$, there holds
$$
\dashint_Bw(x) \, dx\leq C w(y), \quad \textrm{for a.e. } y\in B,
$$
or, equivalently, $\mathcal{M}_u w\le C\,w$  a.e. where $\mathcal{M}_u$ denotes the uncentered Hardy-Littlewood
 maximal operator over balls in $\R^n$.
For each $1<p<\infty$, we say that $w\in A_p$ if it satisfies
$$\left(\dashint_Bw(x) \, dx\right)\left(\dashint_Bw(x)^{1-p'} \, dx\right)^{p-1}
\leq C, \quad \forall B\subset \mathbb{R}^n.$$
The reverse H\"older classes are defined as follows: for each $1<s<\infty$,
$w\in RH_s$ if, for every ball $B\subset \mathbb{R}^n$, we have
$$\left(\dashint_Bw(x)^s \, dx\right)^{\frac{1}{s}}\leq C\dashint_Bw(x) \, dx.$$
For $s=\infty$, $w\in RH_{\infty}$ provided that there exists a constant $C$ such that for every ball $B\subset \mathbb{R}^n$
$$
w(y)\leq C\dashint_Bw(x) \, dx, \quad \textrm{for a.e. }y\in B.
$$
Notice that we have excluded the case $q = 1$ since the class $RH_1$ consists of all the
weights, and that is the way $RH_1$ is understood in what follows.

We sum up some of the properties of these classes in the following result, see for instance \cite{GCRF85},
\cite{Duo}, or \cite{Grafakos}. %For $(vii)$ see \cite{JN}.

\begin{proposition}\label{prop:weights}\
\begin{enumerate}
\renewcommand{\theenumi}{\roman{enumi}}
\renewcommand{\labelenumi}{$(\theenumi)$}
\addtolength{\itemsep}{0.2cm}

\item $A_1\subset A_p\subset A_q$ for $1\le p\le q<\infty$.

\item $RH_{\infty}\subset RH_q\subset RH_p$ for $1<p\le q\le \infty$.

\item If $w\in A_p$, $1<p<\infty$, then there exists $1<q<p$ such
that $w\in A_q$.

\item If $w\in RH_s$, $1<s<\infty$, then there exists $s<r<\infty$ such
that $w\in RH_r$.

\item $\displaystyle A_\infty=\bigcup_{1\le p<\infty} A_p=\bigcup_{1<s\le
\infty} RH_s$.

\item If $1<p<\infty$, $w\in A_p$ if and only if $w^{1-p'}\in
A_{p'}$.

\item For every $1<p<\infty$, $w\in A_p$ if and only if $\mathcal{M}$ is bounded
on $L^p(w)$. Also, $w\in A_1$ if and only if $\mathcal{M}$ is bounded from $L^1(w)$ into $L^{1,\infty}(w)$, where $\mathcal{M}$ denotes the centered Hardy-Littlewood maximal operator. 

%\item If $1\le q\le \infty$ and $1\le s<\infty$, then $\displaystyle w\in A_q \cap RH_s$ if and only if $ w^{s}\in A_{s\,(q-1)+1}$.

\end{enumerate}
\end{proposition}

\medskip

For a weight $w\in A_{\infty}$, define
\begin{align}\label{rw}
r_w:=\inf\{1\leq r<\infty : w\in A_{r}\},
\qquad
s_w:=\inf\{1\leq s<\infty : w\in RH_{s'}\}.
\end{align}
Notice that according to our definition $s_w$ is the conjugated exponent of the one defined in \cite[Lemma 4.1]{AuscherMartell:I}.
Given $0\le p_0<q_0\le \infty$, $w\in A_{\infty}$, and according to \cite[Lemma 4.1]{AuscherMartell:I} we have
\begin{align}\label{intervalrs}
\mathcal{W}_w(p_0,q_0):=\left\{p : p_0<p<q_0, w\in A_{\frac{p}{p_0}}\cap RH_{\left(\frac{q_0}{p}\right)'}\right\}
=
\left(p_0r_w,\frac{q_0}{s_w}\right).
\end{align}
If $p_0=0$ and $q_0<\infty$ it is understood that the only condition that stays is $w\in RH_{\left(\frac{q_0}{p}\right)'}$. Analogously, if $0<p_0$ and $q_0=\infty$ the only assumption is $w\in A_{\frac{p}{p_0}}$. Finally $\mathcal{W}_w(0,\infty)=(0,\infty)$.

We recall some properties of Muckenhoupt weights. Let $w$ be a  weight in $A_{\infty}$,
if $w\in A_{r}$, $1\leq r<\infty$,
for every ball $B$ and every measurable set $E\subset B$,
\begin{align}\label{pesosineq:Ap}
\frac{w(E)}{w(B)}\ge [w]_{A_{r}}^{-1}\left(\frac{|E|}{|B|}\right)^{r}.
\end{align}
This implies in particular that $w$ is a doubling measure:
\begin{align}\label{doublingcondition}
w(\lambda B)
\le
[w]_{A_r}\,\lambda^{n\,r}w(B),
\qquad \forall\,B,\ \forall\,\lambda>1.
\end{align}
Besides, if $w\in RH_{s'}$, $1\leq s<\infty$,
\begin{align}\label{pesosineq:RHq}
\frac{w(E)}{w(B)}
\leq
[w]_{RH_{s'}}\left(\frac{|E|}{|B|}\right)^{\frac{1}{s}}.
\end{align}
\subsection{Elliptic operators}
Let $A$ be an $n\times n$ matrix of complex and
$L^\infty$-valued coefficients defined on $\R^n$. We assume that
this matrix satisfies the following ellipticity (or \lq\lq
accretivity\rq\rq) condition: there exist
$0<\lambda\le\Lambda<\infty$ such that
\begin{align}\label{matrix:Aproperties}
\lambda\,|\xi|^2
\le
\Re A(x)\,\xi\cdot\bar{\xi},
\quad\qquad\mbox{and}\qquad\quad
|A(x)\,\xi\cdot \bar{\zeta}|
\le
\Lambda\,|\xi|\,|\zeta|,
\end{align}
for all $\xi,\zeta\in\mathbb{C}^n$ and almost every $x\in \R^n$. We have used the notation
$\xi\cdot\zeta=\xi_1\,\zeta_1+\cdots+\xi_n\,\zeta_n$ and therefore
$\xi\cdot\bar{\zeta}$ is the usual inner product in $\mathbb{C}^n$. 
Associated with this matrix we define the second order divergence
form elliptic operator
\begin{align}\label{matrix:A}
L f
=
-\div(A\,\nabla f),
\end{align}
which is understood in the standard weak sense as a maximal-accretive operator on $L^2(\R^n)$ with domain $\mathcal{D}(L)$ by means of a
sesquilinear form.

As in \cite{Auscher} and \cite{AuscherMartell:II}, we denote respectively by $(p_-(L),p_+(L))$ and $(q_-(L),q_+(L))$ the maximal open intervals on which the heat semigroup $\{e^{-tL}\}_{t>0}$ and its 
gradient $\{\sqrt{t}\nabla_y e^{-tL}\}_{t>0}$
are uniformly bounded on $L^p(\mathbb{R}^n)$:
\begin{align}\label{p-}
p_-(L) &:= \inf\left\{p\in(1,\infty): \sup_{t>0} \|e^{-t^2L}\|_{L^p(\mathbb{R}^n)\rightarrow L^p(\mathbb{R}^n)}< \infty\right\},
\\[4pt]
p_+(L)& := \sup\left\{p\in(1,\infty) : \sup_{t>0} \|e^{-t^2L}\|_{L^p(\mathbb{R}^n)\rightarrow L^p(\mathbb{R}^n)}< \infty\right\},
\label{p+}
\end{align}
\begin{align}\label{q-}
q_-(L) &:= \inf\left\{p\in(1,\infty): \sup_{t>0} \|t\nabla_y e^{-t^2L} \|_{L^p(\mathbb{R}^n)\rightarrow L^p(\mathbb{R}^n)}< \infty\right\},
\\[4pt]
q_+(L)& := \sup\left\{p\in(1,\infty) : \sup_{t>0} \|t\nabla_y e^{-t^2L} \|_{L^p(\mathbb{R}^n)\rightarrow L^p(\mathbb{R}^n)}< \infty\right\}.\label{q+}
\end{align}
From \cite{Auscher} (see also \cite{AuscherMartell:II}) we know that $p_-(L)=1$ and  $p_+(L)=\infty$ if $n=1,2$; and if $n\ge 3$ then $p_-(L)<\frac{2\,n}{n+2}$ and $p_+(L)>\frac{2\,n}{n-2}$. Moreover, $q_-(L)=p_-(L)$, $ q_+(L)^*\le p_+(L)$ (where $q_+(L)^*$ is the Sobolev exponent of $q_+(L)$ as defined below), and we always have $q_+(L)>2$, with $q_+(L)=\infty$ if $n=1$.

Note that in place of the semigroup $\{e^{-t L}\}_{t>0}$ we are using its rescaling $\{e^{-t^2 L}\}_{t>0}$. We do so since all the ``heat'' square functions are written using the latter and also because in the context of the off-diagonal estimates discussed below it will simplify some computations.

Besides, for every $K\in\mathbb{N}_0$ and $0<q<\infty$ let us set
$$
q^{K,*}:=
\left\{
\begin{array}{ll}
\dfrac{q\,n}{n-(2K+1)\,q}, &\quad\mbox{ if}\quad(2K+1)\,q<n,
\\[10pt]
\infty, &\quad\mbox{ if}\quad(2K+1)\,q\ge n.
\end{array}
\right.
$$
Corresponding to the case $K=0$, we write $q^{*}:=q^{0,*}$.
%%%%%%%%%%%%%%%%%%%%%%%%%%%%%%%%%%%
%%%%%%%%%%%%%%%%%%%%%%%%%%%%%%%%%%%
\subsection{Off-diagonal estimates}\label{section:OD}
%%%%%%%%%%%%%%%%%%%%%%%%%%%%%%%%%%%

We briefly recall the notion of off-diagonal estimates. Let $\{T_t\}_{t>0}$ be a family of linear operators
and let $1\le p\leq q\le \infty$. We say that $\{T_t\}_{t>0}$ satisfies $L^p(\R^n)-L^q(\R^n)$ off-diagonal estimates of exponential type, denoted by $\{T_t\}_{t>0}\in \mathcal{F}_\infty(L^p\rightarrow L^q)$, if for all closed sets $E$, $F$, all $f$, and all $t>0$ we have
$$
\|T_{t}(f\,\chi_E)\,\chi_F\|_{L^q(\mathbb{R}^n)}
\leq
Ct^{-n\left(\frac{1}{p}-\frac{1}{q}\right)}e^{-c\frac{d(E,F)^2}{t^2}}\|f\,\chi_E\|_{L^p(\mathbb{R}^n)}.
$$
Analogously, given $\beta>0$, we say that $\{T_t\}_{t>0}$ satisfies $L^p-L^q$ off-diagonal estimates of polynomial type with order $\beta>0$, denoted by $\{T_t\}_{t>0}\in \mathcal{F}_{\beta}(L^p\rightarrow L^q)$ if for all closed sets $E$, $F$, all $f$, and all $t>0$ we have
$$
\|T_{t}(f\,\chi_E)\,\chi_F\|_{L^q(\mathbb{R}^n)}
\leq
Ct^{-n\left(\frac{1}{p}-\frac{1}{q}\right)}\left(1+\frac{d(E,F)^2}{t^2}
    \right)^{-\left(\beta+\frac{n}{2}\left(\frac{1}{p}-\frac{1}{q}\right)\right)}
\|f\,\chi_E\|_{L^p(\mathbb{R}^n)}.
$$

\medskip

The heat and Poisson semigroups satisfy respectively off-diagonal estimates of exponential and polynomial type. Before making this precise, let us recall the definition of $p_-(L)$, $p_+(L)$, $q_-(L)$, and $q_+(L)$ in \eqref{p-}--\eqref{p+} and in \eqref{q-}--\eqref{q+}.
The importance of these parameters stems from the fact that, besides giving the maximal intervals on which either the heat semigroup or its gradient are uniformly bounded, they characterize the maximal open intervals on which off-diagonal estimates of exponential type hold (see \cite{Auscher} and \cite{AuscherMartell:II}). More precisely, for every $m\in \N_0$, there hold
$$
\{(t^2L)^me^{-t^2L}\}_{t>0}\in \mathcal{F}_\infty(L^p-L^q) \quad \textrm{ for all} \quad p_-(L)<p\leq q<p_+(L)$$
and
$$
\{t\nabla_ye^{-t^2L}\}_{t>0}\in \mathcal{F}_\infty(L^p-L^q) \quad \textrm{ for all} \quad q_-(L)<p\leq q<q_+(L).
$$
From these off-diagonal estimates we have, for every $m\in \N_0$,
\begin{align*}
\{(t\sqrt{L}\,)^{2m}e^{-t\sqrt{L}}\}_{t>0}, \
\in \mathcal{F}_{m+\frac{1}{2}}(L^p\rightarrow L^q),
\end{align*}
for all $p_-(L)<p\leq q< p_+(L)$, and
\begin{align*}
 &\{t\nabla_{y}(t^2L)^me^{-t^2L}\}_{t>0}, \ \{t\nabla_{y,t}(t^2L)^me^{-t^2L}\}_{t>0}\in \mathcal{F}_\infty(L^p\rightarrow L^q),
\\[4pt]
&  \{t\nabla_{y}(t\sqrt{L}\,)^{2m}e^{-t\sqrt{L}}\}_{t>0}\in \mathcal{F}_{m+1}(L^p\rightarrow L^q), \,
\{t\nabla_{y,t}(t\sqrt{L}\,)^{2m}e^{-t\sqrt{L}}\}_{t>0}\in \mathcal{F}_{m+\frac{1}{2}}(L^p\rightarrow L^q),
\end{align*}
for all $q_-(L)<p\leq q< q_+(L)$, (see \cite[Section 2]{MartellPrisuelos}).

%%%%%%%%%%%%%%%%%%%%%%%%%%%%%%%%%%%

\subsection{Conical square functions and non-tangential maximal functions}

The operator $-L$ generates a $C^0$-semigroup $\{e^{-t L}\}_{t>0}$ of contractions on $L^2(\mathbb{R}^n)$ which is called the heat semigroup. Using this semigroup and the corresponding Poisson semigroup $\{e^{-t\,\sqrt{L}}\}_{t>0}$, one can define different conical square functions which all have an expression of the form
\begin{align}\label{squarealpha}
Q^{\alpha} f(x)
=\left(\iint_{\Gamma^{\alpha}(x)}|T_t f(y)|^2 \frac{dy \, dt}{t^{n+1}}\right)^{\frac{1}{2}},
\qquad
x\in\R^n,
\end{align}
where $\alpha>0$ and $\Gamma^{\alpha}(x): =\{(y,t)\in \R^{n+1}_+: |x-y|<\alpha t\}$ denotes the cone (of aperture $\alpha$) with vertex at $x\in\R^n$ (see \eqref{cone}).  When $\alpha=1$ we just write $Qf(x)$ and $\Gamma(x)$.
More precisely, we introduce the following conical square functions written in terms of the heat semigroup $\{e^{-t L}\}_{t>0}$ (hence the subscript $\hh$): for every $m\in \mathbb{N}$,
\begin{align} \label{square-H-1}
\mathcal{S}_{m,\hh}f(x) & = \left(\iint_{\Gamma(x)}|(t^2L)^{m} e^{-t^2L}f(y)|^2 \frac{dy \, dt}{t^{n+1}}\right)^{\frac{1}{2}},
\end{align}
and, for every $m\in \mathbb{N}_0:=\mathbb{N}\cup\{0\}$,
\begin{align}
\mathrm{G}_{m,\hh}f(x)& =\left(\iint_{\Gamma(x)}|t\nabla_y(t^2L)^m e^{-t^2L}f(y)|^2 \frac{dy \, dt}{t^{n+1}}\right)^{\frac{1}{2}},
\label{square-H-2}\\[4pt]
\mathcal{G}_{m,\hh}f(x)
&
=
\left(\iint_{\Gamma(x)}|t\nabla_{y,t}(t^2L)^m e^{-t^2L}f(y)|^2 \frac{dy \, dt}{t^{n+1}}\right)^{\frac{1}{2}}.
\label{square-H-3}
\end{align}

In the same manner, let us consider conical square functions associated with the Poisson semigroup $\{e^{-t \sqrt{L}}\}_{t>0}$ (hence the subscript $\pp$):  given $K\in \mathbb{N}$,
\begin{align}
\mathcal{S}_{K,\pp}f(x)
&=
\left(\iint_{\Gamma(x)}|(t\sqrt{L}\,)^{2K} e^{-t\sqrt{L}}f(y)|^2 \frac{dy \, dt}{t^{n+1}}\right)^{\frac{1}{2}},
\label{square-P-1}
\end{align}
and for every $K\in \mathbb{N}_0$,
\begin{align}
\mathrm{G}_{K,\pp}f(x)
&=\left(\iint_{\Gamma(x)}|t\nabla_y (t\sqrt{L}\,)^{2K} e^{-t\sqrt{L}}f(y)|^2 \frac{dy \, dt}{t^{n+1}}\right)^{\frac{1}{2}},
\label{square-P-2}
\\[4pt]
\mathcal{G}_{K,\pp}f(x)
&=
\left(\iint_{\Gamma(x)}|t\nabla_{y,t}(t\sqrt{L}\,)^{2K} e^{-t\sqrt{L}}f(y)|^2 \frac{dy \, dt}{t^{n+1}}\right)^{\frac{1}{2}}.
\label{square-P-3}
\end{align}
Corresponding to the cases $m=0$ or $K=0$ we simply write $\mathrm{G}_{\hh}f:=\mathrm{G}_{0,\hh}f$,
$\mathcal{G}_{\hh}f:=\mathcal{G}_{0,\hh}f$,  $\mathrm{G}_{\pp}f:=\mathrm{G}_{0,\pp}f$, and
$\mathcal{G}_{\pp}f:=\mathcal{G}_{0,\pp}f$. Besides, we set $\Scal_{\hh}f:=\Scal_{1,\hh}f$, $\Scal_{\pp}f:=\Scal_{1,\pp}f$.

We also introduce the non-tangential maximal functions $\Ncal_{\hh}$ and $\Ncal_{\pp}$ associated respectively with the heat and Poisson semigroups:
\begin{equation}\label{nontangential1}
\mathcal{N}_{\hh}f(x)=\sup_{(y,t)\in \Gamma(x)}\left(\int_{B(y,t)}
|e^{-t^2L}f(z)|^2 \frac{dz}{t^n}\right)^{\frac{1}{2}}
\end{equation}
and
\begin{equation}\label{nontangential2}
 \mathcal{N}_{\pp}f(x)=\sup_{(y,t)\in \Gamma(x)}\left(\int_{B(y,t)}
|e^{-t\sqrt{L}}f(z)|^2 \frac{dz}{t^n}\right)^{\frac{1}{2}}.
\end{equation}

\section{Definitions and main results}\label{section-main}

%%%%%%%%%%%%%%%%%%%%%%%%%%%%%%%%%%%%%%%%%%%%%%%%%%%%%%%%%%%%%%%%%%%%%%%%%

%%%%%%%%%%%%%%%%%%%%%%%%%%%%%%%%%%%%%%%%%%%%%%%%%%%%%%%%%%%%%%%%%%%%%%%%%
%%%%%%%%%%%%%%%%%%%%%%%%%%%%%%%%%%%%%%%%%%%%%%%%%%%%%%%%%%%%%%%%%%%%%%%%%

As in the classical setting our weighted Hardy spaces will admit several characterizations using molecules, conical square functions, or non-tangential maximal functions. They will extend the definitions and results obtained in the unweighted case  in \cite{HofmannMayboroda}, to weights $w\in A_{\infty}$ such that $\mathcal{W}_w(p_-(L),p_+(L))\neq \emptyset$.

\subsection{Molecular weighted Hardy spaces}

To set the stage, we take a molecular version of the weighted Hardy space as the original definition, and we shall show that all the other definitions are isomorphic to that one and one another. In order to formalize the notion of molecules and molecular decomposition we introduce the following notation: given a cube $Q\subset \R^n$ we set
\begin{align}\label{100}
C_1(Q):=4Q, \quad  C_i(Q):=2^{i+1}Q\backslash 2^{i}Q,\quad\textrm{for}\quad i\geq 2, \quad
 \textrm{and}\quad \quad Q_i=2^{i+1}Q, \quad\textrm{for}\quad i\geq 1. 
\end{align}
\begin{definition}[Molecules and molecular representation]\label{moleculas}
Let $w\in A_{\infty}$, $p\in \mathcal{W}_w(p_-(L),p_+(L))$,
$\varepsilon >0$, and $ M\in \N$ such that $M>\frac{n}{2}\left(r_w-\frac{1}{p_-(L)}\right)$.
\begin{list}{$(\theenumi)$}{\usecounter{enumi}\leftmargin=1cm \labelwidth=1cm\itemsep=0.2cm\topsep=.0cm \renewcommand{\theenumi}{\alph{enumi}}}

\item \textbf{Molecules:} We say that a function $\mm\in L^p(w)$, (belonging to the range of $L^k$ in $L^p(w)$), is a $\mol$ if, for some cube $Q\subset \mathbb{R}^n$, $\mm$ satisfies
\begin{align*}
\|\mm\|_{mol,w}:=\sum_{i\geq 1}2^{i\varepsilon}w(2^{i+1}Q)^{1-\frac{1}{p}}\sum_{k=0}^{M}\left\|\left((\ell(Q)^2L)^{-k}\mm\right)\chi_{C_i(Q)}\right\|_{L^p(w)}<1.
\end{align*}
Henceforth, we refer to the previous expression as the molecular $w$-norm of $\mm$. Additionally, any cube $Q$ satisfying that expression, is called a cube associated with $\mm$. Besides, note that
 if $\mm$ is a $\mol$, in particular we have
\begin{align}\label{astast}
\left\|\left((\ell(Q)^2L)^{-k}\mm\right)
    \chi_{C_i(Q)}\right\|_{L^p(w)}
    \leq 2^{-i\varepsilon}w(2^{i+1}Q)^{\frac{1}{p}-1}, \quad i=1,2,\dots; \, k=0, 1,\ldots, M.
\end{align}

\item  \textbf{Molecular representation:} For any function $f\in L^p(w)$, we say that the sum $\sum_{i\in \N}\lambda_i\mm_i $ is a $(w,p,\varepsilon,M)-$ representation of $f$, if the following conditions are satisfied:
\begin{list}{$(\theenumi)$}{\usecounter{enumi}\leftmargin=1cm \labelwidth=1cm\itemsep=0.2cm\topsep=.2cm \renewcommand{\theenumi}{\roman{enumi}}}

\item $\displaystyle\{\lambda_i\}_{i\in \N}\in \ell^1$.
\item For every $i\in\N$, $\mm_i$ is a $\mol$.
\item  $f=\sum_{i\in \N}\lambda_i\mm_i$ in $L^p(w)$.
\end{list}

\end{list}

\end{definition}
These objects are a weighted version of the ones defined in \cite{HofmannMayboroda} in the unweighted case.

We finally define the molecular weighted Hardy spaces.
\begin{definition}[Molecular weighted Hardy spaces]
For $w\in A_{\infty}$, $p\in \mathcal{W}_w(p_-(L),p_+(L))$, $\varepsilon>0$, and $M\in \N$ such that $M>\frac{n}{2}\left(r_w-\frac{1}{p_-(L)}\right)$, we
define the molecular weighted Hardy space $H^1_{L,p,\varepsilon,M}(w)$  as the completion of the set
$$
\mathbb{H}_{L,p,\varepsilon,M}^1(w):=\left\{f=\sum_{i=1}^{\infty}\lambda_i\mm_i
: \sum_{i=1}^{\infty}\lambda_i\mm_i \ \textrm{is a} \ \p\textrm{ of }f\right\},
$$
with respect to the norm,
$$
\|f\|_{\mathbb{H}_{L,p,\varepsilon,M}^1(w)}:=\inf\left\{\sum_{i=1}^{\infty}|\lambda_i|:
\sum_{i=1}^{\infty}\lambda_i\mm_i \ \textrm{is a} \ \p\textrm{ of }f\right\}.
$$
\end{definition}

We shall show below that the Hardy spaces $H^1_{L,p,\varepsilon,M}(w)$ do not depend on the choice of the allowable parameters $p$, $\varepsilon$, and $M$. Hence, at this point, it is convenient for us to make a choice of these parameters and define the weighted Hardy space as the one associated with this choice:

\begin{notation}\label{notation:H1w}
From now on, we fix $w\in A_{\infty}$, $p_0\in \mathcal{W}_w(p_-(L),p_+(L))$, $\varepsilon_0>0$, and $M_0\in \N$ such that $M_0>\frac{n}{2}\left(r_w-\frac{1}{p_-(L)}\right)$ and set $H_L^1(w):=H_{L,p_0,\varepsilon_0,M_0}^1(w)$.
\end{notation}

\subsection{Weighted Hardy spaces associated with operators}
We next define other versions of the molecular weighted Hardy spaces defined above using different operators.

\begin{definition}[Weighted Hardy spaces associated with an operator]
Let $w\in A_\infty$ and take $q\in \mathcal{W}_w(p_-(L),p_+(L))$. Given a sublinear operator $\mathcal{T}$  acting on functions of $L^q(w)$ 
we define the 
weighted Hardy space $H^{1}_{\mathcal{T},q}(w)$  as the completion of the set
\begin{align}\label{defw}
\mathbb{H}^{1}_{\mathcal{T},q}(w):=\left\{f\in L^{q}(w):\mathcal{T}f\in L^1(w)\right\},
\end{align}
with respect to the norm
 \begin{align}\label{defw2}
\|f\|_{\mathbb{H}^{1}_{\mathcal{T},q}(w)}:=\|\mathcal{T}f\|_{L^1(w)}.
\end{align}
\end{definition} 

In our results $\mathcal{T}$ will be any of the square functions
presented in \eqref{square-H-1}--\eqref{square-P-3}, or the non-tangential maximal functions defined in \eqref{nontangential1}--\eqref{nontangential2}.

%%%%%%%%%%%%%%%%%%%%%%%%%%%%%%%%%%%%%%%%%%%%%%%%%%%%%%%%%%%%%%%%%%%%
%%%%%%%%%%%%%%%%%%%%%%%%%%%%%%%%%%%%%%%%%%%%%%%%%%%%%%%%%%%%%%%%%%%

\subsection{Main results}\label{section:DR}

\begin{theorem}\label{thm:hardychartz}
Given $w\in A_{\infty}$, let $H^1_L(w)$ be the fixed molecular Hardy space as in Notation \ref{notation:H1w}. For every $p\in\mathcal{W}_w(p_-(L),p_+(L))$, $\varepsilon>0$, and $ M\in \N$ such that $M>\frac{n}{2}\left(r_w-\frac{1}{p_-(L)}\right)$, the following spaces are isomorphic to $H^1_L(w)$ (and therefore one another) with equivalent norms
$$
H^1_{L,p,\varepsilon,M}(w);
\qquad
H^1_{\Scal_{m,\hh},p}(w),\ m\in \N;
\qquad
H^1_{\Grm_{m,\hh},p}(w),\ m\in \N_0;
\quad \textrm{and}
\qquad
H^1_{\Gcal_{m,\hh},p}(w),\ m\in \N_0.
$$
In particular, none of these spaces depend (modulo isomorphisms) on the choice of the allowable parameters $p$, $\varepsilon$, $M$, and $m$. 
\end{theorem}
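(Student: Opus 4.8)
The plan is to prove the chain of isomorphisms in three stages. First, I would show that the molecular spaces $H^1_{L,p,\varepsilon,M}(w)$ do not depend on the parameters, i.e. that for any two admissible triples $(p,\varepsilon,M)$ and $(p_0,\varepsilon_0,M_0)$ the identity map extends to an isomorphism. The key point here is that a single molecule for one set of parameters can be written as an absolutely convergent sum of molecules for another set; this is a weighted adaptation of the corresponding argument in \cite{HofmannMayboroda}, and the main technical input is that, since $p,p_0\in\mathcal{W}_w(p_-(L),p_+(L))=\left(p_-(L)r_w,\tfrac{p_+(L)}{s_w}\right)$, we have good control on $L^p(w)\leftrightarrow L^{p_0}(w)$ comparisons on annuli via \eqref{pesosineq:Ap} and \eqref{pesosineq:RHq}, together with the lower bound on $M$ that makes the tail sums in the molecular norm converge after the change of Lebesgue exponent. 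I would also need to verify that the two completions coincide by a density/Cauchy-sequence argument on $\mathbb{H}^1_{L,p,\varepsilon,M}(w)$.

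Second, I would prove that each operator space $H^1_{\mathcal{T},p}(w)$, with $\mathcal{T}$ one of $\Scal_{m,\hh}$, $\Grm_{m,\hh}$, $\Gcal_{m,\hh}$, contains $H^1_L(w)$ with $\|\mathcal{T}f\|_{L^1(w)}\lesssim \|f\|_{H^1_L(w)}$. By the molecular representation, this reduces to a uniform bound $\|\mathcal{T}\mm\|_{L^1(w)}\lesssim 1$ for every $(w,p,\varepsilon,M)$-molecule $\mm$, which is then summed against $\{\lambda_i\}\in\ell^1$. The standard scheme is to split $\R^n=\bigcup_i C_i(Q)$ and, on each annulus, further decompose the cone integral using the off-diagonal estimates of exponential/polynomial type recalled in Section \ref{section:OD} applied to $(t^2L)^m e^{-t^2L}$, $t\nabla_y(t^2L)^me^{-t^2L}$, etc.; the weighted $L^p(w)\to L^1(w)$ passage uses Hölder with the reverse-Hölder/$A_p$ weight estimates and the doubling property \eqref{doublingcondition}. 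The decay in $i$ coming from the factor $2^{-i\varepsilon}w(2^{i+1}Q)^{1/p-1}$ in \eqref{astast}, combined with the off-diagonal decay and the constraint $M>\frac n2\big(r_w-\tfrac1{p_-(L)}\big)$, is what closes the geometric series; here one crucially uses that the weighted norm inequalities for these conical square functions hold on the range $\mathcal{W}_w(p_-(L),p_+(L))$, established in \cite{MartellPrisuelos}, so that $\mathcal{T}$ is bounded on $L^p(w)$ and the local (``close'') terms are controlled too.

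Third — and this is the part I expect to be the main obstacle — I would prove the reverse containment: if $f\in L^p(w)$ with $\mathcal{T}f\in L^1(w)$, then $f\in H^1_L(w)$ with $\|f\|_{H^1_L(w)}\lesssim\|\mathcal{T}f\|_{L^1(w)}$. This requires producing an actual molecular decomposition of $f$, and the natural route is a weighted tent-space / Calderón-reproducing-formula argument: using a quadratic identity $f=c_{m,L}\iint (t^2L)^{m}e^{-t^2L}\big(\cdots\big)\,\tfrac{dt}{t}$ to write $f$ as a superposition of pieces supported over Whitney regions of a tent-space atomic decomposition of $\mathbf{1}_{\{\mathcal{T}f>\lambda\}}$-type level sets, then verifying that the resulting building blocks are (fixed multiples of) $(w,p,\varepsilon,M)$-molecules with $\ell^1$-summable coefficients controlled by $\|\mathcal{T}f\|_{L^1(w)}$. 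The delicate points are: establishing the weighted tent-space atomic decomposition and the $L^p(w)$-convergence of the reproducing formula; checking the molecular decay and the $L^p(w)$-integrability (range of $k$ up to $M$) of each piece, which is exactly where the hypothesis $M>\frac n2\big(r_w-\tfrac1{p_-(L)}\big)$ and the membership $p\in\mathcal{W}_w(p_-(L),p_+(L))$ are used to absorb the weight; and handling the $\Grm$ and $\Gcal$ cases, where one must pass from gradient square-function control back to the semigroup via the off-diagonal and interpolation arguments of \cite{MartellPrisuelos}. Once both containments are in place for one choice of parameters, combining with Stage 1 gives all the stated isomorphisms and the parameter-independence, including independence of $m$.
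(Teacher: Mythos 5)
Your Stages 2 and 3 coincide with the paper's actual route: uniform $L^1(w)$ bounds for the square functions on molecules (Propositions \ref{prop:acotacion-T} and \ref{prop:contro-mol-SF}), a Calder\'on reproducing formula in $L^p(w)$ (Remark \ref{remark:calderonreproducing}) combined with a Whitney/tent--space decomposition of the level sets of $\Scal_{m,\hh}f$ to produce a $\p$ with $\sum|\lambda_l^j|\lesssim\|\Scal_{m,\hh}f\|_{L^1(w)}$, and the reduction of the $\Grm_{m,\hh}$, $\Gcal_{m,\hh}$ cases to $\Scal_{m+1,\hh}$ via Lemma \ref{lema:comparacion,SH-GH,SK-GK} and the pointwise bound $\Grm_{m,\hh}f\le\Gcal_{m,\hh}f$. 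The genuine problem is your Stage 1. You propose to prove parameter independence of the molecular spaces directly, by writing a single $(w,p,\varepsilon,M)$-molecule as an absolutely convergent sum of molecules for another admissible triple, in particular for another exponent $q$. This cannot work at the level of the sets $\mathbb{H}^1_{L,\cdot}(w)$: a $(w,p,\varepsilon,M)$-molecule is only controlled in local $L^p(w)$ norms, and for $q>p$ H\"older goes the wrong way, so such a molecule need not belong to $L^q(w)$ at all; since membership in $\mathbb{H}^1_{L,q,\varepsilon,M}(w)$ and the very notion of a $(w,q,\varepsilon,M)$-representation require convergence in $L^q(w)$, a generic $p$-molecule is simply not an element of the $q$-space, and no rearrangement into $q$-molecules can fix this. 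The comparisons \eqref{pesosineq:Ap}--\eqref{pesosineq:RHq} you invoke control measures of sets, not a passage from local $L^p(w)$ to local $L^q(w)$ bounds for the function itself.

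The paper avoids this entirely. Independence of $\varepsilon$, $M$ (and $m$) for fixed $p$ is a free consequence of the set-level identity $\mathbb{H}^1_{L,p,\varepsilon,M}(w)=\mathbb{H}^1_{\Scal_{m,\hh},p}(w)$ with equivalent norms (Proposition \ref{lema:SH-1}, part (a)), because the right-hand side does not involve $\varepsilon,M$; so your Stage 1 is not needed there. The change of $p$ is then handled at the level of completions (Proposition \ref{lema:SH-1}, part (b)): for $f\in\mathbb{H}^1_{\Scal_{m,\hh},p}(w)$ one does not touch arbitrary molecules, but uses the specific molecules $\mm_l^j$ produced by the reproducing-formula construction, which are built from the semigroup acting on $f$ and can be re-estimated verbatim with $L^q(w)$ in place of $L^p(w)$, showing they are simultaneously multiples of $(w,q,\varepsilon,M)$-molecules; the finite partial sums $f_N$ then lie in $L^q(w)$, hence in $\mathbb{H}^1_{L,q,\varepsilon,M}(w)$, and $\|f-f_N\|_{\mathbb{H}^1_{\Scal_{m,\hh},p}(w)}\to 0$ because the tail coefficients are $\ell^1$-summable, which yields the isomorphism $H^1_{\Scal_{m,\hh},p}(w)\approx H^1_{\Scal_{m,\hh},q}(w)$. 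If you replace your Stage 1 by this density argument (or simply drop it and prove Stages 2--3 for all admissible parameters, adding the part-(b) approximation step for the change of $p$), your outline matches the proof of Theorem \ref{thm:hardychartz}.
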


\begin{theorem}\label{thm:hardychartzPoisson}
Given $w\in A_{\infty}$, let $H^1_L(w)$ be the fixed molecular Hardy space as in Notation \ref{notation:H1w}. For every $p\in\mathcal{W}_w(p_-(L),p_+(L))$, the following spaces are isomorphic to $H^1_L(w)$ (and therefore one another) with equivalent norms
$$
H^1_{\Scal_{K,\pp},p}(w),\ K\in \N;
\qquad
H^1_{\Grm_{K,\pp},p}(w),\ K\in \N_0;
\qquad\textrm{and}\qquad
H^1_{\Gcal_{K,\pp},p}(w),\ K\in \N_0.
$$
In particular, none of these spaces depend (modulo isomorphisms) on the choice of $p$, and $K$.
\end{theorem}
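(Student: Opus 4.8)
The plan is to deduce Theorem~\ref{thm:hardychartzPoisson} from Theorem~\ref{thm:hardychartz} together with the subordination formula relating the Poisson semigroup to the heat semigroup, and from the weighted norm inequalities for the Poisson conical square functions obtained in \cite{MartellPrisuelos}. The strategy proceeds in three stages: (1) show that each of the Poisson-type spaces $H^1_{\mathcal{T},p}(w)$ with $\mathcal{T}\in\{\Scal_{K,\pp},\Grm_{K,\pp},\Gcal_{K,\pp}\}$ contains $H^1_L(w)$, i.e.\ that these square functions are bounded $H^1_L(w)\to L^1(w)$; (2) show the reverse containment, i.e.\ that a function with $\mathcal{T}f\in L^1(w)$ admits a molecular $(w,p,\varepsilon,M)$-representation controlled by $\|\mathcal{T}f\|_{L^1(w)}$; (3) conclude that all the norms are equivalent and hence all spaces are isomorphic to $H^1_L(w)$, with independence of $p$ and $K$ following automatically.

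For stage (1), I would first establish the result on the dense subclass of finite molecular sums: it suffices to show that for a single $\mol$ $\mm$ associated with a cube $Q$, one has $\|\mathcal{T}\mm\|_{L^1(w)}\le C$ with $C$ independent of $\mm$ and $Q$; summing over an $\ell^1$-representation and using that the molecular Hardy norm is an infimum then gives boundedness on $\mathbb{H}^1_{L,p,\varepsilon,M}(w)$, which extends to the completion $H^1_L(w)$. The estimate for $\mathcal{T}\mm$ is done by the standard annular decomposition: split $\mm=\sum_i \mm\chi_{C_i(Q)}$, and for each piece split the cone integral into a local part over $2^{i+1}Q$ (or a fixed dilate) and a far part; on the local part one uses the weighted $L^p(w)\to L^p(w)$ boundedness of $\mathcal{T}$ (valid since $p\in\mathcal{W}_w(p_-(L),p_+(L))$, by the results of \cite{MartellPrisuelos}) together with H\"older's inequality and the molecular size bound \eqref{astast}; on the far part one uses the polynomial off-diagonal decay of the Poisson operators $\{t\nabla_{y,t}(t\sqrt L)^{2K}e^{-t\sqrt L}\}$ recorded in Section~\ref{section:OD}, crucially invoking the cancellation encoded by the $(\ell(Q)^2L)^{-k}$ terms and the condition $M>\frac n2(r_w-\frac1{p_-(L)})$ to make the resulting geometric series in $i$ converge against the weight doubling factors $w(2^{i+1}Q)^{1-1/p}$ via \eqref{doublingcondition} and \eqref{pesosineq:RHq}.

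For stage (2), the cleanest route is to avoid building the Poisson machinery from scratch and instead compare the Poisson square functions with the heat square functions, for which Theorem~\ref{thm:hardychartz} already gives the molecular characterization. Concretely, one shows a pointwise or $L^1(w)$ domination of the form $\|\Scal_{m,\hh}f\|_{L^1(w)}\lesssim \|\mathcal{T}f\|_{L^1(w)}$ for $\mathcal{T}$ a Poisson square function (and conversely), using the subordination formula $e^{-t\sqrt L}=c\int_0^\infty e^{-u}e^{-\frac{t^2}{4u}L}\frac{du}{\sqrt u}$, Minkowski's integral inequality to move the square function inside the $u$-integral, a change of variables in $t$, and the uniform weighted bounds from \cite{MartellPrisuelos}; this is exactly the kind of comparison the authors advertise in the abstract. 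Given such a two-sided comparison in $L^1(w)$, a function $f$ with $\Gcal_{K,\pp}f\in L^1(w)$ also has (some) heat square function in $L^1(w)$, hence lies in $H^1_L(w)$ with comparable norm by Theorem~\ref{thm:hardychartz}; combined with stage (1) this yields the isomorphism.

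Finally, stage (3) is formal: the two inequalities $\|f\|_{H^1_L(w)}\lesssim\|\mathcal{T}f\|_{L^1(w)}\lesssim\|f\|_{H^1_L(w)}$, valid on a common dense class and then by density/completion on the whole spaces, identify each $H^1_{\mathcal{T},p}(w)$ with $H^1_L(w)$; since the right-hand side does not involve $K$ (the molecular norm doesn't), independence of $K$ and $p$ is immediate. I expect the main obstacle to be stage (1), specifically the far-away estimate for the gradient Poisson square function $\Gcal_{K,\pp}$: the Poisson operators only enjoy \emph{polynomial} off-diagonal decay of finite order ($m+\tfrac12$ for the full gradient), so one must check carefully that this order, once boosted by the $M$ available cancellation orders, beats the growth $w(2^{i+1}Q)^{1-1/p}\sim 2^{i n r_w(1-1/p)}$ coming from the worst-case weight doubling—this is precisely where the hypothesis $M>\frac n2(r_w-\frac1{p_-(L)})$ and the sharp interval $\mathcal{W}_w(p_-(L),p_+(L))$ must be used to their full strength, and getting the bookkeeping of exponents right (rather than any single inequality) is the delicate point.
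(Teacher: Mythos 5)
Your stages (1) and (3) are fine and match the paper: the uniform bound $\|\mathcal{T}\mm\|_{L^1(w)}\le C$ for Poisson square functions applied to molecules is exactly Proposition \ref{prop:contro-mol-SF} (proved via the abstract Proposition \ref{prop:acotacion-T}), and the $\Grm_{K,\pp}$/$\Gcal_{K,\pp}$ spaces are indeed handled by the comparison $\|\Scal_{K,\pp}f\|\lesssim\|\Grm_{K-1,\pp}f\|\le\|\Gcal_{K-1,\pp}f\|$ of Lemma \ref{lema:comparacion,SH-GH,SK-GK}. The genuine gap is your stage (2). The subordination formula $e^{-t\sqrt L}=c\int_0^\infty e^{-u}e^{-\frac{t^2}{4u}L}\frac{du}{\sqrt u}$ together with Minkowski's inequality only yields control of Poisson objects by heat objects (this is how \eqref{controlverticalSK} is proved in the paper); it cannot produce the inequality you actually need, namely $\|\Scal_{m,\hh}f\|_{L^1(w)}\lesssim\|\Scal_{K,\pp}f\|_{L^1(w)}$, because there is no analogous positive integral representation of $e^{-t^2L}$ in terms of the Poisson semigroup, and at the $L^1(w)$ level one cannot invoke the weighted $L^p(w)$ boundedness results (which hold only for $p$ in $\mathcal{W}_w(p_-(L),p_+(L))$, an interval not containing $1$ in general). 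That missing inequality is essentially the whole content of the hard containment, so your argument is circular at this point: the two-sided $L^1(w)$ comparability of heat and Poisson square functions is a consequence of the theorem, not an available tool for proving it. (Also, the comparison advertised in the abstract is between non-tangential maximal functions and square functions, used for Theorem \ref{thm:hardychartzNontangential}, not a heat--Poisson comparison in $L^1(w)$.)

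What the paper does instead for the reverse containment is to run the tent-space/molecular construction directly on the Poisson square function: starting from the Calder\'on reproducing formula $f=C\int_0^\infty\big((t^2L)^{M+K}e^{-t\sqrt L}\big)^2 f\,\frac{dt}{t}$ (justified in $L^p(w)$ via \eqref{controlverticalSK} and duality as in Remark \ref{remark:calderonreproducing}), one decomposes over the regions $T_l^j$ built from Whitney cubes of the level sets $\{\Scal_{K,\pp}f>2^l\}$, sets $\lambda_l^j=2^lw(Q_l^j)$, and verifies directly — using the polynomial off-diagonal estimates $\{(t\sqrt L)^{2\widetilde K}e^{-t\sqrt L}\}_{t>0}\in\mathcal F_{\widetilde K+\frac12}(L^2\to L^{q_0})$ — that the resulting pieces are $(w,p,\varepsilon_0,M)$-molecules with $\varepsilon_0=2M+2K+\frac n2-nr_w$, and that $\sum|\lambda_l^j|\lesssim\|\Scal_{K,\pp}f\|_{L^1(w)}$. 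If you want to keep your outline, stage (2) must be replaced by such a direct molecular decomposition adapted to the Poisson semigroup (or by some other argument giving heat-controlled-by-Poisson at the Hardy-space level); as written, the step fails.
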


\begin{theorem}\label{thm:hardychartzNontangential}
Given $w\in A_{\infty}$, let $H^1_L(w)$ be the fixed molecular Hardy space as in Notation \ref{notation:H1w}. For every $p\in\mathcal{W}_w(p_-(L),p_+(L))$, the following spaces are isomorphic to $H^1_L(w)$ (and therefore one another) with equivalent norms
$$
H^1_{\Ncal_{\hh},p}(w)
\qquad\textrm{and}\qquad
H^1_{\Ncal_{\pp},p}(w).
$$
In particular, none of these spaces depend (modulo isomorphisms) on the choice of $p$.
\end{theorem}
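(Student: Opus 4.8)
The plan is to prove Theorem~\ref{thm:hardychartzNontangential} by comparing the non-tangential maximal functions with the conical square functions already handled in Theorems~\ref{thm:hardychartz} and~\ref{thm:hardychartzPoisson}, exactly as in the unweighted scheme of~\cite{HofmannMayboroda}. Fix $w\in A_\infty$ and $p\in\mathcal W_w(p_-(L),p_+(L))$. The desired isomorphisms will follow from two ingredients: first, that the relevant operators are bounded on $L^p(w)$ (so that the spaces $\mathbb H^1_{\Ncal_\hh,p}(w)$ and $\mathbb H^1_{\Ncal_\pp,p}(w)$ are well defined and the molecular series converge appropriately), and second, a two-sided $L^1(w)$ comparison between $\Ncal_\hh f$, $\Ncal_\pp f$, and one of the conical square functions, say $\Scal_\hh f$, valid at least on a dense class. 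The weighted norm inequalities for $\Ncal_\hh$ and $\Ncal_\pp$ on $L^p(w)$ for $p\in\mathcal W_w(p_-(L),p_+(L))$ are precisely what is established in Section~\ref{section:NT} (the analogue, for the maximal functions, of the square function bounds proved in~\cite{MartellPrisuelos}); I will invoke those.

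First I would show the inclusion $H^1_L(w)\hookrightarrow H^1_{\Ncal_\hh,p}(w)$ (and similarly for $\Ncal_\pp$): given $f=\sum_i\lambda_i\mm_i$ a $(w,p,\varepsilon,M)$-representation, use the $L^p(w)$-boundedness of $\Ncal_\hh$ together with a molecular estimate showing $\|\Ncal_\hh\mm\|_{L^1(w)}\le C$ uniformly over $(w,p,\varepsilon,M)$-molecules $\mm$. This molecular bound is obtained by the standard annular decomposition: split $\Ncal_\hh\mm$ over $C_i(Q)$, on the ``local'' part $4Q\cup 8Q$ use Hölder and the $L^p(w)$-bound of $\Ncal_\hh$ against $\sum_k\|((\ell(Q)^2L)^{-k}\mm)\chi_{C_j(Q)}\|_{L^p(w)}$, and on the ``far'' annuli exploit the off-diagonal decay of $\{e^{-t^2L}\}$ (recalled in Section~\ref{section:OD}), writing $\mm=(\ell(Q)^2L)^M(\ell(Q)^2L)^{-M}\mm$ to gain the factor $M$, which is where the hypothesis $M>\tfrac n2(r_w-\tfrac1{p_-(L)})$ enters via~\eqref{pesosineq:Ap}--\eqref{pesosineq:RHq}. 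Summing the geometric series in $i$ and using $\sum_i|\lambda_i|<\infty$ gives $\|f\|_{\mathbb H^1_{\Ncal_\hh,p}(w)}\lesssim\|f\|_{\mathbb H^1_{L,p,\varepsilon,M}(w)}$; passing to completions yields the bounded inclusion. The same argument with Poisson off-diagonal estimates of polynomial type handles $\Ncal_\pp$.

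For the converse direction I would argue $H^1_{\Ncal_\hh,p}(w)\hookrightarrow H^1_L(w)$ by going through a conical square function already known (by Theorem~\ref{thm:hardychartz}) to characterize $H^1_L(w)$. The key is a pointwise-in-$L^1(w)$ domination: on one hand $\|\Scal_\hh f\|_{L^1(w)}\lesssim\|\Ncal_\hh f\|_{L^1(w)}$, obtained via a good-$\lambda$ inequality or a tent-space/Fefferman--Stein argument comparing the area integral with the non-tangential maximal function (this uses Caccioppoli-type estimates and the off-diagonal bounds for $t\nabla_y e^{-t^2L}$, valid since $p\in\mathcal W_w(p_-(L),p_+(L))\subset(q_-(L),q_+(L))$-compatible range); on the other hand one needs the reverse $\|\Ncal_\hh f\|_{L^1(w)}\lesssim\|\Scal_\hh f\|_{L^1(w)}$ (plus a harmless term), which is the subtler estimate and again rests on the weighted bounds of Section~\ref{section:NT} together with a $w$-adapted Fefferman--Stein vector-valued / tent-space duality. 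Once $\|\Ncal_\hh f\|_{L^1(w)}\approx\|\Scal_\hh f\|_{L^1(w)}$ on a dense subspace, Theorem~\ref{thm:hardychartz} identifies $H^1_{\Scal_\hh,p}(w)$ with $H^1_L(w)$ and the chain of isomorphisms closes. To pass from $\Ncal_\hh$ to $\Ncal_\pp$ I would use the subordination formula $e^{-t\sqrt L}=\tfrac1{\sqrt\pi}\int_0^\infty e^{-u}e^{-\frac{t^2}{4u}L}\tfrac{du}{\sqrt u}$ to dominate $\Ncal_\pp f$ by (a dilated average of) $\Ncal_\hh f$ in $L^1(w)$, and conversely compare $\Scal_{K,\pp}$ with $\Scal_\hh$ as in Theorem~\ref{thm:hardychartzPoisson}, so that $\Ncal_\pp$ is squeezed between quantities equivalent to $\|\cdot\|_{H^1_L(w)}$.

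The main obstacle I anticipate is the reverse comparison $\|\Ncal_\hh f\|_{L^1(w)}\lesssim\|\Scal_\hh f\|_{L^1(w)}$ (and the analogous Poisson statement): unlike the square-function side, the non-tangential maximal function does not linearize well, and the classical unweighted proof in~\cite{HofmannMayboroda} relies on $L^2$ off-diagonal machinery and a clever use of the $N<S$ inequality that must now be run in the weighted setting with the maximal function taken in the $L^2$-averaged form~\eqref{nontangential1}. The delicate points are (i) ensuring all manipulations stay within the admissible exponent window $\mathcal W_w(p_-(L),p_+(L))$ so that the semigroup and its gradient are bounded on $L^p(w)$, and (ii) controlling the ``error'' terms produced when replacing $f$ by $(I-e^{-t^2L})^N f$ or by a suitable resolvent regularization, for which the molecular tail estimates of part~(a) of Definition~\ref{moleculas} and the doubling bound~\eqref{doublingcondition} are exactly what is needed. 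Everything else is a routine, if lengthy, adaptation of the tent-space formalism to the measure $w\,dx$.
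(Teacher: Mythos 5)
Your overall architecture is essentially the paper's: the inclusion $H^1_L(w)\hookrightarrow H^1_{\Ncal_{\hh},p}(w)$ (and likewise for $\Ncal_{\pp}$) comes from the weighted $L^p$ boundedness of the maximal functions (Proposition \ref{prop:acotacion-N}) together with a uniform $L^1(w)$ bound on $(w,p,\varepsilon,M)$-molecules (Proposition \ref{prop:contro-mol-NT}), and the converse inclusion is routed through a conical square function and the already established characterizations (Propositions \ref{lema:SH-1} and \ref{lemma:SKP}). The one substantive problem is that you declare the reverse comparison $\|\Ncal_{\hh}f\|_{L^1(w)}\lesssim\|\Scal_{\hh}f\|_{L^1(w)}$ (and its Poisson analogue) to be needed, flag it as the main obstacle, and offer only a vague appeal to a weighted Fefferman--Stein/tent-space duality. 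That inequality is never proved in the paper and is not needed: the direction ``$\Ncal$ controlled by the Hardy norm'' is precisely what your molecular estimate already delivers, while the backward inclusion only requires the one-sided good-$\lambda$ bounds $\|\Grm_{\hh}f\|_{L^1(w)}\lesssim\|\Ncal_{\hh}f\|_{L^1(w)}$ and $\|\Gcal_{\pp}f\|_{L^1(w)}\lesssim\|\Ncal_{\pp}f\|_{L^1(w)}$ of Proposition \ref{prop:comparacion-N-G}, combined with $\|\Scal_{1,\hh}f\|_{L^1(w)}\lesssim\|\Grm_{0,\hh}f\|_{L^1(w)}$ and $\|\Scal_{K,\pp}f\|_{L^1(w)}\lesssim\|\Grm_{K-1,\pp}f\|_{L^1(w)}$ from Lemma \ref{lema:comparacion,SH-GH,SK-GK}. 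As written, your argument leans on an unproved and genuinely delicate estimate for arbitrary $f$; deleting it and closing the loop as above removes the only real gap.

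Two smaller corrections to your sketch. For the Poisson case, subordination does not dominate $\Ncal_{\pp}f$ by a dilated average of $\Ncal_{\hh}f$ alone: splitting the $u$-integral produces, besides $\Ncal_{\hh}f$, terms controlled by the vertical square function $g_{\hh}f$ (through $\mathcal{M}_{p_0}$) and by $\Scal_{\hh}^{4\sqrt{u}}f$ with growing aperture, which are then handled by change of angles (\cite[Proposition 3.29]{MartellPrisuelos}) and the weighted bounds for $g_{\hh}$ and $\Scal_{\hh}$; the same extra terms appear in the molecular estimate for $\Ncal_{\pp}$ in Proposition \ref{prop:contro-mol-NT}, so they cannot be omitted. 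Also, the weighted good-$\lambda$ proofs of $\Grm_{\hh}\lesssim\Ncal_{\hh}$ and $\Gcal_{\pp}\lesssim\Ncal_{\pp}$ rest on ellipticity, integration by parts, (parabolic) Caccioppoli inequalities and the change-of-aperture Lemma \ref{lema:changeofangelN} for $\Ncal$, rather than on off-diagonal estimates for $t\nabla_y e^{-t^2L}$; crucially they hold for all $0<p<\infty$ and all $w\in A_\infty$, which is what allows taking $p=1$ there.
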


\section{Auxiliary results}\label{section:aux}
In this section we introduce some notation and establish some auxiliary results that will be very useful in order to simplify the proofs of Theorems \ref{thm:hardychartz}, \ref{thm:hardychartzPoisson}, and \ref{thm:hardychartzNontangential}.

Let $\R_+^{n+1}$ be the upper-half space, that is, the set of points $(y,t)\in \R^n\times \R$ with $t>0$. Given $\alpha>0$ and $x\in \mathbb{R}^n$ we define the cone of aperture  $\alpha$ with vertex at $x$ by
\begin{align}\label{cone}
\Gamma^{\alpha}(x):=\{(y,t)\in \R_+^{n+1} : |x-y|<\alpha t\}.
\end{align}
When $\alpha=1$ we simply write $\Gamma(x)$. For a closed set $E$ in $\mathbb{R}^n$, set
\begin{align}\label{ralphacones}
\mathcal{R}^{\alpha}(E):=\bigcup_{x\in E}\Gamma^{\alpha}(x).
\end{align}
When $\alpha=1$ we simplify the notation by writing $\mathcal{R}(E)$ instead of $\mathcal{R}^1(E)$.

Besides, for a function $F$ defined in $\R^{n+1}_+$ and for every $x\in \R^n$,   let us consider
\begin{align}\label{normacono}
|\|F\||_{\Gamma(x)}:=\left(\iint_{\Gamma(x)}|F(y,t)|^2\frac{dy\,dt}{t^{n+1}}\right)^{\frac{1}{2}}.
\end{align}
Using  ideas from \cite[Lemma 5.4]{HofmannMayboroda}, we obtain the  following result:
\begin{lemma}\label{lema:comparacion,SH-GH,SK-GK}
For all $w\in A_{\infty}$ and  $f\in L^{2}(\R^n)$.
There hold
\begin{list}{$(\theenumi)$}{\usecounter{enumi}\leftmargin=1cm \labelwidth=1cm\itemsep=0.2cm\topsep=.2cm \renewcommand{\theenumi}{\alph{enumi}}}

\item $\|\Scal_{m,\hh}f\|_{L^{p}(w)}\lesssim \|\Grm_{m-1,\hh}f\|_{L^{p}(w)}$,
for all $m\in \N$ and $0<p<\infty$,

\item  $\|\Scal_{K,\pp}f\|_{L^{p}(w)}\lesssim \|\Grm_{K-1,\pp}f\|_{L^{p}(w)}$, for all $K\in \N$ and $0<p<\infty$.
\end{list}
Furthermore, one can see that  ($a$) and ($b$) hold for all functions $f\in L^q(w)$ with $w\in A_\infty$ and $q\in \mathcal{W}_{w}(p_-(L), p_+(L))$. 

\end{lemma}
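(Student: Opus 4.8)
The plan is to reduce the inequality $\|\Scal_{m,\hh}f\|_{L^p(w)} \lesssim \|\Grm_{m-1,\hh}f\|_{L^p(w)}$ to a pointwise comparison between the conical functionals and then invoke a change-of-aperture argument. The starting observation is that $\Scal_{m,\hh}f$ and $\Grm_{m-1,\hh}f$ are both of the form $|\|F\||_{\Gamma(x)}$ with $F(y,t) = (t^2L)^m e^{-t^2L}f(y)$ in the first case and $F(y,t) = t\nabla_y (t^2L)^{m-1}e^{-t^2L}f(y)$ in the second. Writing $u(y,t) = (t^2L)^{m-1}e^{-t^2L}f(y)$, one has the operator identity $(t^2L)^m e^{-t^2L}f = t^2 L\, u = -t^2 \div(A\nabla u)$ morally, but more usefully one exploits that $t\partial_t$ applied to $(t^2L)^{m-1}e^{-t^2L}$ produces (up to constants and lower-order analogues of the same type) the factor $(t^2L)^m e^{-t^2L}$. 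So the genuine point is a Caccioppoli-type inequality on Whitney boxes: for a null-solution-like object $u$ of the parabolic-elliptic equation, the $L^2$ average of $t\,|\,$(the relevant time-derivative or $L$-action)$\,|$ over a Whitney box is controlled by the $L^2$ average of $|\nabla_y u|$ over a slightly dilated box. Following \cite[Lemma 5.4]{HofmannMayboroda}, this is exactly the mechanism: Caccioppoli in the $t$-variable (or in $x$) transfers an extra order of $L$ into a gradient at the cost of enlarging the box.

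Concretely, the key steps I would carry out are: (1) For each $x$, tile the cone $\Gamma(x)$ by Whitney boxes $W$ (cubes in $\R^{n+1}_+$ of side comparable to their distance to the boundary), so that $\iint_{\Gamma(x)} |F(y,t)|^2 \frac{dy\,dt}{t^{n+1}} \approx \sum_W t_W^{-n-1}\iint_W |F|^2$. (2) On each box $W$ apply the Caccioppoli inequality: since $(t^2 L)^m e^{-t^2L}f = c\,(t\partial_t + \text{similar})\big[(t^2L)^{m-1}e^{-t^2L}f\big]$ and the semigroup is generated by the divergence-form operator, one gets $\iint_W |(t^2L)^m e^{-t^2L}f|^2 \frac{dy\,dt}{t^{n+1}} \lesssim \iint_{W^*} |t\nabla_y (t^2L)^{m-1}e^{-t^2L}f|^2 \frac{dy\,dt}{t^{n+1}}$ where $W^*$ is a fixed dilate of $W$; here one must be a little careful because the $L$-action also spawns terms of the form $t\nabla_{y,t}$ with a $t$-derivative, but those are handled by the same Caccioppoli argument iterated, or absorbed since $\Grm$ with the $y$-gradient dominates after another Caccioppoli step in $t$. (3) Sum over $W$: since the dilated boxes $W^*$ have bounded overlap and all lie in a fixed enlarged cone $\Gamma^{\alpha}(x)$ with $\alpha$ a universal constant ($\alpha = 2$, say), this yields the pointwise bound $\Scal_{m,\hh}f(x) \lesssim |\|t\nabla_y (t^2L)^{m-1}e^{-t^2L}f\||_{\Gamma^{\alpha}(x)}$. (4) Finally invoke the weighted change-of-aperture inequality for conical square functions, valid for all $w\in A_\infty$ and $0<p<\infty$ (this is standard and presumably recorded in Part I \cite{MartellPrisuelos}), to replace $\Gamma^\alpha(x)$ by $\Gamma(x)$: $\|\,|\|G\||_{\Gamma^\alpha(\cdot)}\|_{L^p(w)} \lesssim \|\,|\|G\||_{\Gamma(\cdot)}\|_{L^p(w)}$. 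Combining (3) and (4) gives part (a); part (b) is identical, replacing the heat semigroup and its exponential off-diagonal estimates by the Poisson semigroup $e^{-t\sqrt L}$ and its polynomial off-diagonal estimates, noting $(t\sqrt L)^{2K}e^{-t\sqrt L}$ relates to $t\nabla_y (t\sqrt L)^{2(K-1)}e^{-t\sqrt L}$ in the same Caccioppoli fashion (the subordination formula or direct resolvent manipulation gives that $e^{-t\sqrt L}$ solves an elliptic equation in $(y,t)$, $\partial_t^2 + \text{div}_y A\nabla_y$-type, so Caccioppoli applies verbatim).

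The main obstacle I anticipate is step (2): making the Caccioppoli inequality precise for the higher-order objects $(t^2L)^m e^{-t^2L}f$ when $m\geq 1$, because the naive Caccioppoli argument is cleanest for genuine null solutions ($L u = 0$ in a cylinder), whereas here $u(\cdot,t) = (t^2L)^{m-1}e^{-t^2L}f$ is not $t$-independent and not $L$-harmonic. The resolution is to work with the function $v(y,t) = (t^2 L)^{m-1}e^{-t^2L}f(y)$ viewed as a solution of the \emph{heat-type} equation $(\partial_s + L)$-evolution (after the change of variable $s = t^2$), for which there is a parabolic Caccioppoli inequality: the $L^2$ average of $s\,|L v|$ (equivalently of $|\partial_s v|$, equivalently of the next-order heat operator applied to $f$) over a parabolic Whitney box is bounded by the $L^2$ average of $|\nabla_y v|^2$ plus a harmless lower-order term over a dilated box, where the lower-order term is itself of the form treated and is absorbed by iterating or by the already-available $L^p(w)$ comparisons among the $\Scal_{m,\hh}$ for different $m$. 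The bookkeeping of these lower-order "error" terms — showing each is again a conical square function of the admissible type over a slightly larger cone, so that finite iteration plus change of aperture closes the estimate — is the part that requires genuine care, but conceptually it is exactly the scheme of \cite[Lemma 5.4]{HofmannMayboroda} adapted to the weighted $L^p$ setting, and the weight enters only through the change-of-aperture step, which holds for all $w\in A_\infty$. The extension from $f\in L^2(\R^n)$ to $f\in L^q(w)$ with $q\in\mathcal W_w(p_-(L),p_+(L))$ in the final sentence follows by density together with the fact, established in Part I, that all these square functions are bounded on $L^q(w)$ for $q$ in that range, so both sides are finite and continuous in $f$ and the inequality passes to the limit.
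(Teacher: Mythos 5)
There is a genuine gap, and it sits exactly at your step (2). The inequality you need on each Whitney box, namely $\iint_W |(t^2L)^m e^{-t^2L}f|^2\,\frac{dy\,dt}{t^{n+1}} \lesssim \iint_{W^*} |t\nabla_y (t^2L)^{m-1}e^{-t^2L}f|^2\,\frac{dy\,dt}{t^{n+1}}$, is \emph{not} a Caccioppoli inequality: Caccioppoli (including the parabolic version quoted at the end of the paper) controls a gradient by the function, whereas you need the reverse, a second--order bound $\|t\partial_t u\|_{L^2(W)}\lesssim \|t\nabla_y u\|_{L^2(W^*)}$ for $u=e^{-t^2L}f$. The usual proof of such an estimate (test the equation with $\partial_s u\,\eta^2$ after $s=t^2$) produces the term $\iint A\nabla u\cdot\overline{\nabla\partial_s u}\,\eta^2$, which can be rewritten as an exact time derivative of $A\nabla u\cdot\overline{\nabla u}$ only when $A$ is Hermitian; for the general complex, non--self--adjoint coefficients allowed here the argument does not close, and you offer no substitute. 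More to the point, you never use the two ingredients that actually make the lemma work in the paper: the conservation property $t^2Le^{-t^2L}1=0$ and the Poincar\'e inequality. The paper writes $(t^2L)^me^{-t^2L}f=t^2Le^{-\frac{t^2}{2}L}\widetilde f$ with $\widetilde f=(t^2L)^{m-1}e^{-\frac{t^2}{2}L}f$, uses conservation to replace $\widetilde f$ by $\widetilde f-(\widetilde f)_{4B}$, applies $L^2$--$L^2$ off--diagonal estimates to localize, and then Poincar\'e to convert the oscillation into $t\nabla_y\widetilde f$ on dilated balls; without subtracting the average, the local $L^2$ bound is zeroth order in $\widetilde f$ and cannot be dominated by a gradient (this also disposes of your vague plan for the ``lower-order terms'' when $m\ge 2$, which as written is circular). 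Your step (4) (weighted change of aperture, \cite[Proposition 3.2/3.29]{MartellPrisuelos}) is correct and is indeed how the paper sums the resulting series $\sum_j e^{-c4^j}\Grm_{m-1,\hh}^{2^{j+3}}f$.

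Part (b) is also not ``identical with polynomial off-diagonal estimates.'' The elliptic integration by parts for the Poisson extension does not give a pointwise domination of $\Scal_{K,\pp}$ by $\Grm_{K-1,\pp}$ over an enlarged cone; what it gives (and what the paper uses, following the argument of \cite[Lemma 5.4]{HofmannMayboroda}) is the geometric--mean bound $\Scal_{K,\pp}^{2^k}f\le C_1\big(\Grm_{K-1,\pp}^{2^{k+k_0}}f\big)^{1/2}\big(\Scal_{K,\pp}^{2^{k+k_0}}f\big)^{1/2}$, in which the square function itself reappears on the right at a larger aperture. Closing this requires the additional absorption argument: form $\Scal^*=\sum_k R^{-k}\Scal_{K,\pp}^{2^k}f$ and $\Grm^*=\sum_k R^{-k}\Grm_{K-1,\pp}^{2^k}f$, apply Young's inequality, justify $\Scal^*f<\infty$ a.e.\ via the $L^2$ theory and change of angles, and choose $R$ depending on $p$ and $r_w$ so that the weighted change-of-aperture constants $2^{knr_0/p}$ are summable against $R^{-k}$. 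None of this appears in your sketch, and without it the self-referential term cannot be absorbed. The final extension from $L^2(\R^n)$ to $L^q(w)$, $q\in\mathcal{W}_w(p_-(L),p_+(L))$, by density and the weighted boundedness from Part I is fine and matches the paper's appeal to \cite[Remark 4.22]{MartellPrisuelos}.
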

\begin{proof}
We start by proving part ($a$).
Fix $x\in \R^n$ and $t>0$, and consider 
$$
B:=B(x, t),\quad  \widetilde{f}(y):=(t^2L)^{m-1}e^{-\frac{t^2}{2}L}f(y),\quad \textrm{and}\quad H(y):=\widetilde{f}(y)-(\widetilde{f})_{4B},
$$
where
$(\widetilde{f})_{4B}=\dashint_{4B}\widetilde{f}(y)\,dy$.
Then, applying the fact that $\{t^2Le^{-t^2L}\}_{t>0}\in \mathcal{F}_{\infty}(L^2\rightarrow L^2)$ and  that $t^2Le^{-t^2L}1=t^2L1=0$ (see \cite{Auscher}), we  obtain that
\begin{align*}
&\left(\int_{B}
|t^2Le^{-\frac{t^2}{2}L}\widetilde{f}(y)|^2\,dy\right)^{\frac{1}{2}}
=
\left(\int_{B}
|t^2Le^{-\frac{t^2}{2}L}H(y)|^2\,dy\right)^{\frac{1}{2}}
\\
&\qquad
\lesssim
\left(\int_{B}
|t^2Le^{-\frac{t^2}{2}L}(H\chi_{4B})(y)|^2\,dy\right)^{\frac{1}{2}}
+
\sum_{j\geq 2}\left(\int_{B}
|t^2Le^{-\frac{t^2}{2}L}(H\chi_{C_j(B)})(y)|^2\,dy\right)^{\frac{1}{2}}
\\
&\qquad
\lesssim
\left(\int_{4B}
|H(y)|^2\,dy\right)^{\frac{1}{2}}
+
\sum_{j\geq 2}e^{-c4^{j}}\left(\int_{2^{j+1}B}
|H(y)|^2\,dy\right)^{\frac{1}{2}}=:I+\sum_{j\geq 2}e^{-c4^{j}}I_j.
\\
\end{align*}
By Poincar\'e inequality, we conclude that
\begin{align*}
I\lesssim  t\left(\int_{8B}|\nabla_y \widetilde{f}(y)|^2\,dy\right)^{\frac{1}{2}},
\end{align*}
and that
\begin{align*}
I_j&
\lesssim 
\left(\int_{2^{j+1}B}|\widetilde{f}(y)-(\widetilde{f})_{2^{j+1}B}|^2\,dy\right)^{\frac{1}{2}}
+|2^{j+1}B|^{1/2}
\sum_{k=2}^{j}|(\widetilde{f})_{2^{k}B}-(\widetilde{f})_{2^{k+1}B}|
\\
&
\lesssim
|2^{j+1}B|^{1/2}
\sum_{k=2}^{j}\left(\dashint_{2^{k+1}B}|\widetilde{f}(y)-(\widetilde{f})_{2^{k+1}B}|^2\,dy\right)^{\frac{1}{2}}
\\
&
\lesssim
\sum_{k=2}^{j}2^{(j-k)n/2}2^{k} t\left(\int_{2^{k+2}B}|\nabla_y \widetilde{f}(y)|^2\,dy\right)^{\frac{1}{2}}.
\end{align*}
Then,
\begin{align*}
\left(\int_{B}
|t^2Le^{-\frac{t^2}{2}L}\widetilde{f}(y)|^2\,dy\right)^{\frac{1}{2}}
&
\lesssim  t\left(\int_{8B}|\nabla_y\widetilde{f}(y)|^2dy\right)^{\frac{1}{2}}
+
\sum_{j\geq 2} e^{-c4^j}\sum_{k=2}^{j}2^{\frac{n(j-k)}{2}+k}
 t\left(\int_{2^{k+2}B}| \nabla_y\widetilde{f}(y)|^2\,dy\right)^{\frac{1}{2}}
\\
&
\lesssim
\sum_{j\geq 1} e^{-c4^j}
 \left(\int_{2^{j+2}B}|t\nabla_y \widetilde{f}(y)|^2\,dy\right)^{\frac{1}{2}},
\end{align*}
and therefore 
\begin{align*}
\Scal_{m,\hh}f(x)\lesssim 
\sum_{j\geq 1} e^{-c4^j}
\Grm_{m-1,\hh}^{2^{j+3}}f(x),
\end{align*}
recall the definition of $\Grm_{m-1,\hh}^{2^{j+3}}$ in \eqref{squarealpha} and \eqref{square-H-2}.
Then, for every $0<p<\infty$ and $w\in A_{\infty}$, taking the $L^p(w)$ norm in both sides of the previous inequality and applying change of angles (see \cite[Proposition 3.2]{MartellPrisuelos}), 
we conclude that
\begin{align*}
\|\Scal_{m,\hh}f\|_{L^p(w)}\lesssim \sum_{j\geq 1} e^{-c4^j}
\left\|\Grm_{m-1,\hh}^{2^{j+3}}f\right\|_{L^p(w)}
\lesssim \|\Grm_{m-1,\hh}f\|_{L^p(w)}\sum_{j\geq 1}e^{-c4^j}
\lesssim \|\Grm_{m-1,\hh}f\|_{L^p(w)}.
\end{align*}

%%%%%%%%%%%%%%%%%%%%%%%%%%%%%%%%%%%%%%%%%%%%%%%%%%%%%%%%%%%%%%%%
\medskip

As for part ($b$), fix $w\in A_{\infty}$, $f\in L^2(\R^n)$, and $0<p<\infty$, and note that following the same argument of \cite[Lemma 5.4]{HofmannMayboroda}\footnote{We want to thank Steve Hofmann for sharing with us this argument that was omitted in \cite[Lemma 5.4]{HofmannMayboroda}.}, there exist a dimensional constant $k_0\in \N$ and $C_1>0$ such that for all $K\in \N$ and $k\in \N_0$.
$$
\Scal_{K,\pp}^{2^{k}}f(x)\leq C_1\left(\Grm_{K-1,\pp}^{2^{k+k_0}}f(x)\right)^{\frac{1}{2}}\left(\Scal_{K,\pp}^{2^{k+k_0}}f(x)\right)^{\frac{1}{2}},
$$
where recall the definitions of $\Scal_{K,\pp}^{2^{k}}$ and $\Grm_{K-1,\pp}^{2^{k+k_0}}$ in \eqref{squarealpha}, \eqref{square-P-1}, and \eqref{square-P-2}.
Now, for some $R>0$, to be determinate later, consider
$$
\Scal^*f(x):=\sum_{k=0}^{\infty}R^{-k}\Scal_{K,\pp}^{2^k}f(x)\quad \textrm{and}\quad
\Grm^*f(x):=\sum_{k=0}^{\infty}R^{-k}\Grm_{K-1,\pp}^{2^k}f(x).
$$
By the above inequality, and using Young's inequality, we have
\begin{align}\label{finiterevised}
\Scal^*f(x)&\leq \sum_{k=0}^{\infty}R^{-(k+k_0)}\left(C_1^2R^{2k_0}
\Grm_{K-1,\pp}^{2^{k+k_0}}f(x)\right)^{\frac{1}{2}}\left(\Scal_{K,\pp}^{2^{k+k_0}}f(x)\right)^{\frac{1}{2}}
\\\nonumber
&\leq
\frac{1}{2}\left(C_1^2R^{2k_0}\sum_{k=0}^{\infty}R^{-(k+k_0)}
\Grm_{K-1,\pp}^{2^{k+k_0}}f(x)+\sum_{k=0}^{\infty}R^{-(k+k_0)}
\Scal_{K,\pp}^{2^{k+k_0}}f(x)\right)
\\\nonumber
&\leq \frac{1}{2}\left(R^{2k_0}C_1^2\Grm^*f(x)+\Scal^*f(x)\right).
\end{align}
Besides, since $\Scal_{K,\pp}$ is bounded from $L^2(\R^n)$ to $L^2(\R^n)$ (see for instance  \cite{MartellPrisuelos}), applying \cite[Proposition 4, Section 3]{CoifmanMeyerStein} or \cite{Auscherangles}, and choosing $R>2^{\frac{n}{2}+1}$, we have that
$$
\|\Scal^*f\|_{L^2(\R^n)}\leq\sum_{k=0}^{\infty}R^{-k}\|\Scal_{K,\pp}^{2^k}f\|_{L^2(\R^n)}
\lesssim \sum_{k=0}^{\infty}R^{-k}2^{\frac{kn}{2}}\|\Scal_{K,\pp}f\|_{L^2(\R^n)}
\lesssim \sum_{k=0}^{\infty}R^{-k}2^{\frac{kn}{2}}\|f\|_{L^2(\R^n)}<\infty,
$$
hence $\Scal^*f(x)<\infty$ a. e. $x\in \R^n$.
Then, by \eqref{finiterevised},
\begin{align*}
\Scal_{K,\pp}f(x)\leq \Scal^{*}f(x)\leq CR^{2k_0} \Grm^*f(x).
\end{align*}
Hence,  taking the $L^p(w)$ norm in the previous inequality, by \cite[Proposition 3.29]{MartellPrisuelos}, we conclude that,
for $r_0>\max\{p/2,r_w\}$ and $R=2^{\frac{nr_0}{p}+1}>2^{\frac{n}{2}+1}$,
$$
\|\Scal_{K,\pp}f\|_{L^p(w)}\lesssim \sum_{k=0}^{\infty}R^{-(k-2k_0)}\|\Grm_{K-1,\pp}^{2^k}f\|_{L^p(w)}
\lesssim R^{2k_0}\sum_{k=0}^{\infty}R^{-k}2^{\frac{knr_0}{p}}
\|\Grm_{K-1,\pp}f\|_{L^p(w)}\lesssim \|\Grm_{K-1,\pp}f\|_{L^p(w)}.
$$
Following the explanation of \cite[Remark 4.22]{MartellPrisuelos} we conclude $(a)$ and $(b)$ for all functions $f\in L^q(w)$ with $w\in A_{\infty}$ and $q\in \mathcal{W}_w(p_-(L),p_+(L))$.
\end{proof}
%

%%%%%%%%%%%%%%%%%%%%%%%%%%%%%%%%%%%%%%%%%%%%%%%%%%%%%%%%%
%%%%%%%%%%%%%%%%%%%%%%%%%%%%%%%%%%%%%%%%%%%%%%%%%%%%%%%%%
To conclude this section we present some estimates for $\mol$s.
\begin{lemma}\label{lemma:m-h}
Given $p>p_0$,  $w\in A_{\frac{p}{p_0}}$, $\varepsilon>0$, and $M\in \N$, let $\mm$ be a $\mol$ and let $Q$ be a cube associated with $\mm$.  For every $i\geq 1$ and $k=0,1,2,\ldots, M$, there holds
\begin{list}{$(\theenumi)$}{\usecounter{enumi}\leftmargin=1cm \labelwidth=1cm\itemsep=0.2cm\topsep=.2cm \renewcommand{\theenumi}{\alph{enumi}}}

\item[]  $\left\|\left((\ell(Q)^2L)^{-k}\mm\right)\chi_{C_i(Q)}\right\|_{L^{p_0}(\R^n)}\lesssim 2^{-i\varepsilon}w(2^{i+1}Q)^{-1}|2^{i+1}Q|^{\frac{1}{p_0}}$.
\end{list}
\end{lemma}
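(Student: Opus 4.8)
The plan is to deduce the pointwise (local-average) bound on $C_i(Q)$ from the molecular $w$-norm estimate \eqref{astast} by passing from $L^p(w)$ to $L^{p_0}(\R^n)$ via H\"older's inequality, which is available precisely because $w\in A_{\frac{p}{p_0}}$, so that $w^{1-(p/p_0)'}$ is locally integrable with the reverse-type control $A_{p/p_0}$ provides. Concretely, writing $\mm_k:=(\ell(Q)^2L)^{-k}\mm$, I would estimate
\begin{align*}
\left\|\mm_k\,\chi_{C_i(Q)}\right\|_{L^{p_0}(\R^n)}
&=\left(\int_{C_i(Q)}|\mm_k(x)|^{p_0}\,dx\right)^{\frac{1}{p_0}}
=\left(\int_{C_i(Q)}|\mm_k(x)|^{p_0}w(x)^{\frac{p_0}{p}}w(x)^{-\frac{p_0}{p}}\,dx\right)^{\frac{1}{p_0}}
\\
&\leq \left\|\mm_k\,\chi_{C_i(Q)}\right\|_{L^{p}(w)}
\left(\int_{C_i(Q)}w(x)^{-\frac{p_0}{p-p_0}}\,dx\right)^{\frac{1}{p_0}-\frac{1}{p}},
\end{align*}
using H\"older with exponents $\frac{p}{p_0}$ and $\left(\frac{p}{p_0}\right)'=\frac{p}{p-p_0}$.

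The second factor is controlled by the $A_{p/p_0}$ condition applied on the ball $2^{i+1}Q$ (which contains $C_i(Q)$): since $w\in A_{p/p_0}$,
$$
\left(\dashint_{2^{i+1}Q}w\,dx\right)\left(\dashint_{2^{i+1}Q}w^{1-(p/p_0)'}\,dx\right)^{(p/p_0)'-1}\leq [w]_{A_{p/p_0}},
$$
and noting $(p/p_0)'-1=\frac{p_0}{p-p_0}$ and $1-(p/p_0)'=-\frac{p_0}{p-p_0}$, this rearranges to
$$
\left(\int_{2^{i+1}Q}w^{-\frac{p_0}{p-p_0}}\,dx\right)^{\frac{p-p_0}{p_0}}
\lesssim |2^{i+1}Q|^{\frac{p}{p_0}}\,w(2^{i+1}Q)^{-1},
$$
hence
$$
\left(\int_{C_i(Q)}w^{-\frac{p_0}{p-p_0}}\,dx\right)^{\frac{1}{p_0}-\frac{1}{p}}
\leq \left(\int_{2^{i+1}Q}w^{-\frac{p_0}{p-p_0}}\,dx\right)^{\frac{1}{p_0}-\frac{1}{p}}
\lesssim |2^{i+1}Q|^{\frac{1}{p_0}}\,w(2^{i+1}Q)^{\frac{1}{p}-1}.
$$
Combining this with \eqref{astast}, which gives $\left\|\mm_k\,\chi_{C_i(Q)}\right\|_{L^{p}(w)}\leq 2^{-i\varepsilon}w(2^{i+1}Q)^{\frac{1}{p}-1}$, yields
$$
\left\|\mm_k\,\chi_{C_i(Q)}\right\|_{L^{p_0}(\R^n)}
\lesssim 2^{-i\varepsilon}\,w(2^{i+1}Q)^{\frac{1}{p}-1}\cdot |2^{i+1}Q|^{\frac{1}{p_0}}\,w(2^{i+1}Q)^{\frac{1}{p}-1}.
$$

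I should double-check the bookkeeping of exponents here: the exponent on $w(2^{i+1}Q)$ coming out should be $2\left(\frac{1}{p}-1\right)$ from my naive combination, but the claimed statement has exponent $-1$, so the H\"older factor must actually contribute $w(2^{i+1}Q)^{1-\frac{1}{p}}\cdot$(something), i.e. the correct form of the $A_{p/p_0}$ estimate is
$$
\left(\int_{2^{i+1}Q}w^{-\frac{p_0}{p-p_0}}\,dx\right)^{\frac{1}{p_0}-\frac{1}{p}}
\lesssim |2^{i+1}Q|^{\frac{1}{p_0}}\cdot\left(\frac{w(2^{i+1}Q)}{|2^{i+1}Q|}\right)^{-\frac{1}{p}}\cdot|2^{i+1}Q|^{-\frac{1}{p}+\frac{1}{p}}\cdots
$$
and carefully one gets the net power of $w(2^{i+1}Q)$ to be $-\frac{1}{p}$, so that together with \eqref{astast} the total power is $\frac{1}{p}-1-\frac{1}{p}=-1$, matching the claim. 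The main obstacle — really the only nontrivial point — is getting these exponent arithmetic exactly right when translating the $A_{p/p_0}$ inequality into a bound on $\int w^{-p_0/(p-p_0)}$; once that is pinned down, the rest is a one-line application of H\"older and \eqref{astast}. I would also record that the implicit constant depends only on $[w]_{A_{p/p_0}}$, $p$, $p_0$, and $n$, and that the estimate for the smaller set $C_i(Q)$ follows from the one on $2^{i+1}Q$ since $w^{-p_0/(p-p_0)}\geq 0$.
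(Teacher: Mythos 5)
Your argument is correct and is essentially the paper's own proof: H\"older's inequality with exponents $\frac{p}{p_0}$ and $\bigl(\frac{p}{p_0}\bigr)'$, the molecular bound \eqref{astast}, and the $A_{\frac{p}{p_0}}$ condition applied on $2^{i+1}Q$, whose contribution is $|2^{i+1}Q|^{\frac{1}{p_0}}\,w(2^{i+1}Q)^{-\frac{1}{p}}$, so that the total power of $w(2^{i+1}Q)$ is indeed $-1$. Just clean up the two bookkeeping slips you already flag: the exponent in your displayed $A_{\frac{p}{p_0}}$ inequality should be $\frac{p}{p_0}-1$, not $\bigl(\frac{p}{p_0}\bigr)'-1$ (the rearranged form you then use is the correct one), and the weight factor coming from H\"older is $w(2^{i+1}Q)^{-\frac{1}{p}}$ rather than $w(2^{i+1}Q)^{\frac{1}{p}-1}$, exactly as your final correction states.
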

\begin{proof}
 Using H\"older's inequality, \eqref{astast}, and the fact that $w\in A_{\frac{p}{p_0}}$, we have that
\begin{align*}
&\left\|\left((\ell(Q)^2L)^{-k}\mm\right)\chi_{C_i(Q)}\right\|_{L^{p_0}(\mathbb{R}^n)}
\\
&\qquad\leq \left(\int_{C_i(Q)}|(\ell(Q)^2L)^{-k}\mm(y)|^{p}w(y) \, dy\right)^{\frac{1}{p}}
\left(\dashint_{2^{i+1}Q}w(y)^{1-\left(\frac{p}{p_0}\right)'} \, dy\right)^{\frac{1}{p}\left(\frac{p}{p_0}-1\right)}|2^{i+1}Q|^{
\frac{1}{p_0}-\frac{1}{p}}
\\
&\qquad\lesssim 2^{-i\varepsilon}w(2^{i+1}Q)^{\frac{1}{p}-1}
\left(\dashint_{2^{i+1}Q}w(y) \, dy\right)^{-\frac{1}{p}}|2^{i+1}Q|^{
\frac{1}{p_0}-\frac{1}{p}}
\\
& \qquad\lesssim 2^{-i\varepsilon}
w(2^{i+1}Q)^{-1}|2^{i+1}Q|^{\frac{1}{p_0}}.
\end{align*}
\end{proof}

%%%%%%%%%%%%%%%%%%%%%%%%%%%%%%%%%%%%%%%%%%%%%%%%%%%%%%%%%%%%%%%%%%%%%%%%%%%
%%%%%%%%%%%%%%%%%%%%%%%%%%%%%%%%%%%%%%%%%%%%%%%%%%%%%%%%%%%%%%%%%%%%%%%%%%%
\section{Characterization of the weighted Hardy spaces defined by square functions associated with the heat semigroup}\label{SH}
%%%%%%%%%%%%%%%%%%%%%%%%%%%%%%%%%%%%%%%%%%%%%%%%%%%%%%%%%%%%%%%%%%%%%%%%%%%%%%
%%%%%%%%%%%%%%%%%%%%%%%%%%%%%%%%%%%%%%%%%%%%%%%%%%%%%%%%%%%%%%%%%%%%%%%%%%%%%%

Theorem \ref{thm:hardychartz} follows at once from the following proposition:

\begin{proposition}\label{lema:SH-1}
Let $w\in A_{\infty}$, $p,q\in \mathcal{W}_w(p_-(L),p_+(L))$,  $\varepsilon>0$, $K\in \N_0$, and $M\in \N$ be
 such that $M>\frac{n}{2}\left(r_w-\frac{1}{p_-(L)}\right)$. Then
\begin{list}{$(\theenumi)$}{\usecounter{enumi}\leftmargin=1cm \labelwidth=1cm\itemsep=0.2cm\topsep=.2cm \renewcommand{\theenumi}{\alph{enumi}}}

\item 
$
\mathbb{H}^1_{L,p,\varepsilon,M}(w)=\mathbb{H}^1_{\Scal_{m,\hh},p}(w)
$
with equivalent norms, for all $m\in \N$.

\item 
$
H^1_{\Scal_{m,\hh},p}(w)$ and $H_{\Scal_{m,\hh},q}^1(w)
$ are isomorphic, for all $m\in \N$.

\item  
 $
\mathbb{H}_{L,p,\varepsilon,M}^1(w)= \mathbb{H}_{\Grm_{m,\hh},p}^1(w)= \mathbb{H}_{\Gcal_{m,\hh},p}^1(w),
$  with equivalent norms, for all $m\in \N_{0}$.
\end{list}

\end{proposition}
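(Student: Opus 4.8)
The plan is to prove Proposition \ref{lema:SH-1} via the usual two-sided scheme: first show that molecules have uniformly controlled images under the relevant square functions (the ``molecules $\subset$ Hardy space'' direction, plus the passage to the completion), and then establish the reverse containment through a tent-space / Calder\'on reproducing formula argument producing a molecular representation from a function whose square function lies in $L^1(w)$. Throughout, Lemma \ref{lema:comparacion,SH-GH,SK-GK} is what lets us reduce statements about $\Scal_{m,\hh}$ to statements about $\Grm_{m-1,\hh}$, and Lemma \ref{lemma:m-h} together with \eqref{astast} provides the quantitative decay of molecules in both weighted and unweighted Lebesgue norms; the change-of-angles results from \cite{MartellPrisuelos} handle the aperture bookkeeping for free.

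For part $(a)$, the inclusion $\mathbb{H}^1_{L,p,\varepsilon,M}(w)\subset \mathbb{H}^1_{\Scal_{m,\hh},p}(w)$ reduces, by $\ell^1$-summability of the coefficients and sublinearity, to a single uniform estimate: for any $(w,p,\varepsilon,M)$-molecule $\mm$ with associated cube $Q$, one has $\|\Scal_{m,\hh}\mm\|_{L^1(w)}\lesssim 1$. I would prove this by splitting $\R^n=\bigcup_i C_i(Q)$ and, on each annulus, further splitting the $(y,t)$-integration in the cone into the ``local'' part $t\lesssim \ell(Q)$ and the ``global'' part $t\gtrsim\ell(Q)$; for the local part one uses the $L^{p_0}\to L^{p_0}$ off-diagonal decay of $(t^2L)^m e^{-t^2L}$ (exponential type, since $p_-(L)<p_0<p_+(L)$) against the molecular bound on $\left((\ell(Q)^2 L)^{-k}\mm\right)\chi_{C_j(Q)}$ with $k=0$, while for the global part one writes $(t^2L)^m e^{-t^2L}=(t^2L)^{m+M}e^{-t^2L}\,(t^2/\ell(Q)^2)^{-M}(\ell(Q)^2 L)^{-M}$ to gain $M$ powers of $\ell(Q)/t$, which is exactly where the constraint $M>\frac n2\left(r_w-\frac1{p_-(L)}\right)$ is consumed: it guarantees the resulting geometric-type series in $i$ (after passing from $L^{p_0}(\R^n)$ back to $L^1(w)$ via H\"older and the doubling/$RH$ inequalities \eqref{doublingcondition}, \eqref{pesosineq:RHq}) converges. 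The reverse inclusion $\mathbb{H}^1_{\Scal_{m,\hh},p}(w)\subset \mathbb{H}^1_{L,p,\varepsilon,M}(w)$ is the substantive half: given $f\in L^p(w)$ with $\Scal_{m,\hh}f\in L^1(w)$, one inserts the Calder\'on reproducing formula $f=c_{m,M}\int_0^\infty (t^2L)^{m+M+1}e^{-2t^2L}(t^2 L)^{-M}\cdots f\,\frac{dt}t$ (suitably normalized), decomposes $\R^{n+1}_+$ using the tent-space stopping-time construction over the level sets $O_k=\{\Scal_{m,\hh}f>2^k\}$ and a Whitney decomposition of each $O_k$ into cubes, and checks that the resulting pieces $\mm_i$ are (fixed multiples of) $(w,p,\varepsilon,M)$-molecules with $\sum|\lambda_i|\lesssim \|\Scal_{m,\hh}f\|_{L^1(w)}$ and $f=\sum\lambda_i\mm_i$ in $L^p(w)$; verifying the molecular $w$-norm bound for these pieces is the main place where the weighted tent-space estimates and $\mathcal W_w$-membership of $p$ are used.

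Part $(c)$ is then essentially a corollary of the machinery already assembled: one half is $\|\Scal_{m,\hh}f\|_{L^p(w)}\lesssim\|\Grm_{m-1,\hh}f\|_{L^p(w)}$ and (for $m=0$, using $\Scal_{1,\hh}$ in place of the nonexistent $\Grm_{-1,\hh}$, or rather comparing $\Grm_{m,\hh}$ and $\Gcal_{m,\hh}$ directly) the trivial pointwise bound $\Grm_{m,\hh}f\le \Gcal_{m,\hh}f$; the reverse comparison $\|\Gcal_{m,\hh}f\|_{L^p(w)}\lesssim\|\Scal_{m+1,\hh}f\|_{L^p(w)}+\|\Grm_{m,\hh}f\|_{L^p(w)}$ and the molecule estimates $\|\Grm_{m,\hh}\mm\|_{L^1(w)}\lesssim1$, $\|\Gcal_{m,\hh}\mm\|_{L^1(w)}\lesssim1$ (proved exactly as the $\Scal_{m,\hh}$ estimate, now using the off-diagonal bounds for $t\nabla_y(t^2L)^m e^{-t^2L}$ and $t\nabla_{y,t}(t^2L)^m e^{-t^2L}$, valid on $(q_-(L),q_+(L))\supset$ the relevant range since $q_-(L)=p_-(L)$) close the loop; combining these with part $(a)$ and the Calder\'on reproducing formula adapted to $\Grm$ and $\Gcal$ gives the coincidence of all three $\mathbb H^1$ spaces with equivalent norms.

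For part $(b)$, one observes that $\mathbb H^1_{\Scal_{m,\hh},p}(w)$ and $\mathbb H^1_{\Scal_{m,\hh},q}(w)$ both coincide (with equivalent norms, by parts $(a)$) with $\mathbb H^1_{L,p,\varepsilon,M}(w)$ and $\mathbb H^1_{L,q,\varepsilon,M}(w)$ respectively, so it suffices to show the latter two molecular spaces have isomorphic completions; since a $(w,p,\varepsilon,M)$-molecule need not be a $(w,q,\varepsilon,M)$-molecule, the isomorphism is realized not by the identity but by the density of a common core — e.g.\ finite sums of molecules with extra integrability, or $L^p(w)\cap L^q(w)$ data — on which both norms are comparable, together with the fact that both completions embed into a common ambient space (say $L^2_{loc}$ or the molecular Hardy space for a fixed reference parameter) so that the densely-defined comparison extends to a Banach-space isomorphism. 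I expect the genuine obstacle to be the reverse inclusion in part $(a)$ — specifically, checking that the Whitney pieces coming from the tent-space decomposition of $\Scal_{m,\hh}f$ satisfy the weighted molecular decay $\sum_i 2^{i\varepsilon}w(2^{i+1}Q)^{1-1/p}\sum_{k\le M}\|((\ell(Q)^2L)^{-k}\mm)\chi_{C_i(Q)}\|_{L^p(w)}\lesssim1$, since this is where one must carefully track how the weight interacts with both the off-diagonal decay and the $\int_0^\infty\frac{dt}t$ integration over a Whitney region, and it is the step with no purely formal shortcut.
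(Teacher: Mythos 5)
Your parts $(a)$ and $(c)$ follow essentially the paper's route: the forward inclusion via a uniform bound $\|\Scal_{m,\hh}\mm\|_{L^1(w)}\lesssim 1$ for $\mol$s, the converse via a Calder\'on reproducing formula combined with the tent-space/Whitney decomposition of the level sets of $\Scal_{m,\hh}f$, and part $(c)$ by the chain through $\Scal_{m+1,\hh}$ using Lemma \ref{lema:comparacion,SH-GH,SK-GK} and the pointwise bound $\Grm_{m,\hh}\le\Gcal_{m,\hh}$ together with the molecule estimates for $\Grm_{m,\hh}$ and $\Gcal_{m,\hh}$. Your global-in-$t$ treatment of the molecule estimate (commuting $(\ell(Q)^2L)^{M}$ through the semigroup to gain $(\ell(Q)/t)^{2M}$ against $(\ell(Q)^2L)^{-M}\mm$) is a legitimate variant of the paper's $A_Q=I-B_Q$, $B_Q=(I-e^{-\ell(Q)^2L})^M$ splitting in Proposition \ref{prop:acotacion-T}. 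Two points you gloss over, however, are real steps: the diagonal terms, where $x$ lies in a fixed dilate of $2^{i+1}Q$, get nothing from off-diagonal decay and must be handled by the weighted $L^p(w)$-boundedness of the square function plus H\"older and doubling; and the reproducing formula is a priori an $L^2(\R^n)$ identity, so its validity in $L^p(w)$ (the content of Remark \ref{remark:calderonreproducing}, via duality with vertical square functions on $L^{p'}(w^{1-p'})$) must be proved before you may decompose $f\in L^p(w)$ with it.

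The genuine gap is part $(b)$. You rightly note that the identity on molecules cannot work since a $\mol$ need not be a $(w,q,\varepsilon,M)$-molecule, and you propose a common dense core; but you give no argument that any such core (finite sums of ``molecules with extra integrability'', or $L^p(w)\cap L^q(w)$ data) is dense in $\mathbb{H}^1_{\Scal_{m,\hh},p}(w)$ for the norm $\|\Scal_{m,\hh}\cdot\|_{L^1(w)}$, and that density is precisely the content of $(b)$. The mechanism the paper uses, and which your sketch misses, is already inside your part $(a)$: the specific molecules $\mm_l^j$ produced by the tent-space construction satisfy the molecular estimates for \emph{every} allowable exponent simultaneously (the same computation with $L^q(w)$ in place of $L^p(w)$ gives $\|(\ell(Q_l^j)^2L)^{-k}\mm_l^j\|_{L^q(C_i(Q_l^j),w)}\lesssim e^{-c4^i}w(2^{i+1}Q_l^j)^{\frac1q-1}$), the finite partial sums $f_N=\sum_{0<j+|l|\le N}\lambda_l^j\mm_l^j$ belong to $L^q(w)$, and $\|\Scal_{m,\hh}(f-f_N)\|_{L^1(w)}\lesssim\sum_{j+|l|>N}|\lambda_l^j|\to0$; hence every $f\in\mathbb{H}^1_{\Scal_{m,\hh},p}(w)$ is an $H^1$-limit of elements of $\mathbb{H}^1_{\Scal_{m,\hh},q}(w)$, and since both spaces carry the same norm, the completions are isomorphic. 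Your auxiliary device of embedding both completions ``into a common ambient space (say $L^2_{loc}$)'' is unjustified (elements of the abstract completion are not a priori functions) and, once the common-core density is actually established, unnecessary. As written, part $(b)$ is a plan whose key step is asserted rather than proved.
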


In order to prove Proposition \ref{lema:SH-1} we need to show that, for $m\in \N_0$, the $L^1(w)$ norms of the square functions $\Scal_{m+1,\hh}$, $\Grm_{m,\hh}$, and  $\Gcal_{m,\hh}$, applied to $\mol$s, are uniformly controlled. Moreover, we shall show in Proposition \ref{prop:contro-mol-SF}  that all the square functions in \eqref{square-H-1}--\eqref{square-P-3} satisfy those uniform estimates. That proposition follows from the following general result:

\begin{proposition}\label{prop:acotacion-T}
Let $w\in A_{\infty}$ and let $\{\mathcal{T}_t\}_{t>0}$ be a family of sublinear operators satisfying the following conditions:

\begin{list}{$(\theenumi)$}{\usecounter{enumi}\leftmargin=1cm \labelwidth=1cm\itemsep=0.2cm\topsep=.2cm \renewcommand{\theenumi}{\alph{enumi}}}

\item $\{\mathcal{T}_t\}_{t>0}\in \mathcal{F}_{\infty}(L^{p_0}\rightarrow L^2)$ for all $p_-(L)<p_0\leq 2$.

\item $\widehat{S}f(x):=\left(\iint_{\Gamma(x)}|\mathcal{T}_tf(y)|^2\frac{dy \ dt}{t^{n+1}}\right)^{\frac{1}{2}}$ is bounded on $L^p(w)$ for every $p\in \mathcal{W}_w(p_-(L),p_+(L))$.

\item There exists $C>0$ so that for every $t>0$ there holds $\mathcal{T}_{t}=C\,\mathcal{T}_{\frac{t}{\sqrt{2}}}\circ e^{-\frac{t^2}{2}L}$.

\item For every $\lambda>0$, there exists $C_\lambda>0$ such that for every $t>0$ it follows that
$$
\mathcal{T}_{\sqrt{1+\lambda}\,t}=C_\lambda\,\mathcal{T}_{t}\circ e^{-\lambda t^2L}.
$$
\end{list}
Then, for every $\mm$, a $\mol$ with $p\in \mathcal{W}_w(p_-(L),p_+(L))$, $\varepsilon>0$, and $M>\frac{n}{2}\left(r_w-\frac{1}{p_-(L)}\right)$, it follows that $
\|\widehat{S}\mm\|_{L^1(w)}\lesssim 1,
$
with constants independent of $\mm$.
\end{proposition}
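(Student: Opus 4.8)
The plan is to fix a single $\mol$ $\mm$ with an associated cube $Q$, write $s:=\ell(Q)$, and prove $\|\widehat S\mm\|_{L^1(w)}\lesssim 1$ with constants depending only on the structural constants. First I would fix an auxiliary exponent $p_0$ with $p_-(L)<p_0\le 2$, $p_0<p$, and close enough to $p_-(L)$ that simultaneously $w\in A_{p/p_0}$ (possible since $p\in\mathcal W_w(p_-(L),p_+(L))$ forces $p/p_-(L)>r_w$) and $2M>n\bigl(r_w-\tfrac1{p_0}\bigr)$ (possible because the hypothesis $M>\tfrac n2\bigl(r_w-\tfrac1{p_-(L)}\bigr)$ is \emph{strict}); with this choice, Lemma \ref{lemma:m-h} converts the molecular bounds \eqref{astast} into unweighted $L^{p_0}$ bounds for the pieces of $\mm$ and of $(\ell(Q)^2L)^{-M}\mm$. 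By sublinearity I would split $\widehat S\mm\le \widehat S^{\mathrm{loc}}\mm+\widehat S^{\mathrm{glob}}\mm$ according to whether the height variable satisfies $t\le s$ or $t>s$, and estimate each part.

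\emph{Local part.} Write $\mm=\sum_{i\ge1}\mm\,\chi_{C_i(Q)}$, so $\widehat S^{\mathrm{loc}}\mm\le\sum_i\widehat S^{\mathrm{loc}}(\mm\,\chi_{C_i(Q)})$. Fix $i$. On a fixed dilate $2^{i+3}Q$ I would use H\"older's inequality, the boundedness of $\widehat S$ on $L^p(w)$ from hypothesis (b) (and $\widehat S^{\mathrm{loc}}\le\widehat S$ pointwise), and \eqref{astast} with $k=0$, to get a bound $\lesssim w(2^{i+1}Q)^{1-\frac1p}\|\mm\,\chi_{C_i(Q)}\|_{L^p(w)}\lesssim 2^{-i\varepsilon}$. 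On each annulus $2^{i+j+1}Q\setminus 2^{i+j}Q$ with $j\ge 2$ the support of $\mm\,\chi_{C_i(Q)}$ lies at distance $\gtrsim 2^{i+j}s$ while $t\le s$, so the exponential off-diagonal estimates of hypothesis (a) produce pointwise decay of order $e^{-c\,4^{i+j}}$; integrating in $x$ against $w$, using the doubling property \eqref{doublingcondition} and Lemma \ref{lemma:m-h}, the exponential factor dominates every polynomial in $2^{i+j}$, and summation in $j$ and then in $i$ gives $\lesssim\sum_i 2^{-i\varepsilon}\lesssim 1$. No use of $M$ is needed here.

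\emph{Global part.} Here the cancellation of the molecule enters: write $\mm=(s^2L)^M\mathfrak b$ with $\mathfrak b:=(s^2L)^{-M}\mm$, and use the semigroup composition identities (c) and (d) to produce, for every $t>0$ and up to a fixed constant,
\[
\mathcal T_t\mm=\mathcal T_t(s^2L)^M\mathfrak b=c\,(s/t)^{2M}\,V_t\mathfrak b,\qquad V_t:=\mathcal T_{t/\sqrt2}\circ\bigl[(t^2L)^Me^{-t^2L/2}\bigr].
\]
The family $\{V_t\}_{t>0}$ is a composition of an $\mathcal F_\infty(L^q\to L^2)$ family with an $\mathcal F_\infty(L^{p_0}\to L^q)$ family, hence it lies in $\mathcal F_\infty(L^{p_0}\to L^2)$, and its vertical square function is bounded on $L^2(\R^n)$ (from the $L^2$-boundedness of $\mathcal T_{t/\sqrt2}$, which is hypothesis (a) with exponent $2$, together with the quadratic estimate for $(t^2L)^Me^{-t^2L/2}$, $M\ge1$); consequently the conical square function $\widehat S_V$ associated with $\{V_t\}$ is bounded on $L^p(w)$ for $p\in\mathcal W_w(p_-(L),p_+(L))$ by the machinery of \cite{MartellPrisuelos}. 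Now decompose $\mathfrak b=\sum_{i\ge1}\mathfrak b\,\chi_{C_i(Q)}$; the molecular $w$-norm with $k=M$ and Lemma \ref{lemma:m-h} control $\|\mathfrak b\,\chi_{C_i(Q)}\|_{L^p(w)}$ and $\|\mathfrak b\,\chi_{C_i(Q)}\|_{L^{p_0}(\R^n)}$. Since $\widehat S^{\mathrm{glob}}\mm\le c\sum_i G_i$ with $G_i:=\bigl(\iint_{\Gamma(\cdot),\,t>s}(s/t)^{4M}|V_t(\mathfrak b\,\chi_{C_i(Q)})|^2\,\tfrac{dy\,dt}{t^{n+1}}\bigr)^{1/2}$, I would estimate each $\|G_i\|_{L^1(w)}$ by splitting $\R^n$ as before: on $2^{i+3}Q$, bound $G_i\le\widehat S_V(\mathfrak b\,\chi_{C_i(Q)})$ (dropping the factor $(s/t)^{2M}\chi_{\{t>s\}}\le 1$) and use its $L^p(w)$-boundedness with H\"older to get $\lesssim 2^{-i\varepsilon}$; on the far annuli, the needed decay comes from the factor $(s/t)^{2M}$ when $t$ is large and from the exponential off-diagonal estimates of $V_t$ when $t$ is small, yielding for $x$ in the $j$-th annulus a bound of order $2^{-(i+j)2M}$ times $L^{p_0}$-scaling factors, and the sum over $j$ against the doubling weight (whose $j$-th annulus has $w$-measure $\lesssim 2^{jnr_w}w(2^{i+1}Q)$) converges precisely because $2M>n\bigl(r_w-\tfrac1{p_0}\bigr)$, giving again $\lesssim 2^{-i\varepsilon}$. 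Summing over $i$ finishes the proof.

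\emph{Main obstacle.} The global part is where the difficulty lies, in two respects. First, manufacturing the gain $(s/t)^{2M}$ through the identities (c)--(d) while ensuring that the residual family $\{V_t\}$ still enjoys \emph{both} off-diagonal estimates and a Littlewood--Paley quadratic estimate on $L^2$ — this is what makes $\widehat S_V$ bounded on the \emph{weighted} space $L^p(w)$, without which the near-diagonal piece of the global term cannot be controlled (note that $(s^2L)^M(\mathfrak b\,\chi_{C_i(Q)})$ is not a localized piece of $\mm$, so its weighted norm is not directly available from the molecular norm, and the unweighted $L^{p_0}$-boundedness of $\widehat S_V$ does not suffice to pass to $L^1(w)$ on a fixed set unless $p_0$ lies in a weight class it need not belong to). Second, the far-annuli bookkeeping: one must balance the doubling exponent $r_w$ of $w$ against the off-diagonal scaling exponent $\sim n/p_0$, and the resulting geometric series converges if and only if $2M>n\bigl(r_w-\tfrac1{p_0}\bigr)$; this is exactly where the hypothesis on $M$ is sharp and where one must exploit its strictness to choose $p_0$ slightly above $p_-(L)$. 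By contrast the local term is routine, since the exponential character of the heat off-diagonal estimates overwhelms every polynomial factor and no cancellation from $M$ is needed there.
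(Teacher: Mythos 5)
Your local part ($t\le \ell(Q)$) is essentially the paper's argument, and your far-annuli bookkeeping philosophy is also the right one (modulo the small point that you should invoke doubling through $w\in A_r$ for some $r>r_w$, not $A_{r_w}$, which $w$ need not satisfy; the strictness of $M>\frac n2(r_w-\frac1{p_-(L)})$ absorbs this). The genuine gap is in the near-diagonal piece of your global part. Having written $\mathcal T_t\mm=c\,(\ell(Q)/t)^{2M}V_t\mathfrak b$ with $V_t=\mathcal T_{t/\sqrt2}\circ(t^2L)^Me^{-t^2L/2}$, you must estimate $\|\chi_{2^{i+3}Q}\widehat S_V\mathfrak b_i\|_{L^1(w)}$ by H\"older against $\|\widehat S_V\mathfrak b_i\|_{L^p(w)}$, and for this you assert that $\widehat S_V$ is bounded on $L^p(w)$ for every $p\in\mathcal W_w(p_-(L),p_+(L))$ ``by the machinery of \cite{MartellPrisuelos}''. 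This does not follow from the hypotheses: (b) concerns $\widehat S$, not $\widehat S_V$; (a) only yields $\{V_t\}_{t>0}\in\mathcal F_\infty(L^{p_0}\rightarrow L^2)$ for $p_0\le2$, and the unweighted $L^2(\R^n)$ bound for $\widehat S_V$ that you correctly derive is far from the weighted bound at a single $p$ that may lie well above $2$ and carry a nontrivial $A_p/RH$ condition. The results of \cite{MartellPrisuelos} are proved for the concrete heat/Poisson square functions, not for conical square functions of an arbitrary family, and the known arguments for the upper part of the range use mapping properties into $L^q$ with $q>2$ near $p_+(L)$, or duality/commutation structure, none of which is available for the abstract $\mathcal T_t$, which is only assumed to land in $L^2$. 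Since, as you yourself note, $(\ell(Q)^2L)^M\mathfrak b_i$ is not a localized piece of $\mm$, you also cannot route this term back through hypothesis (b). Thus the key near estimate of the global term rests on an unproven weighted square-function bound, which is essentially as hard as the proposition itself.

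The paper's proof is designed precisely to avoid introducing $\widehat S_V$: it splits $I=A_Q+B_Q$ with $B_Q=(I-e^{-\ell(Q)^2L})^M$ and $A_Q=I-B_Q$. For the $A_Q$ part one writes $A_Q\mm=A_Q(\ell(Q)^2L)^M h$ with $h=(\ell(Q)^2L)^{-M}\mm$ and uses that $A_Q(\ell(Q)^2L)^M=\sum_{k=1}^M C_{k,M}(k\ell(Q)^2L)^Me^{-k\ell(Q)^2L}$ is bounded on $L^p(w)$ uniformly in $Q$, so the near terms are handled by hypothesis (b) applied to $\widehat S$ itself; the gain $(\ell(Q)/t)^{2M}$ in the far annuli is produced by (c), which converts $e^{-\frac{t^2}{2}L}A_Q(\ell(Q)^2L)^M$ into $\sum_k C_{k,M}(\ell(Q)^2/s_{Q,t}^2)^M(s_{Q,t}^2L)^Me^{-s_{Q,t}^2L}$ with $s_{Q,t}^2=k\ell(Q)^2+t^2/2\gtrsim t^2$. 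For the $B_Q$ part the near terms again use (b) plus the uniform $L^p(w)$ boundedness of $B_Q$, while the far terms use (d) with $\lambda=M$ (after enlarging the aperture) together with the representation $e^{-Mt^2L}B_Q=\big(\int_0^{\ell(Q)^2}\partial_r e^{-(r+t^2)L}\,dr\big)^M$, which gives $(\ell(Q)^2/t^2)^M$ times Gaffney decay in $L^{p_0}$. If you wish to keep your factorization, you must either prove the weighted boundedness of $\widehat S_V$ from (a)--(d) or add it as a hypothesis; otherwise the $A_Q/B_Q$ decomposition is the way to close the argument.
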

Assuming this result momentarily we obtain the following:
\begin{proposition}\label{prop:contro-mol-SF}
Let $S$ be any of the square functions considered in \eqref{square-H-1}--\eqref{square-P-3}. For every $w\in A_{\infty}$ and $\mm$ a $\mol$ with $p\in \mathcal{W}_w(p_-(L),p_+(L))$, $\varepsilon>0$, and  $M>\frac{n}{2}\left(r_w-\frac{1}{p_-(L)}\right)$, there hold
\begin{list}{$(\theenumi)$}{\usecounter{enumi}\leftmargin=1cm \labelwidth=1cm\itemsep=0.2cm\topsep=.2cm \renewcommand{\theenumi}{\alph{enumi}}}
\item$
\|S \mm\|_{L^1(w)}\leq C.
$

\item For all $f\in \mathbb{H}_{L,p,\varepsilon,M}^1(w)$, 
$
\|S f\|_{L^1(w)}\lesssim \|f\|_{\mathbb{H}_{L,p,\varepsilon,M}^1(w)}.
$
\end{list}
\end{proposition}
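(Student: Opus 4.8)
The plan is to exhibit, for each square function $S$ appearing in \eqref{square-H-1}--\eqref{square-P-3}, a family $\{\mathcal{T}_t\}_{t>0}$ for which $S$ is the operator $\widehat{S}$ of Proposition \ref{prop:acotacion-T}, to verify the four structural hypotheses in each case, and then to read off part (a) from that proposition; part (b) will be a soft consequence. For the heat square functions this is immediate. First I would take $\mathcal{T}_t:=(t^2L)^m e^{-t^2L}$ for $\Scal_{m,\hh}$ ($m\in\N$) and $\mathcal{T}_t:=t\nabla_y(t^2L)^m e^{-t^2L}$ for $\Grm_{m,\hh}$ ($m\in\N_0$), so that $\widehat{S}$ is $\Scal_{m,\hh}$, respectively $\Grm_{m,\hh}$. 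Hypothesis (a) is the exponential off-diagonal decay recalled in Subsection \ref{section:OD}, valid on $p_-(L)<p_0\le2$ since $q_-(L)=p_-(L)$ and $p_+(L),q_+(L)>2$; hypothesis (b) is precisely the weighted norm inequality for these square functions established in \cite{MartellPrisuelos}; and hypotheses (c) and (d) are elementary operator identities: factoring $e^{-t^2L}=e^{-\frac{t^2}{2}L}\circ e^{-\frac{t^2}{2}L}$ and rescaling $t\mapsto t/\sqrt{2}$ gives (c), while $e^{-(1+\lambda)t^2L}=e^{-t^2L}\circ e^{-\lambda t^2L}$ gives (d), with constants $2^m$ or $2^{m+1/2}$ and $(1+\lambda)^m$ or $(1+\lambda)^{m+1/2}$. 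Hence $\|\Scal_{m,\hh}\mm\|_{L^1(w)}\lesssim1$ and $\|\Grm_{m,\hh}\mm\|_{L^1(w)}\lesssim1$.

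For $\Gcal_{m,\hh}$ the $t$-derivative spoils hypotheses (c)--(d) (the operators it produces carry different constants), so I would instead reduce to the previous cases. Writing $u(y,t):=(t^2L)^m e^{-t^2L}f(y)$ and $|t\nabla_{y,t}u|^2=|t\nabla_y u|^2+|t\partial_t u|^2$, a one-line computation gives $t\partial_t u=2m(t^2L)^m e^{-t^2L}f-2(t^2L)^{m+1}e^{-t^2L}f$, so pointwise $\Gcal_{m,\hh}f\le\Grm_{m,\hh}f+2m\,\Scal_{m,\hh}f+2\,\Scal_{m+1,\hh}f$ (with the middle term absent when $m=0$), and $\|\Gcal_{m,\hh}\mm\|_{L^1(w)}\lesssim1$ follows from the first step.

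For the Poisson square functions hypotheses (c)--(d) genuinely fail (they are phrased with the heat semigroup), so I would route through subordination. Using $e^{-t\sqrt{L}}=\frac{t}{2\sqrt{\pi}}\int_0^\infty s^{-3/2}e^{-t^2/(4s)}e^{-sL}\,ds$ and the substitution $s=t^2u$, one obtains $(t\sqrt{L})^{2K}e^{-t\sqrt{L}}=\int_0^\infty\varphi_K(u)(t^2uL)^K e^{-t^2uL}\,du$ with $\varphi_K(u)=c\,u^{-K-3/2}e^{-1/(4u)}$, which decays rapidly as $u\to0^+$ and like $u^{-K-3/2}$ as $u\to\infty$. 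Minkowski's integral inequality, followed by the change of variable $\tau=t\sqrt{u}$ in the cone integral (which turns $\Gamma(x)$ into $\Gamma^{1/\sqrt{u}}(x)$ and produces a Jacobian factor $u^{n/4}$), gives $\Scal_{K,\pp}\mm(x)\lesssim\int_0^\infty\varphi_K(u)\,u^{n/4}\,\Scal_{K,\hh}^{1/\sqrt{u}}\mm(x)\,du$; taking $L^1(w)$-norms, applying the change-of-aperture estimate \cite[Proposition 3.2]{MartellPrisuelos} and the bound $\|\Scal_{K,\hh}\mm\|_{L^1(w)}\lesssim1$ ($K\in\N$) from the first step reduces everything to the convergence of an explicit integral in $u$, whence $\|\Scal_{K,\pp}\mm\|_{L^1(w)}\lesssim1$. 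The operators $\Grm_{K,\pp}$ and $\Gcal_{K,\pp}$ ($K\in\N_0$) are handled by the same scheme — for $\Gcal_{K,\pp}$ one first performs the $\nabla_{y,t}$-split, the $t$-derivative producing $(t\sqrt{L})^{2K}e^{-t\sqrt{L}}$ and $(t\sqrt{L})^{2K+1}e^{-t\sqrt{L}}$, both subordinated to the heat family (for the odd power using also $\sqrt{L}=\frac{2}{\sqrt{\pi}}\int_0^\infty Le^{-r^2L}\,dr$) — and alternatively Lemma \ref{lema:comparacion,SH-GH,SK-GK}(b) lets one replace $\Scal_{K,\pp}$ by $\Grm_{K-1,\pp}$. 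I expect this Poisson reduction to be the main obstacle: one must organise the subordination, together with the change-of-aperture inequality (and, for low orders $K$ in high dimension, the reduction provided by Lemma \ref{lema:comparacion,SH-GH,SK-GK}), so that the auxiliary integral in $u$ actually converges.

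Finally, part (b) follows from (a) by a standard density argument. If $f\in\mathbb{H}_{L,p,\varepsilon,M}^1(w)$ and $f=\sum_i\lambda_i\mm_i$ is a $\p$ of $f$, then since $S=\widehat{S}$ is bounded on $L^p(w)$ (hypothesis (b) of Proposition \ref{prop:acotacion-T} for the heat square functions, and \cite{MartellPrisuelos} in general) and sublinear, $S\big(\sum_{i\le N}\lambda_i\mm_i\big)\to Sf$ in $L^p(w)$; passing to a subsequence converging a.e., and using $S\big(\sum_{i\le N}\lambda_i\mm_i\big)\le\sum_{i\le N}|\lambda_i|S\mm_i$, one obtains $Sf\le\sum_i|\lambda_i|S\mm_i$ a.e. Hence $\|Sf\|_{L^1(w)}\le\sum_i|\lambda_i|\,\|S\mm_i\|_{L^1(w)}\lesssim\sum_i|\lambda_i|$ by part (a), and taking the infimum over all representations of $f$ gives $\|Sf\|_{L^1(w)}\lesssim\|f\|_{\mathbb{H}_{L,p,\varepsilon,M}^1(w)}$.
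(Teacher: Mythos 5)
Your treatment of the heat square functions and of part (b) is sound and essentially the paper's: the paper also deduces (b) from (a) using the $L^p(w)$-boundedness of $S$ and the $L^p(w)$-convergence of the molecular representation, and it also runs the heat case through Proposition \ref{prop:acotacion-T} (in fact it only verifies the hypotheses for the two families $t\nabla_y e^{-t^2L}$ and $t^2Le^{-t^2L}$, obtaining $\Gcal_{\hh}$, and treats the other indices by the comparison results of Part I; your direct verification of (c)--(d) for all $m$ and the pointwise splitting of $\Gcal_{m,\hh}$ are correct and, for the heat part, somewhat more self-contained).

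The genuine gap is the Poisson case, exactly where you anticipated trouble, and your proposed remedy does not close it. After subordination, the weight for $(t\sqrt{L})^{2K}e^{-t\sqrt{L}}$ behaves like $u^{-K-\frac{3}{2}}$ as $u\to\infty$, the change of variables in the cone produces $u^{\frac{n}{4}}$, and for $u\ge 1$ the aperture $1/\sqrt{u}\le 1$ yields no usable gain for a general $w\in A_\infty$ (any gain from shrinking the aperture depends on the reverse H\"older exponent of $w$ and can be arbitrarily weak); hence the $u$-integral converges only when $K>\frac{n}{4}-\frac{1}{2}$. The proposition, however, must cover all $K\in\N$ for $\Scal_{K,\pp}$ and all $K\in\N_0$ for $\Grm_{K,\pp}$ and $\Gcal_{K,\pp}$: for instance $\Grm_{\pp}=\Grm_{0,\pp}$ carries the subordinated weight $u^{-2}$, so the computation already fails for $n\ge 4$. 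Moreover, Lemma \ref{lema:comparacion,SH-GH,SK-GK}, part (b), goes the wrong way for this purpose: it reduces $\Scal_{K,\pp}$ to $\Grm_{K-1,\pp}$, whose subordinated weight $u^{-K-1}$ is worse by half a power, and the gradient Poisson functions have to be handled in any case. The paper avoids this entirely: once $\Gcal_{\hh}$ (hence $\Grm_{\hh}$ and $\Scal_{\hh}\le\frac{1}{2}\Gcal_{\hh}$) is controlled via Proposition \ref{prop:acotacion-T}, it invokes the comparison theorems of the first part, \cite[Theorems 1.14 and 1.15, Remark 4.22]{MartellPrisuelos}, which hold for all $0<p<\infty$ and $w\in A_\infty$ (in particular $p=1$) and apply to functions in $L^p(w)$, and which dominate every square function in \eqref{square-H-1}--\eqref{square-P-3} by the basic heat ones; those comparisons are proved there by a finer analysis (splitting the subordination integral and exploiting off-diagonal estimates and changes of angle in tent spaces) precisely to avoid the divergence you run into. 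To complete your argument you would have to either reproduce that analysis or cite those theorems directly.
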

\begin{proof}
Assuming $(a)$ let us prove $(b)$. Fix $w\in A_{\infty}$ and take $p\in \mathcal{W}_w(p_-(L),p_+(L))$, $\varepsilon>0$, and $M\in \N$ such that $M>\frac{n}{2}\left(r_w-\frac{1}{p_-(L)}\right)$. Then,
for $f\in \mathbb{H}_{L,p,\varepsilon,M}^1(w)$,
 there exists a $\p$ of $f$, $f=\sum_{i=1}^{\infty}{\lambda}_i {\mm}_i$, such that
 $$
 \sum_{i=1}^{\infty}|{\lambda}_i| \leq 2\|f\|_{\mathbb{H}_{L,p,\varepsilon,M}^1(w)}.
 $$
On the other hand, since $\sum_{i=1}^{\infty}{\lambda}_i {\mm}_i$ converges in $L^p(w)$ and since for any choice of $S$, we have that $S$ is a sublinear operator bounded on $L^p(w)$ (see \cite[Theorems 1.12 and 1.13]{MartellPrisuelos}) and by part ($a$), we have
\begin{align*}
\|Sf\|_{L^1(w)}=\left\|S\left(\sum_{i=1}^{\infty}{\lambda}_i {\mm}_i\right)\right\|_{L^1(w)}
\leq \sum_{i=1}^{\infty}|{\lambda}_i|\, \|S {\mm}_i\|_{L^1(w)}\leq C \sum_{i=1}^{\infty}|{\lambda}_i|
\lesssim  \|f\|_{\mathbb{H}_{L,p,\varepsilon,M}^1(w)},
\end{align*}
as desired.

As for part ($a$), we
first show the desired estimate for $\Gcal_{\hh}$. To this end, notice that
$
|t\nabla_{y,t}e^{-t^2L}f|^2= |t\nabla_{y}e^{-t^2L}f|^2+4|t^2Le^{-t^2L}f|^2.
$
Besides, both
$
\mathcal{T}_t:=t\nabla_{y}e^{-t^2L}$
and 
$
\mathcal{T}_t:=t^2Le^{-t^2L}
$
satisfy the hypotheses of Proposition \ref{prop:acotacion-T}:
 $(a)$ follows from the off-diagonal estimates satisfied by the families $\{t\nabla_ye^{-t^2L}\}_{t>0}$ and $\{t^2Le^{-t^2L}\}_{t>0}$  (see Section \ref{section:OD}); $(b)$ is contained in \cite[Theorem 1.12, part $(a)$]{MartellPrisuelos}; and finally $(c)$ and $(d)$ follow from easy calculations. Thus we can apply  Proposition \ref{prop:acotacion-T} and obtain the desired estimate for $\Gcal_{\hh}$.

To obtain the estimates for the other square functions we can use \cite[Theorems 1.14 and 1.15, Remark 4.22]{MartellPrisuelos}, and the fact that $\Scal_{\hh}f\leq \frac{1}{2}\Gcal_{\hh}f$. Easy details are left to the interested reader.
\end{proof}

\subsection{Proof of Proposition \ref{prop:acotacion-T}}
Fix $w\in A_{\infty}$, $p\in \mathcal{W}_w(p_-(L),p_+(L))$, $\varepsilon>0$, $M>\frac{n}{2}\left(r_w-\frac{1}{p_-(L)}\right)$, 
and $\mm$ a $\mol$.
Let $Q$ be a cube associated with $\mm$. Since $w\in A_{\frac{p}{p_-(L)}}$ we can pick $p_-(L)<p_0<2$, close enough to $p_-(L)$, so that $w\in A_{\frac{p}{p_0}}$ and simultaneously
\begin{align}\label{choice-M}
M>\frac{n}{2}\left(\frac{r_wp_0}{p_-(L)}-\frac{1}{p_0}\right).
\end{align}

For every $j,i\geq 1$, consider $Q_i:=2^{i+1}Q$,  $\mm_i:=\mm\chi_{C_i(Q)}$, and $C_{ji}:=C_j(Q_i)$.
Note that
$$
|\mathcal{T}_t\mm(y)|\leq|\mathcal{T}_t\mm(y)|\chi_{(0,\ell(Q))}(t)+|\mathcal{T}_t\mm(y)|\chi_{[\ell(Q),\infty)}(t)=:F_1(y,t)+F_2(y,t),
$$
and therefore, recalling \eqref{normacono},
\begin{align*}
\|\widehat{S}\mm\|_{L^1(w)}
\leq
\big\||\|F_1\||_{\Gamma(\cdot)}\big\|_{L^1(w)}+\big\||\|F_
2\||_{\Gamma(\cdot)}\big\|_{L^1(w)}
 =: I+II.
\end{align*}
We estimate each term in turn. Note first that
$$
F_1(y,t)
\leq\sum_{i\geq 1}|\mathcal{T}_t\mm_i(y)|\chi_{(0,\ell(Q))}(t)=:\sum_{i\geq 1}F_{1,i}(y,t).
$$
Then,
\begin{align}\label{acotacionmol:I}
I\lesssim \sum_{i\geq 1}\big\||\|F_{1,i}\||_{\Gamma(\cdot)}\big\|_{L^1(16Q_i,w)}
+ \sum_{j\geq 4}\sum_{i\geq 1}\big\||\|F_{1,i}\||_{\Gamma(\cdot)}\big\|_{L^1(C_{ji},w)}
=:\sum_{i\geq 1}I_i + \sum_{j\geq 4}\sum_{i\geq 1}I_{ji}.
\end{align}
For $I_i$, apply H\"older's inequality, hypothesis $(b)$, \eqref{doublingcondition}, and \eqref{astast} (for $k=0$), to obtain
\begin{align}\label{Ii}
I_i
\le
\|\widehat{S}\mm_i\|_{L^1(16Q_i,w)}
\lesssim
w(16Q_i)^{\frac1{p'}}
\|\widehat{S}\mm_i\|_{L^p(w)}
\lesssim w(Q_i)^{\frac1{p'}} \|\mm_i\|_{L^p(w)}
\le 2^{-i\varepsilon}.
\end{align}
To estimate $I_{ji}$,  note that, for every $j\geq 4$ and $i\geq 1$, $0< t< \ell(Q)$, and $x\in C_{ji}$, it follows that
$
B(x,t)\subset 2^{j+2}Q_i\setminus 2^{j-1}Q_i.
$
This, hypothesis $(a)$, and Lemma \ref{lemma:m-h} imply that
\begin{multline*}
 \left(\int_{B(x,t)}|\mathcal{T}_t\mm_i(y)|^2\,dy\right)^{\frac{1}{2}}\leq
  \left(\int_{2^{j+2}Q_i\setminus 2^{j-1}Q_i}|\mathcal{T}_t\mm_i(y)|^2\,dy\right)^{\frac{1}{2}}
\\
\leq t^{-n\left(\frac{1}{p_0}-\frac{1}{2}\right)}e^{-c\frac{4^j\ell(Q_i)^2}{t^2}}
 \|\mm_i\|_{L^{p_0}(\mathbb{R}^n)}
 \lesssim
t^{-n\left(\frac{1}{p_0}-\frac{1}{2}\right)}e^{-c\frac{4^j\ell(Q_i)^2}{t^2}} 2^{-i\varepsilon}
w(Q_i)^{-1}|Q_i|^{\frac{1}{p_0}}.
\end{multline*}
Then,  \eqref{doublingcondition} and easy calculations lead to
\begin{multline*}
I_{ji}
\lesssim 2^{-i\varepsilon}
w(Q_i)^{-1}|Q_i|^{\frac{1}{p_0}}
\int_{C_{ji}}\left(\int_{0}^{\ell(Q)}
t^{-2n\left(\frac{1}{p_0}-\frac{1}{2}\right)}e^{-c\frac{4^j\ell(Q_i)^2}{t^2}}
\frac{dt}{t^{n+1}}\right)^{\frac{1}{2}}w(x)dx
\\
%&=2^{-i\varepsilon}
%w(Q_i)^{-1}|Q_i|^{\frac{1}{p_0}}w(2^{j+1}Q_i)
%\left(\int_0^{\ell(Q)}t^{-2n\left(\frac{1}{p_0}-\frac{1}{2}\right)}e^{-c\frac{4^j\ell(Q_i)^2}{t^2}}
%\frac{dt}{t^{n+1}}\right)^{\frac{1}{2}}
%\\
%\nonumber
%&\leq 2^{-i\varepsilon}2^{jnr}
%w(Q_i)^{-1}|Q_i|^{\frac{1}{p_0}}w(2^{j+1}Q_i)
%\left(\int_0^{\ell(Q)}t^{-\frac{2n}{p_0}}e^{-c\frac{4^j\ell(Q_i)^2}{t^2}}
%\frac{dt}{t}\right)^{\frac{1}{2}}
%\\
\lesssim
2^{-i\varepsilon}
w(Q_i)^{-1}|Q_i|^{\frac{1}{p_0}}w(2^{j+1}Q_i)
\big(4^j\,\ell(Q_i)^2)^{-\frac{n}{2p_0}}
\left(\int_{2^{j+i}}^{\infty}s^{\frac{2n}{p_0}}e^{-cs^2}
\frac{d s}{s}\right)^{\frac{1}{2}}
\lesssim 2^{-i\varepsilon}
e^{-c4^{j}}.
\end{multline*}
Plugging this and \eqref{Ii} into \eqref{acotacionmol:I}, we finally conclude the desired estimate for $I$:
\begin{align}\label{acotacionmoleculasI}
I\leq \sum_{i\geq 1}2^{-i\varepsilon}+\sum_{j\geq 4} \sum_{i\geq 1} 2^{-i\varepsilon}
e^{-c4^{j}}\lesssim 1.
\end{align}

We turn now to estimate $II$. First, set
$$
B_Q:=\left(I-e^{-\ell(Q)^2L}\right)^M \qquad \textrm{and}\qquad A_{Q}:=I-B_Q,
$$
and observe that
\begin{align}\label{acotacionmoleculasII}
F_2(y,t)
\le
|\mathcal{T}_tA_Q\mm(y)|\chi_{[\ell(Q),\infty)}(t)+
|\mathcal{T}_tB_Q\mm(y)|\chi_{[\ell(Q),\infty)}(t)=:F_3(y,t)+F_4(y,t).
\end{align}
We start estimating the term related to $F_3$. To do that, consider
$$
h(y):=\sum_{i\ge 1} h_i(y)
:=
\sum_{i\ge 1}((\ell(Q)^2L)^{-M}\mm(y)) \,\chi_{C_i(Q)}(y)
,
$$
and note that
$$
F_3(y,t)
\leq
\sum_{i\geq 1}|\mathcal{T}_tA_Q(\ell(Q)^2L)^Mh_i(y)|\chi_{[\ell(Q),\infty)}(t).
$$
Then, we obtain
\begin{align*}
\big\||\|F_3\||_{\Gamma(\cdot)}\big\|_{L^1(w)}
&\leq
\sum_{i\geq1} \left\|\left(\iint_{\Gamma(\cdot)}|\mathcal{T}_tA_Q(\ell(Q)^2L)^Mh_i(y)|^2\chi_{[\ell(Q),\infty)}(t)\frac{dy\,dt}{t^{n+1}}\right)^{\frac{1}{2}}\right\|_{L^1(16Q_i,w)}
\\
&\qquad\qquad+\sum_{j\geq 4}
\sum_{i\geq 1} \left\|\left(\iint_{\Gamma(\cdot)}|\mathcal{T}_tA_Q(\ell(Q)^2L)^Mh_i(y)|^2\chi_{[\ell(Q),\infty)}(t)\frac{dy\,dt}{t^{n+1}}\right)^{\frac{1}{2}}\right\|_{L^1(C_{ji},w)}
\\
&=:
\sum_{i\geq 1}II_i
+\sum_{j\geq 4}
\sum_{i\geq 1}II_{ji}.
\end{align*}
Before estimating $II_i$ and $II_{ji}$, note that by \cite[Proposition 5.8]{AuscherMartell:II} one can easily obtain that the operator $A_Q(\ell(Q)^2L)^M$ is bounded on $L^p(w)$ uniformly on $Q$ since $p\in \mathcal{W}_w(p_-(L),p_+(L))$ and
\begin{align*}
A_Q(\ell(Q)^2L)^M
=
(I-(I-e^{-\ell(Q)^2L})^M)(\ell(Q)^2L)^M
=\sum_{k=1}^{M}C_{k,M} (k\ell(Q)^2 L)^M e^{-k\ell(Q)^2L}.
\end{align*}
This, H\"older's inequality,  hypothesis $(b)$, \eqref{doublingcondition}, and \eqref{astast} imply
\begin{align}\label{Ii-1}
II_i
\leq
w(16Q_i)^{\frac1{p'}}\,\|A_Q(\ell(Q)^2L)^Mh_i\|_{L^p(w)}
\lesssim
w(Q_i)^{\frac1{p'}}\, \|h_i\|_{L^p(w)}
\leq
2^{-i\varepsilon}.
\end{align}
We turn now to estimate $II_{ji}$. Note that for every $x\in C_{ji}$, $j\geq 4$, $i\geq 1$
$$
\big\{(y,t):y\in B(x,t), \ t\geq\ell(Q)\big\}
\subset E_1\cup E_2\cup E_3,
$$
where
$$
E_1:=\big(2^{j+2}Q_i\setminus 2^{j-1}Q_i\big)\times \big[\ell(Q),2^{j-2}\ell(Q_i)\big],
\qquad
E_2:=
2^{j}Q_i\times \big(2^{j-2}\ell(Q_i),\infty\big),
$$
and
$$
E_3:=
\Big(\bigcup_{l\geq j}C_l(Q_i)\Big)\times \big(2^{j-2}\ell(Q_i),\infty\Big).
$$
Consequently,
\begin{align*}
II_{ji}\leq w(2^{j+1}Q_i)\sum_{l=1}^{3}\left(\iint_{E_l}|\mathcal{T}_tA_Q(\ell(Q)^2L)^Mh_i(y)|^2\frac{dy\,dt}{t^{n+1}}\right)^{\frac{1}{2}}
=:
w(2^{j+1}Q_i)\sum_{l=1}^{3}G_l.
 \end{align*}
Now observe that hypothesis $(c)$ implies
$$
|\mathcal{T}_tA_Q(\ell(Q)^2L)^Mh_i|=C\,|\mathcal{T}_{\frac{t}{\sqrt{2}}}e^{-\frac{t^2}{2}L}A_Q(\ell(Q)^2L)^Mh_i|.
$$
Besides,
\begin{align*}
e^{-\frac{t^2}{2}L}A_Q(\ell(Q)^2L)^M
=
\sum_{k=1}^M C_{k,M}\left(\frac{\ell(Q)^2}{s_{Q,t}^2}\right)^M\big(s_{Q,t}^2L\big)^Me^{-s_{Q,t}^2L},
\quad\textrm{where}\quad s_{Q,t}:=\left(k\ell(Q)^2+\frac{t^2}{2}\right)^{\frac{1}{2}}.
\end{align*}
Then, applying hypothesis $(a)$, the fact that $\{(t^2 L)^M e^{-t^2L}\}_{t>0}\in \mathcal{F}_{\infty}(L^{p_0}\rightarrow L^{p_0})$ together with \cite[Lemma 2.1]{MartellPrisuelos} (see also \cite[Lemma 2.3]{HofmannMartell}), and Lemma \ref{lemma:m-h},  we have
\begin{align*}
G_1
& \lesssim \sum_{k=1}^M\left(\int_{\ell(Q)}^{2^{j-2}\ell(Q_i)}
\Bigg(\frac{\ell(Q)^2}{s_{Q,t}^2}\Bigg)^{2M}
\int_{2^{j+2}Q_i\setminus 2^{j-1}Q_i}
|\mathcal{T}_{\frac{t}{\sqrt{2}}}\big(s_{Q,t}^2L \big)^M e^{-s_{Q,t}^2L}h_i(y)|^2dy \frac{dt}{t^{n+1}}\right)^{\frac{1}{2}}
\\
& \lesssim
\left(\int_{\ell(Q)}^{2^{j-2}\ell(Q_i)}
\ell(Q)^{4M}t^{-4M-\frac{2n}{p_0}
}e^{-c\frac{4^{j+i}\ell(Q)^2}{t^2}}
\frac{dt}{t}\right)^{\frac{1}{2}}2^{-i\varepsilon}w(Q_i)^{-1}|Q_i|^{\frac{1}{p_0}}
%\\
%&\lesssim
%\left(\int_{2}^{2^{j+i}}
%s^{4M+\frac{2n}{p_0}
%}e^{-cs^2}
%\frac{ds}{s}\right)^{\frac{1}{2}}(2^{j+i}\ell(Q))^{-\left(2M+\frac{n}{p_0}\right)}
%\ell(Q)^{2M}2^{-i\varepsilon}w(Q_i)^{-1}|Q_i|^{\frac{1}{p_0}}
\\
&\lesssim
2^{-j\left(2M+\frac{n}{p_0}\right)}2^{-i(2M+\varepsilon)}
w(Q_i)^{-1}.
 \end{align*}
Similarly,
\begin{align*}
G_2
\lesssim
\left(\int_{2^{j-2}\ell(Q_i)}^{\infty}
\ell(Q)^{4M}t^{-4M-\frac{2n}{p_0}}
\frac{dt}{t}\right)^{\frac{1}{2}}2^{-i\varepsilon}w(Q_i)^{-1}|Q_i|^{\frac{1}{p_0}}
 \lesssim
2^{-j\left(2M+\frac{n}{p_0}\right)}
2^{-i(2M+\varepsilon)}w(Q_i)^{-1},
 \end{align*}
 and
\begin{multline*}
G_3
\lesssim\sum_{l\geq j}
\left(\int_0^{\infty}
s^{4M+\frac{2n}{p_0}
}e^{-cs^2}
\frac{ds}{s}\right)^{\frac{1}{2}}(2^{(l+i)}\ell(Q))^{-\left(2M+\frac{n}{p_0}\right)}
\ell(Q)^{2M}2^{-i\varepsilon}w(Q_i)^{-1}|Q_i|^{\frac{1}{p_0}}
\\
\lesssim
2^{-j\left(2M+\frac{n}{p_0}\right)}
2^{-i(2M+\varepsilon)}w(Q_i)^{-1}.
 \end{multline*}
Collecting the estimates for $G_1$, $G_2$, and $G_3$ gives us
$$
II_{ji}\lesssim \frac{w(2^{j+1}Q_i)}{w(Q_i)}2^{-j\left(2M+\frac{n}{p_0}\right)}
2^{-i(2M+\varepsilon)}
\lesssim
2^{-j\left(2M+\frac{n}{p_0}-\frac{r_wp_0n}{p_-(L)}\right)}
2^{-i(2M+\varepsilon)},
$$
where we have used that $w\in A_{\frac{r_wp_0}{p_-(L)}}$, by the definition of $r_w$ and the fact that $p_-(L)<p_0$, and \eqref{doublingcondition}.
By this and by \eqref{Ii-1}, we conclude that \eqref{choice-M} yields
\begin{align}\label{acotacionmoleculasF3}
 \big\||\|F_3\||_{\Gamma(\cdot)}\big\|_{L^1(w)}\lesssim \sum_{i\geq 1}2^{-i\varepsilon}+\sum_{j\geq 4}\sum_{i\geq 1}2^{-j\left(2M+\frac{n}{p_0}-\frac{r_wp_0n}{p_-(L)}\right)}
2^{-i(2M+\varepsilon)}\lesssim 1.
 \end{align}

We next estimate $F_4$:
\begin{align*}
\big\||\|F_4\||_{\Gamma(\cdot)}\big\|_{L^1(w)}
&\leq
\sum_{i\geq 1} \left\|\left(\iint_{\Gamma(\cdot)}|\mathcal{T}_tB_Q\mm_i(y)|^2\chi_{[\ell(Q),\infty)}(t)\frac{dy\,dt}{t^{n+1}}\right)^{\frac{1}{2}}\right\|_{L^1(16Q_i,w)}
\\
&\qquad\quad+
\sum_{i\geq 1}\sum_{j\geq 4} \left\|\left(\iint_{\Gamma(\cdot)}|\mathcal{T}_tB_Q\mm_i(y)|^2\chi_{[\ell(Q),\infty)}(t)\frac{dy\,dt}{t^{n+1}}\right)^{\frac{1}{2}}\right\|_{L^1(C_{ji},w)}
\\
&=:
\sum_{i\geq 1}III_i
+
\sum_{i\geq 1}\sum_{j\geq 4}III_{ji}.
\end{align*}

Note that the fact that the semigroup $\{e^{-tL}\}_{t>0}$ is uniformly bounded on $L^p(w)$, since it was assumed that $p\in \mathcal{W}_w(p_-(L),p_+(L))$ (see \cite[Proposition 5.8]{AuscherMartell:II}), easily gives that $B_Q$ is bounded on $L^p(w)$ uniformly in $Q$. Hence, H\"older's inequality,
hypothesis $(b)$, and \eqref{astast} (for $k=0$), yield 
\begin{align}\label{IIIi-1}
III_{i}\lesssim w(16Q_i)^{\frac1{p'}}\,\|\widehat{S} B_Q \mm_i\|_{L^p(w)}
\lesssim w(16Q_i)^{\frac1{p'}}\, \|\mm_i\|_{L^p(w)}
\lesssim 2^{-i\varepsilon}.
\end{align}
Now, change the variable $t$ into $\sqrt{1+M}t$ and use hypothesis $(d)$ to obtain
\begin{multline*}
III_{ji}
\lesssim \left\|
\left(\iint_{\Gamma^{\sqrt{1+M}}(\cdot)}
|\mathcal{T}_{\sqrt{1+M}t}B_Q\mm_i(y)|^2\chi_{[\ell(Q)/\sqrt{1+M},\infty)}(t)\frac{dy\,dt}{t^{n+1}}
\right)^{\frac{1}{2}}\right\|_{L^1(C_{ji},w)}
\\
\approx
\left\|
\left(\iint_{\Gamma^{\sqrt{1+M}}(\cdot)}
|\mathcal{T}_{t}e^{-Mt^2L}B_Q\mm_i(y)|^2\chi_{[\ell(Q)/\sqrt{1+M},\infty)}(t)\frac{dy\,dt}{t^{n+1}}
\right)^{\frac{1}{2}}\right\|_{L^1(C_{ji},w)}.
\end{multline*}
Now, note that for  $\widehat{E}_1, \widehat{E}_2$ closed subsets in $\mathbb{R}^n$, and $f\in L^{p_0}(\mathbb{R}^n)$ such that $\textrm{supp}(f)\subset \widehat{E}_1$,  we have
\begin{align}\label{AB}
&\left\|e^{-Mt^2L}B_Qf\right\|_{L^{p_0}(\widehat{E}_2)}
=
\left\|(e^{-t^2L}-e^{-(t^2+\ell(Q)^2)L})^M f\right\|_{L^{p_0}(\widehat{E}_2)}
=
\left\|\left(\int_{0}^{\ell(Q)^2}\partial_r e^{-(r+t^2)L}\,dr\right)^M f\,  \right\|_{L^{p_0}(\widehat{E}_2)}
\\ \nonumber
&\leq
\int_{0}^{\ell(Q)^2}\!\!\!\cdots \int_{0}^{\ell(Q)^2}\left\|
\big((r_1+\dots+r_M+M t^2) L \big)^M e^{-(r_1+\dots+r_M+M t^2) L}f\right\|_{L^{p_0}(\widehat{E}_2)} \ \frac{dr_1 \cdots dr_M}{(r_1+\dots+r_M+M t^2)^M}
\\ \nonumber
&\lesssim
\int_{0}^{\ell(Q)^2}\!\!\!\cdots \int_{0}^{\ell(Q)^2}
e^{-c\frac{d(\widehat{E}_1,\widehat{E}_2)^2}{r_1+\dots+r_M+M t^2}}\frac{dr_1 \cdots dr_M}{(r_1+\dots+r_M+M t^2)^M}\|f\|_{L^{p_0}(\widehat{E}_1)}
\\ \nonumber
&
\lesssim
\left(\frac{\ell(Q)^2}{t^2}\right)^M
e^{-c\frac{d(\widehat{E}_1,\widehat{E}_2)^2}{t^2+\ell(Q)^2}}\|f\|_{L^{p_0}(\widehat{E}_1)},
\end{align}
where we have used that  $\{(t^2 L)^M e^{-t^2L}\}_{t>0}\in \mathcal{F}_{\infty}(L^{p_0}\rightarrow L^{p_0})$ since $p_-(L)<p_0<2<p_+(L)$.

On the other hand, setting $\theta_M=(1+M)^{-\frac12}$, for every $x\in C_{ji}$, we have
$$
\left\{(y,t):y\in B(x,\theta_M^{-1}t),\ \theta_M\ell(Q)\leq t<\infty\right\}
\subset \widetilde{E}_1\cup \widetilde{E}_2\cup \widetilde{E}_3,
$$
where
$$
\widetilde{E}_1:
=
\big(2^{j+2}Q_i\setminus 2^{j-1}Q_i\big)
\times
\big[\theta_M \ell(Q),2^{j-2}\theta_M\ell(Q_i)\big],
\qquad
\widetilde{E}_2:
=
2^{j}Q_i\times \big(2^{j-2}\theta_M\ell(Q_i),\infty\big),
$$
and
$$
\widetilde{E}_3:=
\Big(\bigcup_{l\geq j}C_l(Q_i)\Big)\times \big(
2^{j-2}\theta_M \ell(Q_i),\infty\big).
$$
Then we have that
\begin{align*}
III_{ji}
\lesssim w(2^{j+1}Q_i)
\sum_{l=1}^{3}
\left(\iint_{\widetilde{E}_l}|\mathcal{T}_{t}e^{-Mt^2L}B_Q\mm_i(y)|^2\frac{dy\,dt}{t^{n+1}}\right)^{\frac{1}{2}}=:w(2^{j+1}Q_i)
\sum_{l=1}^{3}\widetilde{G}_l.
\end{align*}
At this point we proceed much as in the estimates of $G_1$, $G_2$, and $G_3$. Applying \eqref{AB}, we obtain that
$$
III_{ji}\lesssim \frac{w(2^{j+1}Q_i)}{w(Q_i)}2^{-j\left(2M+\frac{n}{p_0}\right)}
2^{-i(2M+\varepsilon)}
\lesssim
2^{-j\left(2M+\frac{n}{p_0}-\frac{r_wp_0n}{p_-(L)}\right)}
2^{-i(2M+\varepsilon)},
$$
where we have used that $w\in A_{\frac{r_wp_0}{p_-(L)}}$, by the definition of $r_w$ and the fact that $p_-(L)<p_0$, and \eqref{doublingcondition}.
From this and \eqref{IIIi-1}, we conclude that \eqref{choice-M} yields
\begin{align*}
 \big\||\|F_4\||_{\Gamma(\cdot)}\big\|_{L^1(w)}\lesssim \sum_{i\geq 1}2^{-i\varepsilon}+\sum_{i\geq 1}\sum_{j\geq 4}2^{-j\left(2M+\frac{n}{p_0}-\frac{r_wp_0n}{p_-(L)}\right)}
2^{-i(2M+\varepsilon)}\lesssim 1.
 \end{align*}
By this, \eqref{acotacionmoleculasF3}, and  \eqref{acotacionmoleculasII}, we conclude that
$
II\lesssim 1,
$
which, together with \eqref{acotacionmoleculasI}, gives the desired estimate:
$
\|\widehat{S}\mm\|_{L^1(w)}\lesssim 1.
$
\qed

We devote the remaining of this section to proving Proposition \ref{lema:SH-1}.

\subsection{Proof of Proposition \ref{lema:SH-1}, part ($a$)}
Fix $w\in A_{\infty}$, $p\in \mathcal{W}_w(p_-(L),p_+(L))$, $\varepsilon>0$, and $m, M\in \N$ such that $M>\frac{n}{2}\left(r_w-\frac{1}{p_-(L)}\right)$.

 For all $f\in \mathbb{H}_{L,p,\varepsilon,M}^1(w)$, applying Proposition \ref{prop:contro-mol-SF}, we obtain that
\begin{align}\label{SH-HL}
\|\Scal_{m,\hh}f\|_{L^1(w)}\lesssim \|f\|_{\mathbb{H}_{L,p,\varepsilon,M}^1(w)}.
\end{align}
Then, since in particular $f\in L^p(w)$, we conclude that $f\in \mathbb{H}^1_{\Scal_{m,\hh},p}(w)$, and hence 
$
\mathbb{H}^1_{L,p,\varepsilon,M}(w)\subset \mathbb{H}^1_{\Scal_{m,\hh},p}(w).
$

As for proving the converse inclusion,
we shall show that for all
$f\in \mathbb{H}^1_{\Scal_{m,\hh},p}(w)$  we can find a $\p$ of $f$, i.e. $f=\sum_{i=1}^{\infty}\lambda_i\mm_i$,
 such that
$$
\sum_{i=1}^{\infty}|\lambda_i|\lesssim \|\Scal_{m,\hh}f\|_{L^1(w)}.
$$
Following some ideas of \cite[Lemma 4.2]{HofmannMayboroda},
 for each $l\in \Z$ and for some $0<\gamma<1$ to be chosen later, we set
$$
O_l:=\{x\in \mathbb{R}^n:\Scal_{m,\hh}f(x)>2^l\}, \quad E_l^*:=\left\{x\in \mathbb{R}^n: \frac{|E_l\cap B(x,r)|}{|B(x,r)|}\geq \gamma,\, \textrm{for all}\,r>0\right\},
$$
$E_l:=\R^n\setminus O_l,$ and
$
O_l^*:=\R^n\setminus E_l^*=\left\{x\in \mathbb{R}^n: \mathcal{M}(\chi_{O_l})(x)>1-\gamma\right\},
$
where $\mathcal{M}$ is the centered Hardy-Littlewood maximal operator.
We have that $O_l$ and $O_l^*$
are open, and that $O_{l+1}\subseteq O_l$,
 $O_{l+1}^*\subseteq O_{l}^*$, and $O_l\subseteq O_l^*$.
 Besides, since $w\in A_{\infty}$ then $\mathcal{M}:L^r(w)\rightarrow L^{r,\infty}(w)$, for every $r>r_w$. Also, $\|\Scal_{m,\hh}f\|_{L^p(w)}\lesssim \|f\|_{L^p(w)}<\infty$, because $p\in\mathcal{W}_w(p_-(L),p_+(L))$ (see \cite[Theorem 1.12]{MartellPrisuelos}). Hence
    \begin{align}\label{OL}
    w(O^*_l)\leq c_{\gamma,r}w(O_l)\lesssim \frac{1}{2^{lp}}\|\Scal_{m,\hh}f\|_{L^p(w)}^p\lesssim \frac{1}{2^{lp}}\|f\|_{L^p(w)}^p<\infty, \quad \forall\, l\in \Z,
   \end{align}
and $E_l^*$ cannot be empty. Therefore, for each $l$, we can take 
  a Whitney decomposition $\{Q_l^j\}_{j\in \N}$,
  of $O_l^*$:
$$
O_l^*=\bigcup_{j\in \N}Q_l^j,\qquad \qquad\textrm{diam}(Q_l^j)\leq d(Q_l^j,\mathbb{R}^n\setminus O_l^*)\leq 4\textrm{diam}(Q_l^j),
$$
and the cubes $Q_l^j$ have disjoint interiors.
Finally, define, for each $j\in \N$ and $l\in \Z$, the sets
\begin{align}\label{T}
T_l^j:=(Q_l^j\times (0,\infty))\bigcap \left(\widehat{O_l^*}\setminus \widehat{O_{l+1}^{*}}\right),
\end{align}
where $\widehat{O_l^*}:=\left\{(x,t)\in \R_+^{n+1}: d(x,\mathbb{R}^n\setminus O_l^*)\geq t \right\}$ and we note that 
$\R^n\setminus \widehat{O_l^*}=\mathcal{R}(E_l^*)$, (see \eqref{ralphacones}).

Let us show that 
\begin{align}\label{suppT_tf(y)}
\supp T_sf(x)\!:=\!\supp \,(s^2L)^me^{-s^2L}f(x)\subset \!
\left(\bigcup_{l\in \Z}\! \left(\widehat{O^*_l}\setminus\widehat{O^*_{l+1}}\right)\right)\!\bigcup \mathbb{F}_1\!\bigcup \mathbb{F}_2=
\left(\bigcup_{l\in \Z,j\in \N} \!\!T_l^j\right)\bigcup \mathbb{F}_1\!\bigcup \mathbb{F}_2,
\end{align}
where $\mathbb{F}_1:=\bigcap_{l\in \Z}\widehat{O^*_l}$ and $\mathbb{F}_2\subset \R^{n+1}_+\setminus \bigcup_{l\in \Z}\widehat{O^*_l}$ with $\mu(\mathbb{F}_1):=\iint_{\R^{n+1}_+}\chi_{\mathbb{F}_1}(y,s)\frac{dy\,ds}{s}=0=\mu(\mathbb{F}_2)$. The fact that $\mu(\mathbb{F}_1)=0$ follows easily. Indeed, note first
that, by \eqref{OL}, and 
since $O^*_{l+1}\subset O^*_{l}$, we conclude that
$$
w(\cap_{l\in \Z}O^*_l)=\lim_{l\rightarrow \infty}w(O^*_l)
\lesssim \lim_{l\rightarrow \infty}
\frac{1}{2^{lp}}=0.
$$
Consequently $|\cap_{l\in \Z}O^*_l|=0$, since the Lebesgue measure and the measure given by $w$ are mutually absolutely continuous.  Hence, clearly 
\begin{align*}
\mu(\mathbb{F}_1)=
\int_{0}^{\infty}\int_{\R^n}\chi_{\mathbb{F}_1}(x,s)\,\frac{dy\,ds}{s}
\leq \lim_{N\rightarrow \infty}
\int_{N^{-1}}^N|\cap_{l\in \Z}O^*_l|\,\frac{ds}{s}=0.
\end{align*}
Finally let us find $\mathbb{F}_2$, and hence obtain
\eqref{suppT_tf(y)}.
Note that 
\begin{multline*}
\R^{n+1}_+=\left(\bigcup_{l\in \Z} \left(\widehat{O^*_l}\setminus\widehat{O^*_{l+1}}\right)\right)\bigcup
\left(\R^{n+1}_+\setminus \bigcup_{l\in \Z} \left(\widehat{O^*_l}\setminus\widehat{O^*_{l+1}}\right)\right)
\\
=
\left(\bigcup_{l\in \Z} \left(\widehat{O^*_l}\setminus\widehat{O^*_{l+1}}\right)\right)
\bigcup\mathbb{F}_1\bigcup
\left(\R^{n+1}_+\setminus \bigcup_{l\in \Z} \widehat{O^*_l}\right).
\end{multline*}
Then, it suffices to show that 
\begin{align}\label{cero}
T_tf(y)=0, \quad \mu-\textrm{a.e.}\, (y,t)\in\R^{n+1}_+\setminus\bigcup_{l\in \Z} \widehat{O}^*_l.
\end{align}
Consider $\mathbb{F}$ the set of Lebesgue points of $|T_sf(x)|^2$ as a function of the variables $(x,s)\in \R^{n+1}_+$ for the measure $dxds$ which is mutually absolutely continuous with respect to $\mu$. Note that $\|\Scal_{m,\hh}f\|_{L^p(w)}<\infty$
implies that
$|T_sf(x)|^2\in L^1_{loc}(\R^{n+1}_+,dx\,ds)$, and hence
$\mu(\R^{n+1}_+\setminus \mathbb{F})=0$. 
 To conclude \eqref{cero}, we observe that $\R^{n+1}_+\setminus\left(\bigcup_{l\in \Z} \widehat{O^*_l}\right)=\bigcap_{l\in \Z}\mathcal{R}(E^*_l)$ (recall the definition of $\mathcal{R}(E^*_l)$ in \eqref{ralphacones}), and then we just need to prove that
\begin{align}\label{cero-2}
T_tf(y)=0,\quad \forall (y,t)\in \bigcap_{l\in \Z}\mathcal{R}(E^*_l)\cap  \mathbb{F}.
\end{align}
On the one hand, if $(y,t)\in \bigcap_{l\in \Z}\mathcal{R}(E^*_l)$, for  every $l\in \Z$ there exists $x_l$ such that $(y,t)\in \Gamma(x_l)$ and $\Scal_{m,\hh}f(x_l)\leq 2^l$. On the other hand,  $(y,t)\in \mathbb{F}$ implies
\begin{align}\label{convergence-L}
\lim_{r\rightarrow 0}\frac{1}{|B((y,t),r)|}\iint_{B((y,t),r)}
\left||T_tf(y)|^2-|T_sf(x)|^2\right|dx\,ds=0.
\end{align}
Given $r>0$, consider
$$ 
x_l^r:=\begin{cases}
x_l\quad \textrm{if} \quad y=x_l\\
y-\frac{r(y-x_l)}{2|y-x_l|}\quad \textrm{if} \quad y\neq x_l,\,
\end{cases} 
$$
it is easy to see that $B\left((x_l^r,t),\frac{r}{4}\right)\subset \Gamma(x_l)\cap B((y,t),r)$, for all $l\in \Z$ and $0<r<t$.
 Combining all these facts we have that, for $(y,t)\in \bigcap_{l\in \Z}\mathcal{R}(E^*_l)\cap  \mathbb{F}$,
\begin{align*}
|T_tf(y)|^2&=\frac{1}{|B((x_l^r,t),r/4)|}\iint_{B((x_l^r,t),r/4)}\left||T_tf(y)|^2-|T_sf(x)|^2\right|\,dx\,ds
\\&\qquad\quad
+
\frac{1}{|B((x_l^r,t),r/4)|}\iint_{B((x_l^r,t),r/4)}|T_sf(x)|^2\,dx\,ds
\\&
\lesssim
\frac{1}{|B((y,t),r)|}\iint_{B((y,t),r)}\left||T_tf(y)|^2-|T_sf(x)|^2\right|dx\,ds
+
\frac{(t+r)^{n+1}}{r^{n+1}}4^l.
\end{align*}
Then, letting first $l\rightarrow -\infty$ and then  $r\rightarrow 0$,  we conclude \eqref{cero-2} by \eqref{convergence-L}.

Now consider 
the following Calder{\'o}n reproducing formula for $f\in L^p(w)$:
\begin{align}\label{formula-CRF-Lpw}
f(x)=\widetilde{C}\int_{0}^{\infty}\left((t^2L)^me^{-t^2L}\right)^{M+2}f(x)\frac{dt}{t}
=\widetilde{C}\lim_{N\rightarrow \infty}
\int_{N^{-1}}^{N}\left((t^2L)^me^{-t^2L}\right)^{M+2}
f(x)\frac{dt}{t},
\end{align}
with the integral converging in $L^p(w)$.

\begin{remark}\label{remark:calderonreproducing}
A priori, by $L^2(\R^n)$ functional calculus, we have the above equalities for functions in $L^2(\R^n)$. Here we explain how to extend them to functions in $L^p(w)$ for all $p\in \mathcal{W}_w(p_-(L),p_+(L))$. Fixing such a $p$, we first introduce the operator $\mathcal{T}_{t,L}^M= ((t^2 L)^m e^{-t^2 L})^{M+1}$, $M\ge 0$, whose adjoint (in $L^2(\R^n)$) is $(\mathcal{T}_{t, L}^M)^*= ((t^2 L^*)^m e^{-t^2 L^*})^{M+1}=\mathcal{T}_{t, L^*}^M$, and set $\mathcal{Q}_L^M f(x,t)= \mathcal{T}_{t, L^*}^Mf(x)$ for $(x,t)\in \mathbb{R}^{n+1}_+$ and $f\in L^2(\R^n)$.
Since $p\in \mathcal{W}_w(p_-(L),p_+(L))$ then $p'\in \mathcal{W}_{w^{1-p'}}(p_-(L^*),p_+(L^*))$  by \cite[Lemma 4.4]{AuscherMartell:I} and the fact that $p_{\pm}(L^*)=p_{\mp}(L)'$ in \cite{Auscher}. Thus,  the vertical square function defined by $\mathcal{T}_{t, L^*}^M$ is bounded on $L^{p'}(w^{1-p'})$ (see \cite{AuscherMartell:III}). Writing $\mathbb{H}=L^2\left((0,\infty),\frac{dt}{t}\right)$, we obtain 
\begin{multline}\label{eq:QLM-SF}
\left\|\mathcal{Q}_L^M h\right\|_{L^{p'}_{\mathbb{H}}(w^{1-p'})}
=
\big\|\,\|\mathcal{Q}_L^M h\|_{\mathbb{H}}\big\|_{L^{p'}(w^{1-p'})}
\\
=
\left(\int_{\R^n}\left(\int_0^{\infty}|\mathcal{T}_{t, L^*}^M h(x)|^2\frac{dt}{t}\right)^{\frac{p'}{2}}w(x)^{1-p'}dx\right)^{\frac{1}{p'}}
\lesssim 
\|h\|_{L^{p'}(w^{1-p'})}.
\end{multline}
Therefore, $\mathcal{Q}_L^M:L^{p'}(w^{1-p'})\rightarrow L^{p'}_{\mathbb{H}}(w^{1-p'})$ and hence its adjoint $(\mathcal{Q}_L^M)^*$ is bounded from $L^{p}_{\mathbb{H}}(w)$ to $L^{p}(w)$ (see also \cite{Auscher,AuscherMartell:III}). Moreover, for $h\in L^{2}_{\mathbb{H}}(\R^n)$ and $f\in L^2(\R^n)$, we have that
$$
\langle (\mathcal{Q}_L^M)^* h, f \rangle_{L^2(\R^n)}
=
\langle h, \mathcal{Q}_L^M  f \rangle_{L^2_{\mathbb{H}}(\R^n)}
=
\int_{\mathbb{R}^n}\int_0^\infty h(y,t) \overline{(\mathcal{T}_{t, L}^M)^*f(y)}\frac{dt}{t}dy
=
\int_{\mathbb{R}^n}\int_0^\infty \mathcal{T}_{t, L}^M h(y,t) \frac{dt}{t} \overline{f(y)} dy,
$$
where it is implicitly understood that $\mathcal{T}_{t, L}^M h(y,t)=\mathcal{T}_{t, L}^M \big(h(\cdot,t)\big)(y)$. Consequently, for every $h\in {L^2_{\mathbb{H}}(\R^n)}$,
$$
(\mathcal{Q}_L^M)^*h(x)
=
\int_0^{\infty} \mathcal{T}_{t, L}^M h(x,t)\frac{dt}{t}
=
\int_0^{\infty} ((t^2 L)^m e^{-t^2 L})^{M+1} h(x,t)\frac{dt}{t}.
$$
Note that $\widetilde{C}(\mathcal{Q}_L^M)^*  \mathcal{Q}_{L^*}^0 f=f$ for every $f\in L^2(\R^n)$, where according to the notation introduced above $\mathcal{Q}_{L^*}^0f(x,t)= \mathcal{T}_{t, L}^0f(x)=(t^2 L)^m e^{-t^2 L}f(x)$. On the other hand, for every $f\in  L^p(w)$ and $g\in L^2(\R^n)\cap L^p(w)$ we have that
\begin{multline*}
\big\|f- \widetilde{C}(\mathcal{Q}_L^M)^*  \mathcal{Q}_{L^*}^0 f\big\|_{L^p(w)}
\le
\|f-g\|_{L^p(w)}+ \widetilde{C}\big\|(\mathcal{Q}_L^M)^*  \mathcal{Q}_{L^*}^0 (g- f)\big\|_{L^p(w)}
\\
\lesssim
\|f-g\|_{L^p(w)}+ \|\mathcal{Q}_{L^*}^0(g-f)\|_{L_{\mathbb{H}}^p(w)}
\lesssim
\|f-g\|_{L^p(w)}
\end{multline*}
where we have used the boundedness of $(\mathcal{Q}_L^M)^*$ along with the fact that $\mathcal{Q}_{L^*}^0$ is bounded from $L^p(w)$ to $L_{\mathbb{H}}^p(w)$, the latter follows as in \eqref{eq:QLM-SF} with $L^p(w)$ in place of $L^{p'}(w^{1-p'})$ since $p\in \mathcal{W}_w(p_-(L),p_+(L))$.  Using now that $L^2(\R^n)\cap L^p(w)$ is dense in $L^p(w)$ we easily conclude that $f=\widetilde{C}(\mathcal{Q}_L^M)^*  \mathcal{Q}_{L^*}^0 f$ for every $f\in L^p(w)$. This is the first equality in \eqref{formula-CRF-Lpw}.

To obtain the second equality in \eqref{formula-CRF-Lpw} we write $I_N=[N^{-1}, N]$ and observe that for every $h\in L_{\mathbb{H}}^p(w)$, one has that $\chi_{I_{N}}h\longrightarrow h$ in $L_{\mathbb{H}}^p(w)$ as $N\to\infty$, and therefore $(\mathcal{Q}_L^M)^*(\chi_{I_{N}}h)\longrightarrow (\mathcal{Q}_L^M)^*h$ in $L^p(w)$ as $N\to\infty$. Taking now $f\in L^p(w)$, as mentioned above, $\mathcal{Q}_{L^*}^0 f\in L_{\mathbb{H}}^p(w) $ and it follows that $(\mathcal{Q}_L^M)^*(\chi_{I_{N}}\mathcal{Q}_{L^*}^0 f)\longrightarrow (\mathcal{Q}_L^M)^*(\mathcal{Q}_{L^*}^0 f)$ on $L^p(w)$, which is what we were seeking to prove.
\end{remark}

Once we have justified the Calder{\'o}n reproducing formula \eqref{formula-CRF-Lpw} we use 
\eqref{suppT_tf(y)} to obtain
that
\begin{multline}\label{Calderonformula}
f(x)=\widetilde{C}\int_{0}^{\infty}\left((t^2L)^me^{-t^2L}\right)^{M+1}
\Big(\sum_{j\in \N, l\in \Z}\chi_{T_l^j}(\cdot,t)(t^2L)^me^{-t^2L}f(\cdot)\Big)(x)\frac{dt}{t}
\\
=\widetilde{C}\lim_{N\rightarrow \infty}
\int_{N^{-1}}^{N}\left((t^2L)^me^{-t^2L}\right)^{M+1}
\Big(\sum_{j\in \N, l\in \Z}\chi_{T_l^j}(\cdot,t)(t^2L)^me^{-t^2L}f(\cdot)\Big)(x)\frac{dt}{t},
\end{multline}
in $L^p(w)$.
Now, set
$$
\lambda_{l}^j:=2^lw(Q_l^j)\quad \textrm{ and } \quad \mm_{l}^{j}(x):=\frac{1}{\lambda_{l}^j}
\int_{0}^{\infty}\left((t^2L)^me^{-t^2L}\right)^{M+1}
\left({f}_{l,m}^j(\cdot,t)\right)(x)\frac{dt}{t},
$$
where ${f}_{l,m}^j(x,t):=\chi_{T_l^j}(x,t)(t^2L)^me^{-t^2L}f(x)$.
We will show that
\begin{align}\label{fNp}
\sum_{j\in \N, l\in \Z}\widetilde{C}\lambda_l^j\mm_{l}^{j}\quad
\textrm{is a}\, \p\, \textrm{of} \, f.
\end{align}

We start showing that, there exists a uniform constant $C_0$, such that $C_0^{-1}\mm_{l}^{j}$ is a $\mol$, for all $j\in \N$ and $l\in \Z$. To this end, we estimate, for all $0\leq k\leq M$, $1\leq i$, $j\in \N$, and $l\in \Z$, the $L^p(w)$ norms of the functions $(\ell(Q_l^j)^2L)^{-k}\mm_{l}^{j}\,\chi_{C_i(Q_l^j)}$. Before that, we set 
$$
\mathfrak{R}^{\ell(Q_l^j)}(E_{l+1}^*)
:=
\big\{(y,t)\in \mathcal{R}(E_{l+1}^*):y\in Q_l^j,\,0<t\leq 5\sqrt{n}\ell(Q_l^j)\big\}.
$$
For all $(y,t)\in T_l^j$ we have that
$$
t\leq d(y,\R^n\setminus O^*_l)\leq d(Q_l^j,\R^n\setminus O^*_l)+
\textrm{diam}(Q_l^j)\leq 5\textrm{diam}(Q_l^j),
$$
and thus
\begin{align}\label{TcontainedinR}
T_l^j\subset \mathfrak{R}^{\ell(Q_l^j)}(E_{l+1}^*).
\end{align}
 Then,
 for all $(y,t)\in T_l^j$ and $c=11\sqrt{n}$,
\begin{align}\label{ballincube}
B(y,t)\subset cQ_l^j.
\end{align}
Now, by definition of $T_l^j$, we have that for every $(y,t)\in T_l^j$ there exists $y_0\in E_{l+1}^*$ such that $|E_{l+1}\cap B(y_0,t)|\geq \gamma |B(y_0,t)|$ and $|y_0-y|<t$. Besides,  considering $z:=y-\frac{t(y -y_0)}{2|y-y_0|}$, we have that $B\left(z,\frac{t}{2}\right)\subset B(y_0,t)\cap B(y,t)$. Consequently,
\begin{multline*}
\gamma |B(y_0,t)|\leq |E_{l+1}\cap B(y_0,t)|\leq |E_{l+1}\cap B(y,t)|+ |B(y_0,t)\setminus B(y,t)|
\\
\leq |E_{l+1}\cap B(y,t)|+ \left|B(y_0,t)\setminus B\left(z,\frac{t}{2}\right)\right|
= |E_{l+1}\cap B(y,t)|+ |B(y_0,t)|\left(1-\frac{1}{2^n}\right). \end{multline*}
Then, for $\gamma=1-\frac{1}{2^{n+1}}$, we obtain
\begin{align}\label{star}
t^n\lesssim |E_{l+1}\cap B(y ,t)|.
\end{align}

We are now ready to consider the case $i=1$. For every $t>0$, let $\mathcal{T}_t:=(t^2L)^{mM+m-k}e^{-t^2(M+1)L}$, and for every  $h\in L^{p'}(w^{1-p'})$ write $\mathcal{Q}_Lh(x,t):=\mathcal{T}_t^*h(x)$, with $(x,t)\in \R^{n+1}_+$. As in Remark \ref{remark:calderonreproducing} 
one can show that $\mathcal{Q}_L:L^{p'}(w^{1-p'})\rightarrow L^{p'}_{\mathbb{H}}(w^{1-p'})$, since 
$p'\in \mathcal{W}_{w^{1-p'}}(p_-(L^*),p_+(L^*))$. Hence its adjoint $\mathcal{Q}_L^*$ has a bounded extension from $L^{p}_{\mathbb{H}}(w)$ to $L^{p}(w)$, where
$$
\mathcal{Q}_L^*h(x)
=
\int_0^{\infty} \mathcal{T}_t h(x,t)\frac{dt}{t}
=
\int_0^{\infty}(t^2L)^{mM+m-k}e^{-t^2(M+1)L}h(x,t)\frac{dt}{t}.
$$
Here, as before, $\mathcal{T}_t h(x,t)=\mathcal{T}_t\big(h(\cdot,t)\big)(x)$, for $(x,t)\in\R_+^{n+1}$.
Write $\widetilde{g}(x,t):=t^{2k}{f}_{l,m}^j(x,t)$ and
$$
\mathcal{I}:=\big\{h\in L^{p'}(w^{1-p'}): \|h\|_{L^{p'}(w^{1-p'})}=1,\,\supp h\subset 4Q_l^j\big\},
$$
From \eqref{ballincube}, \eqref{star}, and \eqref{TcontainedinR} we obtain
\begin{align}\label{estimatemoleculei=1}
&\left\|((\ell(Q_l^j)^2L)^{-k}\mm_{l}^{j})\chi_{4Q_l^j}\right\|_{L^p(w)}
=
\frac{\ell(Q_l^j)^{-2k}}{\lambda^{j}_l} 
\left\|
\int_{0}^{\infty} (t^2L)^{mM+m-k}e^{-t^2(M+1)L}\widetilde{g}(\cdot,t)
\frac{dt}{t}\right\|_{L^p(4Q_{l}^j,w)}
\\
\nonumber&\qquad\qquad=
\frac{\ell(Q_l^j)^{-2k}}{\lambda^{j}_l} 
\sup_{h\in \mathcal{I}}\left|\int_{\R^n}\mathcal{Q}_L^*\widetilde{g}(y)\cdot h(y)\, dy
\right|
 \\
\nonumber&\qquad\qquad
=
\frac{\ell(Q_l^j)^{-2k}}{\lambda^{j}_l} 
\sup_{h\in \mathcal{I}}\left|\int_{\R^n}\int_0^{\infty}\widetilde{g}(y,t)\cdot \mathcal{T}_t^*h(y)\, \frac{dt\,dy}{t}
\right|
\\ \nonumber
 &\qquad\qquad
\lesssim 
\frac{\ell(Q_l^j)^{-2k}}{\lambda^{j}_l} 
\sup_{h\in \mathcal{I}}\iint_{T_l^j}t^{2k}\left|
(t^2L)^me^{-t^2L}f(y)\cdot \mathcal{T}_t^*h(y)\right|
\int_{B(y,t)\cap E_{l+1}}dx\frac{dt}{t^{n+1}}dy
\\ \nonumber
&\qquad\qquad
\lesssim 
\frac{1}{\lambda^{j}_l} 
\sup_{h\in \mathcal{I}}\int_{cQ_l^j\cap
E_{l+1}}\iint_{\Gamma(x)} \left|
(t^2L)^me^{-t^2L}f(y)\cdot \mathcal{T}_t^*h(y)\right|
\frac{dy \, dt}{t^{n+1}}dx
\\ \nonumber
&\qquad\qquad
\leq 
\frac{1}{\lambda^{j}_l} 
\left\| \Scal_{m,\hh} f\right\|_{L^{p}(cQ_l^j\cap
E_{l+1},w)} \sup_{h\in \mathcal{I}}\left\|\left|\!\left|\!\left|
\mathcal{T}_t^*h\right|\!\right|\!\right|_{\Gamma(\cdot)}\right\|_{L^{p'}(w^{1-p'})}
\\\nonumber
&
\qquad\qquad
\le
\frac{1}{\lambda^{j}_l} w(Q_l^j)^{\frac{1}{p}}2^l \sup_{h\in \mathcal{I}}\left\|\left|\!\left|\!\left|
\mathcal{T}_t^*h\right|\!\right|\!\right|_{\Gamma(\cdot)}\right\|_{L^{p'}(w^{1-p'})}
\\\nonumber
&
\qquad\qquad
=
w(Q_l^j)^{\frac{1}{p}-1}\sup_{h\in \mathcal{I}}\left\|\left|\!\left|\!\left|
\mathcal{T}_t^*h\right|\!\right|\!\right|_{\Gamma(\cdot)}\right\|_{L^{p'}(w^{1-p'})}
,
\end{align}
where in the last inequality we have used that $\Scal_{m,\hh}f(x)\leq 2^{l+1}$ for every  $x\in  E_{l+1}$. To estimate the term with the sup we fix $h\in \mathcal{I}$ and note that changing variable $t$ into $\frac{t}{\sqrt{M+1}}$ and using \cite[Proposition 3.29]{MartellPrisuelos}
\begin{multline*}
\left\|\left|\!\left|\!\left|
\mathcal{T}_t^*h\right|\!\right|\!\right|_{\Gamma(\cdot)}\right\|_{L^{p'}(w^{1-p'})}
=
C_M
\bigg\|\Big|\!\Big|\!\Big|
\mathcal{T}_{\frac{t}{\sqrt{M+1}}}^*h\Big|\!\Big|\!\Big|_{\Gamma^{\frac1{\sqrt{M+1}}}(\cdot)}\bigg\|_{L^{p'}(w^{1-p'})}
\lesssim
C_M
\bigg\|\Big|\!\Big|\!\Big|
\mathcal{T}_{\frac{t}{\sqrt{M+1}}}^*h\Big|\!\Big|\!\Big|_{\Gamma(\cdot)}\bigg\|_{L^{p'}(w^{1-p'})}
\\
=
\left\|\left(\iint_{\Gamma(\cdot)}\left|(t^2L^*)^{mM+m-k}e^{-t^2L^*}h(y)\right|^2\frac{dy\,dt}{t^{n+1}}\right)^{\frac{1}{2}}\right\|_{L^{p'}(w^{1-p'})}
\lesssim
\|h\|_{L^{p'}(w^{1-p'})}=1,
\end{multline*}
where last estimate holds since $p'\in \mathcal{W}_{w^{1-p'}}(p_-(L^*),p_+(L^*))$ (see \cite{MartellPrisuelos}). Plugging this into \eqref{estimatemoleculei=1} we conclude that
\begin{align}\label{molecula1}
\left\|\left((\ell(Q_l^j)^2L)^{-k}\mm_l^j\right)\chi_{4Q_l^j}\right\|_{L^p(w)}\lesssim  w(Q_l^j)^{\frac{1}{p}-1}.
\end{align}
Consider now  $i\geq 2$. Note that $w\in RH_{\left(\frac{p_+(L)}{p}\right)'}$ implies that 
$w\in RH_{\left(\frac{q_0}{p}\right)'}$ for some $q_0$ with $\max\{2,p\}<q_0<p_+(L)$. Then,
\begin{align*}
&\left\|\left((\ell(Q_l^j)^2L)^{-k}\mm_{l}^{j}\right)\chi_{C_i(Q_l^j)}\right\|_{L^p(w)}
\\&\qquad
 \leq \frac{1}{\lambda^{j}_l} \left\|\int_{0}^{\infty}\left|(\ell(Q_l^j)^2L)^{-k}\left((t^2L)^me^{-t^2L}\right)^{M+1}
\left({f}_{l,m}^j(\cdot,t)\right)\right|
\frac{dt}{t}\right\|_{L^p(C_i(Q_l^j),w)}
\\&\qquad
\lesssim \frac{\ell(Q_l^j)^{-2k}}{\lambda^{j}_l}w(2^{i+1}Q_l^j)^{\frac{1}{p}} |2^{i+1}Q_l^j|^{-\frac{1}{q_0}}
\left\|\int_{0}^{\infty}\left|t^{2k}(t^2L)^{mM+m-k}e^{-t^2(M+1)L}
\left({f}_{l,m}^j(\cdot,t)\right)\right|
\frac{dt}{t}\right\|_{L^{q_0}(C_i(Q_l^j))}.
 \end{align*}
Applying Minkowski's inequality, the fact that $\{(t^2L)^me^{-t^2L}\}_{t>0}\in \mathcal{F}_{\infty}(L^2\rightarrow L^{q_0})$, and \eqref{TcontainedinR} we obtain the following estimate for the last integral above:
\begin{align*}
&\left\|\int_{0}^{\infty}\left|t^{2k}(t^2L)^{mM+m-k}e^{-t^2(M+1)L}
\left({f}_{l,m}^j(\cdot,t)\right)\right|
\frac{dt}{t}\right\|_{L^{q_0}(C_i(Q_l^j))}
\\
&\quad \leq\int_{0}^{\infty}t^{2k}\left\|
(t^2L)^{mM+m-k}e^{-t^2(M+1)L}
\left({f}_{l,m}^j(\cdot,t)\right)\right\|_{L^{q_0}(C_i(Q_l^j))}
\frac{dt}{t}
\\
&\quad \lesssim \int_{0}^{5\sqrt{n}\ell(Q_l^j)}\left(\int_{\mathbb{R}^n}
\chi_{Q_l^j}(y)\left| {f}_{l,m}^j(y,t)\right|^{2}dy
\right)^{\frac{1}{2}}t^{2k}t^{-n\left(\frac{1}{2}-\frac{1}{q_0}\right)}e^{-c\frac{4^i\ell(Q_l^j)^2}{t^2}}
\frac{dt}{t}
\\
&\quad \lesssim \ell(Q_l^j)^{2k}\left( \iint_{T_l^j}
\left|
(t^2L)^me^{-t^2L}f(y)\right|^{2}\frac{dy\,dt}{t} \right)^{\frac{1}{2}} \left(\int_{0}^{5\sqrt{n}\ell(Q_l^j)}t^{-2n\left(\frac{1}{2}-\frac{1}{q_0}\right)}
e^{-c\frac{4^i\ell(Q_l^j)^2}{t^2}}
\frac{dt}{t}\right)^{\frac{1}{2}}
\\
&\quad =:II_1\times II_2.
\end{align*}
For $II_1$, we proceed as in the estimate of $I_1$ and obtain after invoking \eqref{star} 
\begin{multline*}
II_1\lesssim
\ell(Q_l^j)^{2k}\left(\iint_{T_l^j}
\left|
(t^2L)^me^{-t^2L}f(y)\right|^{2}\int_{B(y,t)\cap E_{l+1}}dx \, dy \frac{dt}{t^{n+1}}\right)^{\frac{1}{2}}
\\
\lesssim \ell(Q_l^j)^{2k}\left\|\Scal_{m,\hh}f\right\|_{L^2(cQ_l^j\cap
E_{l+1})}
\lesssim \ell(Q_l^j)^{2k}|Q_l^j|^{\frac{1}{2}}2^l.
\end{multline*}
As for $II_2$, changing the variable $t$ into $\frac{2^i\ell(Q_l^j)}{t}$ we get
\begin{align*}
II_2
&\lesssim (2^i\ell(Q_l^j))^{-n\left(\frac{1}{2}-\frac{1}{q_0}\right)}e^{-c4^i}\left(\int_{0}^{\infty}t^{2n\left(\frac{1}{2}-\frac{1}{q_0}\right)}
e^{-ct^2}
\frac{dt}{t}\right)^{\frac{1}{2}}\lesssim (2^i\ell(Q_l^j))^{-n\left(\frac{1}{2}-\frac{1}{q_0}\right)}e^{-c4^i}.
\end{align*}
Hence, for $i\geq 2$, using \eqref{doublingcondition},
\begin{align*}
\left\|\left((\ell(Q_l^j)^2L)^{-k}\mm_{l}^{j}\right)\chi_{C_i(Q_l^j)}\right\|_{L^p(w)}
 \lesssim \frac{1}{\lambda_l^j}e^{-c4^i}2^{-\frac{in}{2}}2^lw(2^{i+1}Q_l^j)^{\frac{1}{p}}
\lesssim
e^{-c4^i}w(2^{i+1}Q_l^j)^{\frac{1}{p}-1}.
\end{align*}
From this and \eqref{molecula1}, we infer that there exists a constant $C_0>0$ such that, for all $j\in \N$ and $l\in \Z$,
$
\|\mm_{l}^{j}\|_{mol,w}\leq C_0.
$
Therefore, for every $j\in \N$ and $l\in \Z$, we have that $C_0^{-1}\mm_{l}^{j}$ are $\mol$s associated with the cubes 
$Q_l^j$.

\medskip

Let us  now prove that $\{\lambda_l^j\}_{j\in \N,l\in \Z}\in \ell^1$. Since for each $l\in \Z$, $\{Q_j^l\}_{j\in \N}$ is a Whitney decomposition of $O_l^*$, by \eqref{OL}, and since $f\in \mathbb{H}_{\Scal_{m,\hh},p}^1(w)$,  we obtain
\begin{multline}\label{in-l1}
\sum_{j\in \N, l\in \Z}|\lambda_l^j|
=\sum_{j\in \N, l\in \Z}2^lw(Q_l^j)=\sum_{l\in \Z}2^lw(O_l^*)
\lesssim\sum_{l\in \Z}2^lw(O_l)
\lesssim \sum_{l\in \Z}\int_{2^{l-1}}^{2^{l}}w(O_l)d\lambda
\\
\leq C\int_{0}^{\infty}w(\{x\in \R^n:\Scal_{m,\hh} f(x)>\lambda\})d\lambda
=C \|\Scal_{m,\hh}f\|_{L^1(w)}<\infty.
\end{multline}

\medskip

Thus to conclude \eqref{fNp}, we finally  show that 
\begin{align}\label{pLP}
f=\sum_{j\in \N, l\in \Z}\widetilde{C}\lambda_l^j\mm_{l}^{j} \quad \textrm{in} \quad L^p(w).
\end{align}
Using the notation in Remark \ref{remark:calderonreproducing}, recalling that $f_{l,m}^j(x,t)=\chi_{T_l^j}(x,t)(t^2L)^me^{-t^2L}f(x)$ where the sets $\{T_l^j\}_{j\in \N, l\in \Z}$ are pairwise disjoint, it follows that 
$$
\Bigg\|\sum_{j\in \N, l\in \Z} f_{l,m}^j\Bigg\|_{L_\mathbb{H}^p(w)}
=
\Bigg\|\sum_{j\in \N, l\in \Z} |f_{l,m}^j|\Bigg\|_{L_\mathbb{H}^p(w)}
\le
\Bigg\|\Big(\int_0^\infty |(t^2L)^me^{-t^2L}f|^2\frac{dt}{t}\Big)^\frac12\Bigg\|_{L^p(w)}
\lesssim
\|f\|_{L^p(w)}.
$$
Hence, by \eqref{Calderonformula}, Remark \ref{remark:calderonreproducing} and the dominated convergence theorem we obtain
\begin{multline}\label{eqn:repre-Lpw-just}
\left\|f-\sum_{j+|l|\leq K}\widetilde{C}\lambda_{l}^j\mm_{l}^{j}\right\|_{L^p(w)}
=
\widetilde{C}\left\| (\mathcal{Q}_L^M)^*\Bigg(\sum_{j+|l|>0}f_{l,m}^j\Bigg)-\sum_{j+|l|\leq K}(\mathcal{Q}_L^M)^* f_{l,m}^j\right\|_{L^p(w)}
\\
=
\widetilde{C}\left\|(\mathcal{Q}_L^M)^*\Bigg(\sum_{j+|l|>K}f_{l,m}^j\Bigg)
\right\|_{L^p(w)}
\lesssim
\left\|\sum_{j+|l|>K}f_{l,m}^j
\right\|_{L_{\mathbb{H}}^p(w)}
\longrightarrow 0, \qquad\mbox{as } K\to \infty.
\end{multline}
This proves \eqref{pLP} and therefore, $\sum_{j+|l|>0}\lambda_l^j\mm_l^j$ is a $\p$ of $f$ such that 
$$
\sum_{j+|l|>0}|\lambda_l^j|\lesssim \|\Scal_{m,\hh}f\|_{L^1(w)}.
$$
Consequently, $f\in \mathbb{H}^1_{L,p,\varepsilon,M}(w)$ and
$
\|f\|_{\mathbb{H}_{L,p,\varepsilon,M}^1(w)}\lesssim \|\Scal_{m,\hh}f\|_{L^1(w)},
$
which completes the proof. \qed

%%%%%%%%%%%%%%%%%%%%%%%%%%%%%%%%%%%%%%%%%%%%%%%%%%%%%%%%%%%%%
%%%%%%%%%%%%%%%%%%%%%%%%%%%%%%%%%%%%%%%%%%%%%%%%%%%%%%%%%%%%%

\subsection{Proof of Proposition \ref{lema:SH-1}, part ($b$)}
Fix $w\in A_{\infty}$, $p,q\in \mathcal{W}_w(p_-(L),p_+(L))$, $\varepsilon>0$, and $m,M\in \N$
such that $M>\frac{n}{2}\left(r_w-\frac{1}{p_-(L)}\right)$.

For $f\in \mathbb{H}^1_{\Scal_{m,\hh},p}(w)$ consider the 
$\p$ of $f$, ($f=\sum_{j+|l|>0}\lambda_l^j\mm_l^j$)
obtained in the proof of Proposition \ref{lema:SH-1}, part ($a$). Then, define for each $N\in \N$
$
f_N:=\sum_{0<j+|l|\leq N}\lambda_l^j\mm_l^j.
$
We have that, for each $N\in \N$, $f_N, f-f_N\in \mathbb{H}^1_{L,p,\varepsilon,M}(w)=\mathbb{H}^1_{\Scal_{m,\hh},p}(w)$. Moreover, since
$\sum_{j+|l|>N+1}\lambda_l^j\mm_l^j$ is a $\p$ of $f-f_N$, we have
\begin{align*}
\|\Scal_{m,\hh}(f-f_N)\|_{L^1(w)}=\|f-f_N\|_{\mathbb{H}^1_{\Scal_{m,\hh},p}(w)}
\lesssim\|f-f_N\|_{\mathbb{H}^1_{L,p,\varepsilon,M}(w)}\leq \sum_{j+|l|>N+1}|\lambda_l^j|\mathop{\longrightarrow}\limits_{N\rightarrow \infty} 0.
\end{align*}
Consequently in order to conclude that $f\in H^1_{\Scal_{m,\hh},q}(w)$, it is enough to show that, for each $N\in \N$, $f_N\in \mathbb{H}^1_{\Scal_{m,\hh},q}(w)$, or equivalently that $f_N\in \mathbb{H}^1_{L,q,\varepsilon,M}(w)$. Let us see the latter,  
for every $N$,  following the same computations done in the proof of part ($a$) to show that the $\mm_l^j$ are $\mol$s, but replacing the $L^p(w)$ norm with the $L^q(w)$ norm, we obtain that, for all $i,j\in \N$, $l\in \Z$, and $0\leq k\leq M$,
$$
\|(\ell(Q_l^j)^2L)^{-k}\mm_l^j\|_{L^q(C_i(Q_l^j),w)}\lesssim e^{-c4^i}w(2^{i+1}Q_l^j)^{\frac{1}{q}-1}.
$$
Hence,  $\mm_l^j$ is a multiple of a $(w,q,\varepsilon,M)-$molecule.
Besides, using \eqref{doublingcondition},
\begin{multline*}
\|f_N\|_{L^q(w)}\lesssim \sum_{i\geq 1}\sum_{0<j+|l|\leq N}|\lambda_l^j|\,\|\mm_l^j\|_{L^q(C_i(Q_l^j),w)}
\lesssim
\sum_{i\geq 1}\sum_{0<j+|l|\leq N}|\lambda_l^j|\, e^{-c4^i}w(2^{i+1}Q_l^j)^{\frac{1}{q}-1}
\\
\lesssim 
\sum_{0<j+|l|\leq N}2^lw(Q_l^j)^{\frac{1}{q}}\lesssim \delta_N^{\frac{1}{q}-1}\sum_{0<j+|l|\leq N}2^lw(Q_l^j)\lesssim \delta_N^{\frac{1}{q}-1} \|\Scal_{m,\hh}f\|_{L^1(w)}<\infty.
\end{multline*}
where $\delta_N:=\min_{0< j+|l|\leq N}w(Q_l^j)$.
Then, for each $N\in \N$, we have that the function $
\sum_{0<j+|l|\leq N}\lambda_l^j\mm_l^j
$ is a $(w,q,\varepsilon,M)-$representation of $f_N$. Hence,
$\{f_N\}_{N\in \N}\subset \mathbb{H}^1_{L,q,\varepsilon,M}(w)=\mathbb{H}^1_{\Scal_{m,\hh},q}(w)$.
\qed

%%%%%%%%%%%%%%%%%%%%%%%%%%%%%%%%%%%%%%%%%%%%%%%%%%%%%%%%%%%%%%%%%
%%%%%%%%%%%%%%%%%%%%%%%%%%%%%%%%%%%%%%%%%%%%%%%%%%%%%%%%%%%%%%%%%%
\subsubsection{Proof of Proposition \ref{lema:SH-1}, part ($c$)}
%%%%%%%%%%%%%%%%%%%%%%%%%%%%%%%%%%%%%%%%%%%%%%%%%%%%%%%%%%%%%%%%%
%%%%%%%%%%%%%%%%%%%%%%%%%%%%%%%%%%%%%%%%%%%%%%%%%%%%%%%%%%%%%%%%%%
For  $f \in \mathbb{H}_{\Gcal_{m,\hh},p}^1(w)$, applying Lemma \ref{lema:comparacion,SH-GH,SK-GK}, part ($a$), and the fact that $\Grm_{m,\hh}f(x)\leq \Gcal_{m,\hh}f(x)$ for every $x\in \R^n$ and for every $m\in \N_0$, we conclude
$$
\|\Scal_{m+1,\hh}f\|_{L^1(w)}\lesssim 
\|\Grm_{m,\hh}f\|_{L^1(w)}\leq \|\Gcal_{m,\hh}f\|_{L^1(w)}.
$$
This and part ($a$) of Proposition \ref{lema:SH-1} imply
$$
\mathbb{H}_{\Gcal_{m,\hh},p}^1(w)\subset 
\mathbb{H}_{\Grm_{m,\hh},p}^1(w)
\subset
\mathbb{H}_{\Scal_{m+1,\hh},p}^1(w)=\mathbb{H}_{L,p,\varepsilon,M}^1(w).
$$
To finish the proof, take $f\in \mathbb{H}_{L,p,\varepsilon,M}^1(w)$. Then, by Proposition \ref{prop:contro-mol-SF}, we have that
$$
\|\Gcal_{m,\hh}f\|_{L^1(w)}\lesssim \|f\|_{\mathbb{H}_{L,p,\varepsilon,M}^1(w)}.
$$ 
Consequently, $\mathbb{H}_{L,p,\varepsilon,M}^1(w)\subset \mathbb{H}_{\Gcal_{m,\hh},p}^1(w).$ \qed

%%%%%%%%%%%%%%%%%%%%%%%%%%%%%%%%%%%%%%%%%%%%%%%%%%%%%%%%%%%%%%%%%%%%%%%%%%%
%%%%%%%%%%%%%%%%%%%%%%%%%%%%%%%%%%%%%%%%%%%%%%%%%%%%%%%%%%%%%%%%%%%%%%%%%%%

\bigskip

%%%%%%%%%%%%%%%%%%%%%%%%%%%%%%%%%%%%%%%%%%%%%%%%%%%%%%%%%%%%%%%%%%%%%%%%%%%
%%%%%%%%%%%%%%%%%%%%%%%%%%%%%%%%%%%%%%%%%%%%%%%%%%%%%%%%%%%%%%%%%%%%%%%%%%%
\section{Characterization of the weighted Hardy spaces defined by  square functions associated with the Poisson semigroup}\label{SKPG}

%%%%%%%%%%%%%%%%%%%%%%%%%%%%%%%%%%%%%%%%%%%%%%%%%%%%%%%%%%%%%%%%%%%%%%%%%%%%%%
%%%%%%%%%%%%%%%%%%%%%%%%%%%%%%%%%%%%%%%%%%%%%%%%%%%%%%%%%%%%%%%%%%%%%%%%%%%%%%

In this section, we prove Theorem \ref{thm:hardychartzPoisson},
which is obtained as a consequence of the following proposition.
\begin{proposition}\label{lemma:SKP}
Given $w\in A_{\infty}$, $p,q\in \mathcal{W}_w(p_-(L),p_+(L))$, $K,\,M \in \N$
such that $M>\frac{n}{2}\left(r_w-\frac{1}{2}\right)$, and
$\varepsilon_0=2M+2K+\frac{n}{2}-nr_w$, there hold
\begin{list}{$(\theenumi)$}{\usecounter{enumi}\leftmargin=1cm \labelwidth=1cm\itemsep=0.2cm\topsep=.2cm \renewcommand{\theenumi}{\alph{enumi}}}
\item $\mathbb{H}^1_{L,p,\varepsilon_0,M}(w)= \mathbb{H}^1_{\Scal_{K,\pp},p}(w),$
with equivalent norms.

\item 
$
H^1_{\Scal_{K,\pp},p}(w)$ and $H_{\Scal_{K,\pp},q}^1(w)
$ are isomorphic.

\item  $
\mathbb{H}_{L,p,\varepsilon_0,M}^1(w)= \mathbb{H}_{\Grm_{K-1,\pp},p}^1(w)= \mathbb{H}_{\Gcal_{K-1,\pp},p}^1(w),
$  with equivalent norms.
\end{list}
\end{proposition}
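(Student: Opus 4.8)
The proof follows the scheme of Proposition~\ref{lema:SH-1}, now with the Poisson semigroup $\{e^{-t\sqrt L}\}_{t>0}$ playing the role of the heat semigroup; the only genuinely new feature is that the Poisson family satisfies off-diagonal estimates of polynomial rather than exponential type, and keeping careful track of the resulting decay is exactly what forces the value $\varepsilon_0=2M+2K+\frac n2-nr_w$. Observe first that $\varepsilon_0>0$: the hypothesis $M>\frac n2\big(r_w-\frac12\big)$ gives $2M>nr_w-\frac n2$, hence $\varepsilon_0>2K\ge2$; and since $p_-(L)<2$ we have $\frac1{p_-(L)}>\frac12$, so $M>\frac n2\big(r_w-\frac12\big)>\frac n2\big(r_w-\frac1{p_-(L)}\big)$ and every $(w,p,\varepsilon_0,M)$-molecule is admissible in the sense of Proposition~\ref{prop:contro-mol-SF}. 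Applying that proposition to $S\in\{\Scal_{K,\pp},\Grm_{K-1,\pp},\Gcal_{K-1,\pp}\}$, and using that $f\in\mathbb H^1_{L,p,\varepsilon_0,M}(w)$ belongs to $L^p(w)$, we get $\|Sf\|_{L^1(w)}\lesssim\|f\|_{\mathbb H^1_{L,p,\varepsilon_0,M}(w)}$, so $\mathbb H^1_{L,p,\varepsilon_0,M}(w)\subset\mathbb H^1_{S,p}(w)$. Conversely, Lemma~\ref{lema:comparacion,SH-GH,SK-GK}$(b)$ (with exponent $1$) and the pointwise bound $\Grm_{K-1,\pp}f\le\Gcal_{K-1,\pp}f$ give $\|\Scal_{K,\pp}f\|_{L^1(w)}\lesssim\|\Grm_{K-1,\pp}f\|_{L^1(w)}\le\|\Gcal_{K-1,\pp}f\|_{L^1(w)}$, so $\mathbb H^1_{\Gcal_{K-1,\pp},p}(w)\subset\mathbb H^1_{\Grm_{K-1,\pp},p}(w)\subset\mathbb H^1_{\Scal_{K,\pp},p}(w)$. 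Hence, once we prove
\[
\mathbb H^1_{\Scal_{K,\pp},p}(w)\subset\mathbb H^1_{L,p,\varepsilon_0,M}(w)\quad\text{with}\quad\|f\|_{\mathbb H^1_{L,p,\varepsilon_0,M}(w)}\lesssim\|\Scal_{K,\pp}f\|_{L^1(w)},
\]
both parts $(a)$ and $(c)$ follow.

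To prove this inclusion, fix $f\in\mathbb H^1_{\Scal_{K,\pp},p}(w)$ and write $T_sf:=(s\sqrt L)^{2K}e^{-s\sqrt L}f$. First I would record, following the duality argument of Remark~\ref{remark:calderonreproducing}, a Calderón reproducing formula $f=\widetilde C\int_0^\infty\big((t\sqrt L)^{2K}e^{-t\sqrt L}\big)^{N_0}f\,\frac{dt}{t}$, for a sufficiently large integer $N_0$, with the integral (and the truncated integrals $\int_{R^{-1}}^{R}$) converging in $L^p(w)$; this is legitimate once one knows that the vertical square functions built from $(t\sqrt L)^{2K}e^{-t\sqrt L}$ and from $(t\sqrt{L^*})^{2K}e^{-t\sqrt{L^*}}$ are bounded on $L^p(w)$ and on $L^{p'}(w^{1-p'})$ respectively, which holds because $p\in\mathcal W_w(p_-(L),p_+(L))$, $p'\in\mathcal W_{w^{1-p'}}(p_-(L^*),p_+(L^*))$, and by the weighted Poisson estimates of \cite{MartellPrisuelos}. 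Then, exactly as in the proof of Proposition~\ref{lema:SH-1}$(a)$, set $O_l:=\{x\in\R^n:\Scal_{K,\pp}f(x)>2^l\}$, dilate to $O_l^*$, take a Whitney decomposition $\{Q_l^j\}_j$ of $O_l^*$ and form the tents $T_l^j$ as in \eqref{T}; the verification that $\supp T_sf\subset\bigcup_{l,j}T_l^j$ up to a $\mu$-null set is verbatim the one given there, since it only uses $\|\Scal_{K,\pp}f\|_{L^p(w)}<\infty$. Finally put $\lambda_l^j:=2^lw(Q_l^j)$ and define $\mm_l^j$ from the truncated formula over $T_l^j$ as in the heat case, namely $\mm_l^j:=(\lambda_l^j)^{-1}\int_0^\infty\big((t\sqrt L)^{2K}e^{-t\sqrt L}\big)^{N_0-1}\big(\chi_{T_l^j}(\cdot,t)\,T_tf\big)\,\frac{dt}{t}$.

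The core of the proof, and the step I expect to be the main obstacle, is to show that $C_0^{-1}\mm_l^j$ is a $(w,p,\varepsilon_0,M)$-molecule associated with $Q_l^j$, uniformly in $l$ and $j$; this is precisely where $\varepsilon_0$ is dictated. For the inner piece ($i=1$) one dualizes against $h\in L^{p'}(w^{1-p'})$ supported in $4Q_l^j$ and reduces, after rescaling the cone aperture, to the $L^{p'}(w^{1-p'})$-boundedness of the conical square function attached to $(t\sqrt{L^*})^{2K}e^{-t\sqrt{L^*}}$ together with $\Scal_{K,\pp}f\le2^{l+1}$ on $E_{l+1}$, obtaining $\big\|\big((\ell(Q_l^j)^2L)^{-k}\mm_l^j\big)\chi_{4Q_l^j}\big\|_{L^p(w)}\lesssim w(Q_l^j)^{\frac1p-1}$ exactly as for the heat semigroup. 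For $i\ge2$, in place of the Gaussian factor $e^{-c4^i}$ available in the heat case one only has the polynomial off-diagonal bounds for $\{(t\sqrt L)^{2K}e^{-t\sqrt L}\}_{t>0}$ recorded in Section~\ref{section:OD}; after the cut-off onto $C_i(Q_l^j)$, integration in $t\in(0,c\ell(Q_l^j))$ against the factors $t^{2k}$ and $t^{-n(\frac1{q_0}-\frac12)}$ (with $\max\{2,p\}<q_0<p_+(L)$ chosen so that $w\in RH_{(q_0/p)'}$), the reverse Hölder inequality, and the doubling estimate $w(2^{i+1}Q_l^j)\lesssim 2^{inr_w}w(Q_l^j)$, yield $\big\|\big((\ell(Q_l^j)^2L)^{-k}\mm_l^j\big)\chi_{C_i(Q_l^j)}\big\|_{L^p(w)}\lesssim 2^{-i\delta}w(2^{i+1}Q_l^j)^{\frac1p-1}$ for a net exponent $\delta>\varepsilon_0$ — which, with the $i=1$ bound, gives $\sum_{i\ge1}2^{i\varepsilon_0}w(2^{i+1}Q_l^j)^{1-\frac1p}\sum_{k=0}^M\|\cdots\|_{L^p(w)}\le C_0$. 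It is exactly the choice $\varepsilon_0=2M+2K+\frac n2-nr_w$, together with $M>\frac n2(r_w-\frac12)$, that makes this bookkeeping close with $\delta>\varepsilon_0>0$. The two remaining points are identical to the heat case: $\{\lambda_l^j\}\in\ell^1$ with $\sum_{l,j}|\lambda_l^j|=\sum_l2^lw(O_l^*)\lesssim\sum_l2^lw(O_l)\lesssim\|\Scal_{K,\pp}f\|_{L^1(w)}$, and $f=\sum_{l,j}\widetilde C\lambda_l^j\mm_l^j$ in $L^p(w)$ via the boundedness of the adjoint operator of Remark~\ref{remark:calderonreproducing} and dominated convergence. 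This proves $(a)$, and hence $(c)$.

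Part $(b)$ follows from $(a)$ exactly as Proposition~\ref{lema:SH-1}$(b)$ follows from Proposition~\ref{lema:SH-1}$(a)$: given $f\in\mathbb H^1_{\Scal_{K,\pp},p}(w)$ with the representation $f=\sum_{j+|l|>0}\lambda_l^j\mm_l^j$ constructed above, set $f_N:=\sum_{0<j+|l|\le N}\lambda_l^j\mm_l^j$; repeating the molecular estimates with $L^q(w)$ in place of $L^p(w)$ (the polynomial off-diagonal computation is insensitive to the value of this exponent and still produces decay $2^{-i\delta}w(2^{i+1}Q_l^j)^{\frac1q-1}$ with $\delta>\varepsilon_0$) shows that each $\mm_l^j$ is a fixed multiple of a $(w,q,\varepsilon_0,M)$-molecule and that $f_N\in L^q(w)$, whence $f_N\in\mathbb H^1_{L,q,\varepsilon_0,M}(w)=\mathbb H^1_{\Scal_{K,\pp},q}(w)$. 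Since $\|\Scal_{K,\pp}(f-f_N)\|_{L^1(w)}\lesssim\sum_{j+|l|>N}|\lambda_l^j|\to0$, we conclude $f\in H^1_{\Scal_{K,\pp},q}(w)$, and by symmetry $H^1_{\Scal_{K,\pp},p}(w)$ and $H^1_{\Scal_{K,\pp},q}(w)$ are isomorphic with equivalent norms.
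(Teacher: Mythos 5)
Your argument is correct in substance and, for parts $(a)$ and $(c)$, follows the paper's scheme: molecules are controlled via Proposition \ref{prop:contro-mol-SF}, the chain $\mathbb{H}^1_{\Gcal_{K-1,\pp},p}(w)\subset\mathbb{H}^1_{\Grm_{K-1,\pp},p}(w)\subset\mathbb{H}^1_{\Scal_{K,\pp},p}(w)$ comes from Lemma \ref{lema:comparacion,SH-GH,SK-GK}$(b)$, and the hard inclusion is proved by a Whitney/tent decomposition of the level sets of $\Scal_{K,\pp}f$ plus molecular estimates that exploit the polynomial off-diagonal bounds for the Poisson family, with duality for the $i=1$ piece and subordination to the heat semigroup for the vertical square-function facts needed to justify the Calder\'on reproducing formula and the $L^p(w)$ convergence of the series. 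Two points where you genuinely diverge are worth comparing. First, your reproducing formula uses $\big((t\sqrt L)^{2K}e^{-t\sqrt L}\big)^{N_0}$ with an unspecified ``sufficiently large'' $N_0$, so your outer operator is $(t^2L)^{K(N_0-1)}e^{-(N_0-1)t\sqrt L}$; this works, but you must quantify $N_0$: you need $K(N_0-1)\ge M$ just to absorb $(\ell(Q)^2L)^{-k}$ for all $k\le M$, and roughly $K(N_0-1)\ge 2M+K$ so that the worst-case ($k=M$) decay $2^{-i\left(2(K(N_0-1)-M)+1+\frac n2-rn\right)}$ beats $2^{-i\varepsilon_0}$. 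The paper instead splits $\big((t^2L)^{M+K}e^{-t\sqrt L}\big)^2$ asymmetrically into the inner factor $(t^2L)^{K}e^{-t\sqrt L}$ (matching $\Scal_{K,\pp}$) and the outer factor $(t^2L)^{2M+K}e^{-t\sqrt L}$, and it is this specific outer power that produces exactly the decay $2^{-i\left(2M+2K+\frac n2+1-rn\right)}$ and hence the stated $\varepsilon_0=2M+2K+\frac n2-nr_w$; in your scheme $\varepsilon_0$ is merely a target to be beaten, so your remark that the bookkeeping ``dictates'' $\varepsilon_0$ is not accurate as written. Second, for part $(b)$ you rerun the molecular estimates with $L^q(w)$ in place of $L^p(w)$ and approximate by the partial sums $f_N$, mirroring the paper's proof of Proposition \ref{lema:SH-1}$(b)$; this is valid, but the paper proves $(b)$ more economically by chaining isomorphisms already available: $H^1_{\Scal_{K,\pp},p}(w)\approx H^1_{L,p,\varepsilon_0,M}(w)\approx H^1_{\Scal_{K,\hh},p}(w)\approx H^1_{\Scal_{K,\hh},q}(w)\approx H^1_{L,q,\varepsilon_0,M}(w)\approx H^1_{\Scal_{K,\pp},q}(w)$, thereby avoiding any second pass through the Poisson molecular computation. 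Finally, a small justification detail: the $L^{p'}(w^{1-p'})$ and $L^p(w)$ boundedness of the vertical square functions built from $(t^2L)^{\widetilde K}e^{-t\sqrt L}$ is not directly in \cite{MartellPrisuelos} (which treats conical operators); the paper obtains it from the subordination estimate \eqref{controlverticalSK} together with \cite{AuscherMartell:III}, and you should invoke it the same way.
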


\subsection{ Proof of Proposition \ref{lemma:SKP}, part $(a)$}
To prove the left-to-the-right inclusion observe that if
$f\in \mathbb{H}_{L,p,\varepsilon_0,M}^1(w)$, in particular $f\in L^p(w)$, and from Proposition \ref{prop:contro-mol-SF}, part ($b$), we have that
    $$
    \|\Scal_{K,\pp}f\|_{L^1(w)}\lesssim \|f\|_{\mathbb{H}^1_{L,p,\varepsilon_0,M}(w)}.
    $$
Therefore, we conclude that
$
\mathbb{H}_{L,p,\varepsilon_0,M}^1(w)\subset\mathbb{H}^1_{\Scal_{K,\pp},p}(w).
$

\medskip

As for proving the converse, take $f\in  \mathbb{H}^1_{\Scal_{K,\pp},p}(w)$ and define the same
sets, $(O_l, O^*_l, T_l^j, etc),$ defined in the proof of Proposition
\ref{lema:SH-1}, pat ($a$), but replacing $\Scal_{m,\hh}$ with $\Scal_{K,\pp}$. Besides, consider the following Calder{{\'o}}n reproducing formula of $f$,
\begin{multline*}
f(x)=C\int_{0}^{\infty}\Bigg(\left((t^2L)^{M+K}e^{-t\sqrt{L}}\right)^{2}f(\cdot)\Bigg)(x)
\frac{dt}{t}
\\
= C\lim_{N\rightarrow \infty}\int_{N^{-1}}^{N}(t^2L)^{2M+K}e^{-t \sqrt{L}}
\left((t^2L)^{K}e^{-t\sqrt{L}}
f(\cdot)\right)(x)\frac{dt}{t}.
\end{multline*}
Following the ideas in Remark \ref{remark:calderonreproducing}, these equalities can be extended from $L^2(\R^n)$ to $L^p(w)$, if we show that the vertical square function associated with $(t^2L^*)^{2M+K}e^{-t \sqrt{L^*}}$ is bounded on $L^{p'}(w^{1-p'})$, but this follows from  
\eqref{controlverticalSK} below with $L^*$ in place of $L$ and \cite{AuscherMartell:III}. After this observation we continue with the proof, again following the same computations as in the proof of Proposition
\ref{lema:SH-1}, part ($a$), considering $T_sf(x):=(t^2L)^{K}e^{-t\sqrt{L}}
f(x)$, we can show that $\supp T_sf(x)\subset \left(\bigcup_{l\in \Z} \widehat{O^*_l}\setminus \widehat{O^*_{l+1}}\right)\bigcup \mathbb{F}$, with $\mu(\mathbb{F})=0$ ($\mu(x,s)=\frac{dx\,ds}{s}$).
Consequently, we have that
\begin{align*}
f(x)= C\lim_{N\rightarrow \infty}\int_{N^{-1}}^{N}(t^2L)^{2M+K}e^{-t \sqrt{L}}
\left(\sum_{j\in \N, l\in \Z}\chi_{T_l^j}(\cdot,t)(t^2L)^{K}e^{-t\sqrt{L}}
f(\cdot)\right)(x)\frac{dt}{t},
\end{align*}
in $L^p(w)$.
Hence, considering
    \begin{align*}
\lambda_{l}^j:=2^lw(Q_l^j), \quad \textrm{and} \quad
\mm_l^j(x):=\frac{1}{\lambda_l^j}
\int_{0}^{\infty}(t^2L)^{2M+K}e^{-t\sqrt{L}}
\left(\chi_{T_l^j}(\cdot,t)(t^2L)^{K}e^{-t\sqrt{L}}
f(\cdot)\right)(x)\frac{dt}{t},
\end{align*}
we show that, for some constant $C>0$, we have the following $(w,p,\varepsilon_0,M)-$representation of $f$:
$$
f=C\sum_{j\in \N, l\in \Z}\lambda_l^j\mm_l^j.
$$
To that end, we have to show the following:
\begin{enumerate}
\item[(a)] $\{\lambda_l^j\}\in \ell^1$,

\item [(b)] there exists a constant
$C_0>0$ such that $C_0^{-1}\mm_l^j$ is a $(w,p,\varepsilon_0,M)-$molecule, for all $j\in \N$ and $l\in \Z$,

\item[(c)] $f=C\sum_{j\in \N, l\in \Z}\lambda_l^j\mm_l^j$ in
$L^p(w)$.
\end{enumerate}
Statement $(a)$ follows from the definition of the cubes $Q_l^j$, and the sets $O_l$ and $O_l^*$,
 and from the fact that $\|\Scal_{K,\pp}f\|_{L^1(w)}<\infty$. Indeed, proceeding as in \eqref{in-l1}, we have
    \begin{align*}
    \sum_{j\in \N, l\in \Z}|\lambda_l^j|=\sum_{j\in \N, l\in \Z}2^lw(Q_l^j)\leq\sum_{l\in \Z}2^lw(O_l^*)\lesssim\sum_{l\in \Z}2^lw(O_l)
    \lesssim \|\Scal_{K,\pp}f\|_{L^1(w)}<\infty.
    \end{align*}
The proofs of $(b)$ and $(c)$ are similar to those of
Proposition \ref{lema:SH-1}, part ($a$), so we shall skip some details. To show $(b)$,
fix $j\in \N$, $l\in \Z$ and $0\leq k\leq M$, $k\in \N$. We need to compute the
following norms, for every $i\geq 1$,
$$
\left\|\left((\ell(Q_l^j)^2L)^{-k}\mm_l^j\right)\chi_{C_i(Q_l^j)}\right\|_{L^p(w)}.
$$
For $i=1$,  let $\mathcal{T}_t:=(t^2L)^{2M+K-k}e^{-t\sqrt{L}}$, for $t>0$, and for every $h\in L^{p'}(w^{1-p'})$ define $\mathcal{Q}_{L}h(x,t):=\mathcal{T}_t^*h(x)$, with $(x,t)\in \R^{n+1}_+$.
Applying the subordination formula 
\begin{align}\label{FR}
e^{-t\sqrt{L}}f(y)=C\int_0^{\infty}\frac{e^{-u}}{\sqrt{u}}e^{-\frac{t^2L}{4u}}f(y) \, du,
\end{align}
we have that, for every $\widetilde{K}\in \N$,
\begin{multline}\label{controlverticalSK}
\Bigg(\int_0^{\infty}\big|(t^2L)^{\widetilde{K}}e^{-t\sqrt{L}}f(x)\big|^2\frac{dt}{t}\Bigg)^{\frac{1}{2}}
\lesssim
\int_{0}^{\infty}e^{-u}u^{\frac{1}{2}}\Bigg(\int_0^{\infty}\big|(t^2L)^{\widetilde{K}}e^{-\frac{t^2}{4u}L}f(x)\big|^2\frac{dt}{t}\Bigg)^{\frac{1}{2}}\frac{du}{u}
\\
\lesssim
\int_{0}^{\infty}e^{-u}u^{\widetilde{K}+\frac{1}{2}}\frac{du}{u}\Bigg(\int_0^{\infty}\big|(t^2L)^{\widetilde{K}}e^{-t^2L}f(x)\big|^2\frac{dt}{t}\Bigg)^{\frac{1}{2}}
\lesssim 
\Bigg(\int_0^{\infty}\big|(t^2L)^{\widetilde{K}}e^{-t^2L}f(x)\big|^2\frac{dt}{t}\Bigg)^{\frac{1}{2}}.
\end{multline}
Therefore, 
\begin{multline*}
\|\mathcal{Q}_Lh\|_{L^{p'}_{\mathbb{H}}(w^{1-p'})}=
\big\| \,\|\mathcal{Q}_Lh\|_{\mathbb{H}}\big\|_{L^{p'}(w^{1-p'})}
=
\Bigg\| \Bigg(\int_0^{\infty}\big|(t^2L^*)^{2M+K-k}e^{-t\sqrt{L^*}}h\big|^2\frac{dt}{t}\Bigg)^{\frac{1}{2}}\Bigg\|_{L^{p'}(w^{1-p'})}
\\
\lesssim
\Bigg\|\Bigg(\int_0^{\infty}\big|(t^2L^*)^{2M+K-k}e^{-t^2L^*}h\big|^2\frac{dt}{t}\Bigg)^{\frac{1}{2}}
\Bigg\|_{L^{p'}(w^{1-p'})}
\lesssim
\|h\|_{L^{p'}(w^{1-p'})},
\end{multline*} 
where we have used that $p'\in (p_-(L^*),p_+(L^*))$ since $p_{\pm}(L^*)=p_{\mp}(L)'$), see \cite{Auscher}, 
\cite[Lemma 4.4]{AuscherMartell:I} and \cite{AuscherMartell:III}. Thus, $\mathcal{Q}_L$ is bounded from $L^{p'}(w^{1-p'})$ to $L^{p'}_{\mathbb{H}}(w^{1-p'})$, and,  as in Remark \ref{remark:calderonreproducing}, we have that its adjoint operator,
$$
\mathcal{Q}_L^*f(x)=\int_{0}^{\infty} (t^2L)^{2M+K-k}e^{-t\sqrt{L}}
f(x,t)\frac{dt}{t},
$$
has a bounded extension from $L^p_{\mathbb{H}}(w)$ to $L^p(w)$. 

After this observations we can treat the case $i=1$. Write $f_{l,K}^j(x,t):=\chi_{T_l^j}(x,t)(t^2L)^Ke^{-t\sqrt{L}}f(x)$, $\widetilde{g}(x,t):=t^{2k}f_{l,K}^j(x,t)$, and consider
$$
\mathcal{I}:=\big\{h\in L^{p'}(w^{1-p'}) : \supp h\subset 4Q_l^j \ \textrm{and} \ \|h\|_{L^{p'}(w^{1-p'})}=1\big\}.
$$
Proceeding as in \eqref{estimatemoleculei=1}, we have 
\begin{multline*}
\left\|\left((\ell(Q_l^j)^2L)^{-k}\mm_l^j\right)\chi_{4Q_l^j}
    \right\|_{L^p(w)} 
 =
\frac{\ell(Q_l^j)^{-2k}}{\lambda^{j}_l}
\sup_{h\in\mathcal{I}}\left|\int_{\R^n}\mathcal{Q}_L^*\widetilde{g}(x)\cdot h(x)dx\right|
\\
    \lesssim
    \frac{1}{\lambda_l^j} 
    \| \Scal_{K,\pp}f(x)\|_{L^p(cQ_l^j\cap E_{l+1},w)}
		\sup_{h\in \mathcal{I}}\left\|\left|\!\left|\!\left|
\mathcal{T}_t^*h\right|\!\right|\!\right|_{\Gamma(\cdot)}\right\|_{L^{p'}(w^{1-p'})}
    \lesssim w(Q_l^j)^{\frac{1}{p}-1}.
    \end{multline*}
The last inequality follows from the fact that $\Scal_{K,\pp}f(x)\leq 2^{l+1}$ for all $x\in E_{l+1}$ and also since the conical square function define by $\mathcal{T}_t^*$ is bounded on $L^{p'}(w^{1-p'})$ as $p'\in \mathcal{W}_{w^{1-p'}}(p_-(L^*),p_+(L^*))$ (see \cite{MartellPrisuelos}).

For $i\geq 2$, take $\max\{2,p\}<q_0<p_+(L)$ close enough to $p_+(L)$ such that $w\in
RH_{\left(\frac{q_0}{p}\right)'}$. Since $\{(t\sqrt{L}\,)^{2\widetilde{K}}e^{-t\sqrt{L}}\}_{t>0}\in \mathcal{F}_{\widetilde{K}+\frac{1}{2}}(L^{2}\rightarrow L^{q_0})$, for every $\widetilde{K}\in \N$, taking $r_w<r<r_w+\frac{1}{n}$ close enough to $r_w$ so that $M>\frac{n}{2}\left(r-\frac{1}{2}\right)$, recalling that $0\leq k\leq M$, and using \eqref{doublingcondition}, we have that
 \begin{align*}
&\left\|\left((\ell(Q_l^j)^2L)^{-k}\mm_l^j\right)\chi_{C_i(Q_l^j)}
    \right\|_{L^p(w)}
 \lesssim \frac{1}{\lambda_l^j}\ell(Q_l^j)^{-2k}w(2^{i+1}Q_l^j)^{\frac{1}{p}}|2^{i+1}Q_l^j|^{-\frac{1}{q_0}}
    \\
    &\hspace*{1.5cm}\times\int_0^{\infty}t^{2k}\left(\int_{C_i(Q_l^j)}\left|(t^2L)^{2M+K-k}e^{-t\sqrt{L}}\left(\chi_{Q_l^j}(\cdot)f_{l,K}^j(\cdot,t)\right)(y)\right|^{q_0}dy\right)^{\frac{1}{q_0}}\frac{dt}{t}
    \\
  &\quad\lesssim 2^{-l}w(Q_l^j)^{-1}w(2^{i+1}Q_l^j)^{\frac{1}{p}}
  |2^{i+1}Q_l^j|^{-\frac{1}{q_0}}
    \\
    &\hspace*{1.5cm}\times\int_0^{5\sqrt{n}\ell(Q_l^j)}t^{-n\left(\frac{1}{2}-\frac{1}{q_0}\right)}\left(1+\frac{c4^i\ell(Q_l^j)^2}{t^2}\right)^{-\left(2M+K-k+\frac{1}{2}+\frac{n}{2}\left(\frac{1}{2}-\frac{1}{q_0}\right)\right)}\left(\int_{Q_l^j}|f_{l,K}^j(y,t)|^{2}dy\right)^{\frac{1}{2}}\frac{dt}{t}
    \\
   &\quad\lesssim 2^{-l}2^{irn}
    w(2^{i+1}Q_l^j)^{\frac{1}{p}-1}
    |2^{i+1}Q_l^j|^{-\frac{1}{q_0}}\left(\int_{cQ_l^j\cap E_{l+1}}
   |\Scal_{K,\pp}f(x)|^2 dx\right)^{\frac{1}{2}}
     \\
    &\hspace*{1.5cm}\times
    \left(\int_{0}^{5\sqrt{n}\ell(Q_l^j)}t^{-2n\left(\frac{1}{2}-
    \frac{1}{q_0}\right)}\left(1+\frac{c4^{i}\ell(Q_l^j)^2}{t^2}
    \right)^{-\left
    (4M+2K-2k+1+n\left(\frac{1}{2}-\frac{1}{q_0}\right)
    \right)}\frac{dt}{t}\right)^{\frac{1}{2}}
    \\
    &\quad \lesssim 2^{-i\left(2M+2K+\frac{n}{2}+1-rn
    \right)}w(2^{i+1}Q_l^j)^{\frac{1}{p}-1}
    \\&\quad
    \leq 2^{-i\varepsilon_0}2^{-i(r_wn-rn+1)}w(2^{i+1}Q_l^j)^{\frac{1}{p}-1}.
    \end{align*}
Therefore, it follows that $\|\mm_l^j\|_{mol,w}\leq C_0$ for some constant $C_0$ uniform  in $j\in \N$ and $l\in \Z$.

Let us finally prove that 
$
f=C\sum_{j\in \N, l\in \Z}\lambda_l^j\mm_l^j\,\, \textrm{in}\,\, L^p(w).
$
We follow the same computations as in the proof of Proposition \ref{lema:SH-1} part ($a$), we first see that by \eqref{controlverticalSK} 
\begin{multline*}
\Bigg\|\sum_{j\in \N, l\in \Z} f_{l,m}^j\Bigg\|_{L_\mathbb{H}^p(w)}
=
\Bigg\|\sum_{j\in \N, l\in \Z} |f_{l,m}^j|\Bigg\|_{L_\mathbb{H}^p(w)}
\le
\Bigg
\|\Big(\int_0^\infty |(t^2L)^Ke^{-t\sqrt{L}}f|^2\frac{dt}{t}\Big)^\frac12\Bigg\|_{L^p(w)}
\\
\lesssim
\Bigg\|\Big(\int_0^\infty |(t^2L)^Ke^{-t^2L}f|^2\frac{dt}{t}\Big)^\frac12\Bigg\|_{L^p(w)}
\lesssim
\|f\|_{L^p(w)}.
\end{multline*}
This allows to obtain 
\eqref{eqn:repre-Lpw-just} where in this case $\mathcal{Q}_L^M g(x)=(t^2L^*)^{2M+K}e^{-t\sqrt{L^*}}g(x)$, $(x,t)\in\mathbb{R}^{n+1}_+$. 
Consequently, $f=C\sum_{j\in \N, l\in \Z}\lambda_l^j\mm_l^j\in \mathbb{H}_{L,p,\varepsilon_0,M}^1(w)$,
and also,
\begin{align*}
\|f\|_{\mathbb{H}_{L,p,\varepsilon_0,M}^1(w)}\lesssim
\sum_{j\in \N, l\in \Z}|\lambda_l^j|=\sum_{j\in \N, l\in \Z}2^lw(Q_l^j)\lesssim \sum_{l\in\Z}2^lw(O_l)\lesssim \|\Scal_{K,\pp}f\|_{L^1(w)}
=\|f\|_{ \mathbb{H}^1_{\Scal_{K,\pp},p}(w)}.
\end{align*}
\qed

\medskip
%%%%%%%%%%%%%%%%%%%%%%%%%%%%%%%%%%%%%%%%%%%%%%%%%%%%%%%%%%%%%%%%%
%%%%%%%%%%%%%%%%%%%%%%%%%%%%%%%%%%%%%%%%%%%%%%%%%%%%%%%%%%%%%%%%%
\subsection{Proof of Proposition \ref{lemma:SKP}, part ($b$).}
%%%%%%%%%%%%%%%%%%%%%%%%%%%%%%%%%%%%%%%%%%%%%%%%%%%%%%%%%%%%%%%%%%
%%%%%%%%%%%%%%%%%%%%%%%%%%%%%%%%%%%%%%%%%%%%%%%%%%%%%%%%%%%%%%%%%%%%
Given $w\in A_{\infty}$ and $p,q\in \mathcal{W}_w(p_-(L),p_+(L))$,  from part ($a$), we have that
$
\mathbb{H}_{L,p,\varepsilon_0,M}^1(w)=\mathbb{H}_{\Scal_{K,\pp},p}^1(w)
$ and $
\mathbb{H}_{L,q,\varepsilon_0,M}^1(w)=\mathbb{H}_{\Scal_{K,\pp},q}^1(w),
$
with equivalent norms.
Hence we have the following isomorphisms
$
H_{L,p,\varepsilon_0,M}^1(w)\approx H_{\Scal_{K,\pp},p}^1(w)$ and
$H_{L,q,\varepsilon_0,M}^1(w)\approx H_{\Scal_{K,\pp},q}^1(w).
$
On the other hand, from Proposition \ref{lema:SH-1}, parts ($a$) and ($b$), we have that 
$$
H_{L,p,\varepsilon_0,M}^1(w)\approx H_{\Scal_{K,\hh},p}^1(w)
\approx H_{\Scal_{K,\hh},q}^1(w)\approx
H_{L,q,\varepsilon_0,M}^1(w).
$$
Therefore, we conclude that the spaces $ H_{\Scal_{K,\pp},p}^1(w)$ and 
$ H_{\Scal_{K,\pp},q}^1(w)$ are isomorphic.
\qed
%%%%%%%%%%%%%%%%%%%%%%%%%%%%%%%%%%%%%%%%%%%%%%%%%%%%%%%%%%%%%%%%%
%%%%%%%%%%%%%%%%%%%%%%%%%%%%%%%%%%%%%%%%%%%%%%%%%%%%%%%%%%%%%%%%%
\subsection{Proof of Proposition \ref{lemma:SKP}, part ($c$).}
%%%%%%%%%%%%%%%%%%%%%%%%%%%%%%%%%%%%%%%%%%%%%%%%%%%%%%%%%%%%%%%%%%
%%%%%%%%%%%%%%%%%%%%%%%%%%%%%%%%%%%%%%%%%%%%%%%%%%%%%%%%%%%%%%%%%%%%
For  $f \in \mathbb{H}_{\Gcal_{K-1,\pp},p}^1(w)$, applying Lemma \ref{lema:comparacion,SH-GH,SK-GK}, part ($b$), and the fact that $\Grm_{K-1,\pp}f(x)\leq \Gcal_{K-1,\pp}f(x)$ for every $x\in \R^n$ and for every $K\in \N$, we conclude that
$$
\|\Scal_{K,\pp}f\|_{L^1(w)}\lesssim 
\|\Grm_{K-1,\pp}f\|_{L^1(w)}\leq \|\Gcal_{K-1,\pp}f\|_{L^1(w)}.
$$
This and Proposition \ref{lemma:SKP}, part ($a$), imply
$$
\mathbb{H}_{\Gcal_{K-1,\pp},p}^1(w)\subset 
\mathbb{H}_{\Grm_{K-1,\pp},p}^1(w)
\subset
\mathbb{H}_{\Scal_{K,\pp},p}^1(w)=\mathbb{H}_{L,p,\varepsilon_0,M}^1(w).
$$
To complete the proof, take $f\in \mathbb{H}_{L,p,\varepsilon_0,M}^1(w)$. In particular we have that $f\in L^p(w)$, and by Proposition \ref{prop:contro-mol-SF}, 
$
\|\Gcal_{K-1,\pp}f\|_{L^1(w)}\lesssim \|f\|_{\mathbb{H}_{L,p,\varepsilon_0,M}^1(w)}.
$
Then, $\mathbb{H}_{L,p,\varepsilon_0,M}^1(w)\subset \mathbb{H}_{\Gcal_{K-1,\pp},p}^1(w)$.
\qed

%%%%%%%%%%%%%%%%%%%%%%%%%%%%%%%%%%%
%%%%%%%%%%%%%%%%%%%%%%%%%%%%%%%%%%%
\section{Non-tangential maximal functions}\label{section:NT}
%%%%%%%%%%%%%%%%%%%%%%%%%%%%%%%%%%%
%%%%%%%%%%%%%%%%%%%%%%%%%%%%%%%%%%%

Before starting with the characterization of the Hardy spaces $H^1_{\Ncal_{\hh}}(w)$ and $H^1_{\Ncal_{\pp}}(w)$. We study the $L^p(w)$ boundedness of $\Ncal_{\hh}$ and $\Ncal_{\pp}$ (see \eqref{nontangential1}--\eqref{nontangential2}). Additionally we need to see how they control  the corresponding square functions. The results are the following:
\begin{proposition}\label{prop:acotacion-N}
Given $w\in A_{\infty}$. There hold
\begin{list}{$(\theenumi)$}{\usecounter{enumi}\leftmargin=1cm \labelwidth=1cm\itemsep=0.2cm\topsep=.2cm \renewcommand{\theenumi}{\alph{enumi}}}

\item $
\Ncal_{\hh}$  is bounded on $L^p(w)
$ for all $p\in \mathcal{W}_w(p_-(L),\infty)$,

\item $\Ncal_{\pp}$  is bounded on $L^p(w)
$ for all $p\in \mathcal{W}_w(p_-(L),p_+(L))$.
\end{list}

\end{proposition}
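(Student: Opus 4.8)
The plan is to establish, for each of the two operators, an appropriate ``$(L^{p_0},L^{q_0})$ off-diagonal'' estimate and then to obtain the weighted bounds from the extrapolation/good-$\lambda$ machinery for Muckenhoupt weights. Throughout, $\mathcal{M}$ denotes the Hardy--Littlewood maximal operator, which is bounded on $L^r(w)$ exactly when $w\in A_r$.

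For $\mathcal{N}_{\hh}$ the endpoint $q_0=\infty$ is available, so I would prove the pointwise bound $\mathcal{N}_{\hh}f(x)\lesssim(\mathcal{M}(|f|^{p_0})(x))^{1/p_0}$ for every $p_0\in(p_-(L),2]$. To do this, fix $(y,t)\in\Gamma(x)$ (so that $B(y,t)\subset B(x,2t)$) and split $f=\sum_{j\ge1}f\chi_{C_j}$ with $C_1=4B(y,t)$ and $C_j=2^{j+1}B(y,t)\setminus 2^jB(y,t)$ for $j\ge 2$. Using that $\{e^{-t^2L}\}_{t>0}\in\mathcal{F}_\infty(L^{p_0}\to L^2)$ (valid since $p_-(L)<p_0\le 2<p_+(L)$), that $d(C_j,B(y,t))\gtrsim 2^jt$, that $|B(y,t)|\approx t^n$, and that $2^{j+1}B(y,t)\subset B(x,2^{j+2}t)$, one obtains
\[
\big(\dashint_{B(y,t)}|e^{-t^2L}(f\chi_{C_j})|^2\big)^{1/2}\lesssim t^{-n/p_0}e^{-c4^j}\|f\chi_{C_j}\|_{L^{p_0}(\mathbb{R}^n)}\lesssim e^{-c4^j}2^{jn/p_0}\big(\mathcal{M}(|f|^{p_0})(x)\big)^{1/p_0};
\]
summing this super-exponentially convergent series in $j$ and taking $\sup_{(y,t)\in\Gamma(x)}$ proves the claim. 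Since $p\in\mathcal{W}_w(p_-(L),\infty)=(p_-(L)r_w,\infty)$ and $p_-(L)<2$, one can choose $p_0\in(p_-(L),2)$ with $p/p_0>r_w$, so that $w\in A_{p/p_0}$ and hence
\[
\|\mathcal{N}_{\hh}f\|_{L^p(w)}\lesssim\big\|\mathcal{M}(|f|^{p_0})\big\|_{L^{p/p_0}(w)}^{1/p_0}\lesssim\big\||f|^{p_0}\big\|_{L^{p/p_0}(w)}^{1/p_0}=\|f\|_{L^p(w)}.
\]

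For $\mathcal{N}_{\pp}$ the exponent $q_0=\infty$ is no longer attainable, so instead I would produce a local $(L^{p_0},L^{q_0})$ estimate with $p_-(L)<p_0\le 2\le q_0<p_+(L)$ — of the form $\big(\dashint_B|\mathcal{N}_{\pp}f|^{q_0}\big)^{1/q_0}\lesssim\sum_{i\ge1}\alpha_i\big(\dashint_{2^{i+1}B}|f|^{p_0}\big)^{1/p_0}$ for every ball $B$, with $\sum_i\alpha_i<\infty$ — and then invoke the weighted extrapolation of Auscher--Martell \cite{AuscherMartell:I}, which requires precisely $w\in A_{p/p_0}\cap RH_{(q_0/p)'}$, i.e.\ $p\in\mathcal{W}_w(p_0,q_0)$; letting $p_0\downarrow p_-(L)$ and $q_0\uparrow p_+(L)$ then covers all of $\mathcal{W}_w(p_-(L),p_+(L))$. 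To prove the local estimate I would use the subordination formula \eqref{FR} to write $e^{-t\sqrt L}f$ as an average of $e^{-s^2L}f$ over $s=t/(2\sqrt u)$ against $u^{-1/2}e^{-u}\,du$; after Minkowski's inequality this reduces to controlling a non-tangential maximal function at heat scale $s$ whose cone aperture and averaging ball have radius $2\sqrt u\,s$. The regime $u\gtrsim1$ is handled as in the heat case, with a polynomial-in-$u$ loss absorbed by $e^{-u}$ after a change-of-angle estimate (as in \cite{MartellPrisuelos}). In the regime $0<u<1$ the averaging ball is small compared with the heat scale $s$; here one passes to the $L^{q_0}$-average using $\{e^{-s^2L}\}_{s>0}\in\mathcal{F}_\infty(L^{p_0}\to L^{q_0})$ (together, if needed, with the gradient off-diagonal estimates for $\{s\nabla e^{-s^2L}\}_{s>0}$ and Sobolev's embedding, recalling $q_+(L)^{*}\le p_+(L)$) so that the resulting singular power of $u$ remains integrable.

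The step I expect to be the main obstacle is exactly this small-$u$ (equivalently, large heat scale $s\gg t$) regime of the subordination integral for $\mathcal{N}_{\pp}$: one is forced to average a function that has already been regularised at scale $s$ over a ball of much smaller radius, and taming the resulting $u$-singularity is precisely what requires the full off-diagonal range with $q_0$ arbitrarily close to $p_+(L)$. This is why the admissible interval for $\mathcal{N}_{\pp}$ has finite upper endpoint $p_+(L)/s_w$, in contrast with $\mathcal{N}_{\hh}$, where the averaging ball always matches the heat scale, the endpoint $q_0=\infty$ is reached, and the range extends to $\infty$.
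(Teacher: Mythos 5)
Your treatment of $\Ncal_{\hh}$ is correct and is essentially the paper's argument: the $L^{p_0}(\R^n)-L^2(\R^n)$ off-diagonal estimates for $\{e^{-t^2L}\}_{t>0}$ give $\Ncal_{\hh}f\lesssim \mathcal{M}_{p_0}f:=(\mathcal{M}|f|^{p_0})^{\frac{1}{p_0}}$, and choosing $p_0\in(p_-(L),2)$ close enough to $p_-(L)$ so that $w\in A_{\frac{p}{p_0}}$ gives part $(a)$. Part $(b)$, however, has a genuine gap: the local estimate you plan to prove, $\big(\dashint_B|\Ncal_{\pp}f|^{q_0}\big)^{1/q_0}\lesssim\sum_{i\geq 1}\alpha_i\big(\dashint_{2^{i+1}B}|f|^{p_0}\big)^{1/p_0}$ for every ball $B$, is false whenever $q_0>p_0$, because $\Ncal_{\pp}$ has no smoothing at scales $t\ll r_B$, where $e^{-t\sqrt{L}}$ is essentially the identity. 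Already for $L=-\Delta$, testing with $f_\varepsilon=\varepsilon^{-n/p_0}\chi_{B(x_0,\varepsilon)}$, $x_0$ the center of $B$, one has $\Ncal_{\pp}f_\varepsilon(x)\gtrsim \varepsilon^{n/p_0'}|x-x_0|^{-n}$ for $\varepsilon<|x-x_0|<r_B$, so the left-hand side blows up like $\varepsilon^{n(1/q_0-1/p_0)}$ as $\varepsilon\to 0$ while the right-hand side stays bounded. The Auscher--Martell weighted criterion you want to invoke does not require such an inequality: it asks for two conditions, an $L^{p_0}$-average bound for $\Ncal_{\pp}(I-e^{-r_B^2L})^m f$ in terms of $\mathcal{M}(|f|^{p_0})$, and an $L^{p_0}\to L^{q_0}$ improvement only for the smoothed piece $\Ncal_{\pp}\big(I-(I-e^{-r_B^2L})^m\big)f$, with $\mathcal{M}(|\Ncal_{\pp}f|^{p_0})$ (not averages of $f$) on the right. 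Verifying these is a genuinely different task from the subordination sketch you give, and your small-$u$ discussion does not address it.

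For comparison, the paper's proof of $(b)$ avoids any $L^{p_0}$--$L^{q_0}$ gain for $\Ncal_{\pp}$ itself: it writes $\Ncal_{\pp}f\le \mathfrak{m}_{\pp}f+\Ncal_{\hh}f$ with $\mathfrak{m}_{\pp}$ built on $e^{-t\sqrt{L}}-e^{-t^2L}$, and uses subordination to split $\mathfrak{m}_{\pp}f$ into a small-$u$ part and a large-$u$ part. In the small-$u$ regime the difference $e^{-(\frac{1}{4u}-\frac12)t^2L}-e^{-\frac{t^2}{2}L}$ is written as an integral of $r^2Le^{-r^2L}$ and bounded, at the price of a factor $\log(u^{-1/2})^{1/2}$, by the vertical square function $g_{\hh}f$, yielding the pointwise control $\mathcal{M}_{p_0}(g_{\hh}f)$; the large-$u$ part is controlled by $\int_{1/4}^{\infty}e^{-u}\,\Scal_{\hh}^{4\sqrt{u}}f\,du$ and handled by the weighted change-of-angle estimate. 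The weighted bound then follows from part $(a)$, the weighted bounds for $\Scal_{\hh}$, and--crucially--the $L^p(w)$ boundedness of the vertical square function $g_{\hh}$, which holds exactly for $p\in\mathcal{W}_w(p_-(L),p_+(L))$ and is the true source of the upper endpoint $p_+(L)/s_w$ in $(b)$, rather than the mechanism you propose.
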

\begin{proposition}\label{prop:comparacion-N-G}
Given an arbitrary $f\in L^2(\mathbb{R}^n)$, for all $w\in A_{\infty}$ and $0<p<\infty$, there hold
\begin{list}{$(\theenumi)$}{\usecounter{enumi}\leftmargin=1cm \labelwidth=1cm\itemsep=0.2cm\topsep=.2cm \renewcommand{\theenumi}{\alph{enumi}}}
\item $
\|\Gcal_{\pp}f\|_{L^p(w)}\lesssim  \|\Ncal_{\pp}f\|_{L^p(w)}$,

\item
$
\|\Grm_{\hh}f\|_{L^p(w)}\lesssim  \|\Ncal_{\hh}f\|_{L^p(w)}$.
\end{list}
\end{proposition}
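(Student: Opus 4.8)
The plan is to exploit that the Poisson and heat semigroups produce weak solutions of divergence-form equations on $\R^{n+1}_+$, run a Caccioppoli-plus-Green's-identity argument to get a local Carleson-type control of the square-function density by $\Ncal_{\pp}f$ (resp. $\Ncal_{\hh}f$), and then upgrade to every $0<p<\infty$ and $w\in A_\infty$ by a Fefferman--Stein good-$\lambda$ inequality. For $f\in L^2(\R^n)$ put $U(y,t):=e^{-t\sqrt L}f(y)$. By functional calculus $\partial_t^2U=LU=-\div_y(A\nabla_yU)$, so $U$ solves in $\R^{n+1}_+$ the equation $\div_{y,t}\big(\widetilde A\,\nabla_{y,t}U\big)=0$ with the $t$-independent block matrix $\widetilde A:=\mathrm{diag}(A,1)$, which inherits the ellipticity and boundedness in \eqref{matrix:Aproperties}; in particular $U$ satisfies the interior Caccioppoli inequality $\iint_{B}|\nabla_{y,t}U|^2\lesssim r^{-2}\iint_{2B}|U|^2$ for every ball $B$ with $2B\subset\R^{n+1}_+$ of radius $r$. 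Likewise $v(y,s):=e^{-sL}f(y)$ solves the parabolic equation $\partial_sv=\div_y(A\nabla_yv)$ and satisfies the corresponding parabolic Caccioppoli inequality on cylinders; since the change of variable $s=t^2$ turns $\Grm_{\hh}f(x)^2$ and $\Ncal_{\hh}f(x)^2$ into the obvious parabolic analogues of $\Gcal_{\pp}f(x)^2$ and $\Ncal_{\pp}f(x)^2$, parts (a) and (b) are structurally the same statement and I will detail only (a).

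\textbf{Local estimate.} For a cube $Q$ I will test the equation against $\Phi^2\,t\,\overline U$, where $\Phi$ is a cutoff equal to $1$ on $\{(y,t):y\in Q,\ 0<t<\ell(Q)\}$, supported in a fixed dilated box, $t$-independent for $t<\ell(Q)/2$, with $|\nabla\Phi|\lesssim\ell(Q)^{-1}$. Using the ellipticity of $\widetilde A$, the boundary term at $\{t=0\}$ vanishes because of the explicit factor $t$; Cauchy--Schwarz with a small parameter absorbs the cross term $\iint\Phi|\nabla\Phi|\,|\nabla_{y,t}U|\,|U|\,t$ into the left-hand side; and integrating by parts in $t$ the remaining term $\iint\Phi^2\partial_tU\,\overline U$ leaves only contributions at $\{t=0\}$ (proportional to $|f|^2$, controlled via $|f|\le\Ncal_{\pp}f$ a.e., which follows by letting $t\to0$ in \eqref{nontangential2} at Lebesgue points) and at heights $\approx\ell(Q)$. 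At such heights $B(y,t)\subset B(x,\alpha t)$ for every $x\in Q$, so those terms are dominated by $\Ncal_{\pp}^{\alpha}f$, while the genuinely small-$t$ part of $\ell(Q)^{-2}\iint|U|^2t\,dy\,dt$ is handled by comparing $U(\cdot,t)$ with $f$ through the $L^2$ off-diagonal decay of $\{e^{-t\sqrt L}\}_{t>0}$. Together with a Fubini step, this yields the key local bound
\[
\int_Q\bigg(\iint_{\Gamma(x)\cap\{t<\ell(Q)\}}|t\nabla_{y,t}U(y,t)|^2\,\frac{dy\,dt}{t^{n+1}}\bigg)\,dx\ \lesssim\ \int_{C Q}\big(\Ncal_{\pp}f\big)^2\,dx ;
\]
the absorption is licit because $\iint_{\R^{n+1}_+}|\nabla_{y,t}U|^2t\,dy\,dt\approx\|\Gcal_{\pp}f\|_{L^2(\R^n)}^2\lesssim\|f\|_{L^2(\R^n)}^2<\infty$, which is itself the global ($p=2$, $w\equiv1$) case of the statement, obtained by running the same Green's identity over all of $\R^{n+1}_+$ (and which I would record first).

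\textbf{Good-$\lambda$ and conclusion.} For $\lambda>0$ I Whitney-decompose $\{\Gcal_{\pp}f>\lambda\}=\bigcup_jQ_j$, split $\Gcal_{\pp}f(x)^2$ into the truncation at height $\ell(Q_j)$ plus a tail over $\{t\ge\ell(Q_j)\}$, bound the tail by $\lesssim\lambda^2$ on $Q_j$ using the Whitney geometry (a nearby point with $\Gcal_{\pp}f\le\lambda$) and the change-of-angle inequalities of \cite{MartellPrisuelos}, and estimate the measure of $\{\Gcal_{\pp}f>C_0\lambda,\ \Ncal_{\pp}f\le\gamma\lambda\}\cap Q_j$ through the local bound and Chebyshev, obtaining $\lesssim\gamma^2|Q_j|$. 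Summing over $j$ and invoking the $A_\infty$ inequality $w(E)/w(Q_j)\lesssim(|E|/|Q_j|)^{\theta}$ coming from \eqref{pesosineq:RHq} gives the weighted good-$\lambda$ estimate $w(\{\Gcal_{\pp}f>C_0\lambda,\,\Ncal_{\pp}f\le\gamma\lambda\})\lesssim\gamma^{2\theta}\,w(\{\Gcal_{\pp}f>\lambda\})$, whence $\|\Gcal_{\pp}f\|_{L^p(w)}\lesssim\|\Ncal_{\pp}f\|_{L^p(w)}$ for all $0<p<\infty$ by the standard distribution-function argument; part (b) is the same with the parabolic Caccioppoli and energy identity on cylinders and the change of variable $s=t^2$.

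\textbf{Main obstacle.} The delicate point is the local estimate: since $A$ is merely bounded measurable and complex-valued, solutions enjoy no De Giorgi--Nash--Moser regularity, so the Green's identity must be run entirely in terms of $L^2$ averages and the cutoff $\Phi$ must be arranged so that the only surviving boundary contribution sits at a bounded height (where Caccioppoli applies and $L^2$-averages of $|U|^2$ are dominated by $\Ncal_{\pp}f$) while the $\{t=0\}$ contribution is killed by the factor $t$. The second point requiring care is the bookkeeping in the good-$\lambda$ step that reconciles the fixed aperture in the statement with the aperture changes forced by the Whitney cubes, which is precisely where the change-of-angle inequalities of \cite{MartellPrisuelos} (and the $L^p(w)$ bounds for $\Ncal_{\pp},\Ncal_{\hh}$ of Proposition~\ref{prop:acotacion-N}) enter.
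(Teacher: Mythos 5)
Your overall scheme (block matrix, Caccioppoli, Green's identity, good-$\lambda$, change of angles) is the same family of argument as the paper's, but the key step as you set it up does not close. Your local estimate is taken over a full Carleson box, with right-hand side $\int_{CQ}(\Ncal_{\pp}f)^2\,dx$: the boundary term at $\{t=0\}$ produces $\int |f|^2\Phi^2\,dy$ over all of $CQ$, and the lateral/small-$t$ error terms likewise involve averages of $|U|^2$ at points whose distance to the good set is large compared with $t$. In the good-$\lambda$ step you only know $\Ncal_{\pp}f\le\gamma\lambda$ on the part of $Q_j$ you keep, not on $CQ_j$, so Chebyshev gives $\lambda^{-2}\int_{CQ_j}(\Ncal_{\pp}f)^2\,dx$, which cannot be bounded by $\gamma^2|Q_j|$; the cutoff factor $t$ makes the $t=0$ contribution finite, not small. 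This is precisely why the paper (following Hofmann--Mayboroda, Theorems 6.1 and 7.1) runs the integration by parts not over a box but over the sawtooth region built on the density points $E^*_{\gamma\lambda}\cap Q_j$ of $\{\Ncal_{\pp}^{\kappa}f\le\gamma\lambda\}$: every piece of the boundary (bottom sawtooth, top, and the Whitney-ball covering of the complement), after averaging in the parameter $\beta$ via the coarea formula, lies within a bounded-aperture reach of a point where $\Ncal_{\pp}^{\kappa}f\le\gamma\lambda$, and elliptic (resp.\ parabolic, for part (b)) Caccioppoli converts the gradient boundary integrals into such averages; this is what yields \eqref{1qj} and hence \eqref{wqj}.

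Two further points in your good-$\lambda$ bookkeeping would fail as written. First, the tail of $\Gcal_{\pp}f(x)^2$ over $t\ge\ell(Q_j)$ for $x\in Q_j$ is dominated pointwise by the \emph{larger-aperture} square function evaluated at the nearby Whitney point; knowing only $\Gcal_{\pp}f\le\lambda$ (aperture $1$) there is not enough, and the change-of-angle results of \cite{MartellPrisuelos} are norm, not pointwise, inequalities. One must define the exceptional set from the start with the enlarged aperture, as the paper does with $\Gcal_{\pp,N}^{\alpha}$, $\alpha=12\sqrt n$. Second, the final distribution-function argument requires absorbing a term $C\gamma^{c_w}\|\Gcal_{\pp}f\|_{L^p(w)}^p$, which presupposes a priori finiteness of exactly the quantity being estimated; for general $p$ and $w\in A_\infty$ this is unknown, and the paper secures it by working with the truncated operators $\Gcal_{\pp,N}$ built from $K_N$ in \eqref{N-KN} (whose $L^p(w)$ norms are finite), proving the good-$\lambda$ uniformly in $N$, and letting $N\to\infty$ by monotone convergence; the aperture change for the maximal function also needs Lemma \ref{lema:changeofangelN}. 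Finally, your claim $|f|\le\Ncal_{\pp}f$ a.e.\ does not follow just by letting $t\to0$ at Lebesgue points, since $e^{-t\sqrt L}f\to f$ only in $L^2$; it needs an integrated Fatou/Fubini argument (or can be avoided altogether, as in the paper, since the sawtooth has no boundary at $t=0$ before the limit $\varepsilon\to0$).
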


\subsection{Proof of Proposition \ref{prop:acotacion-N}, part $(a)$}
Fix $w\in A_{\infty}$ and $p\in \mathcal{W}_w(p_-(L),\infty)$. Take
$p_0\in (p_-(L),2)$ and apply the $L^{p_0}(\R^n)-L^2(\R^n)$ off-diagonal estimates satisfied by the family $\{e^{-t^2L}\}_{t>0}$ to obtain
\begin{multline*}
    \|\Ncal_{\hh}f\|_{L^p(w)}    
    \leq\left(\int_{\R^n}\sup_{t>0}\left(\int_{B(x, 2 t)}
    |e^{-t^2L}f(z)|^2\frac{dz}{t^n}\right)^{\frac{p}{2}}
    w(x)dx\right)^{\frac{1}{p}}
    \\
\lesssim \sum_{j\geq
1}e^{-c4^{j}}\left(\int_{\R^n}\sup_{t>0}\left(\int_{B(x,2^{j+2}
 t)}
    |f(z)|^{p_0}\frac{dz}{  t^n}\right)^{\frac{p}{p_0}}
    w(x)dx\right)^{\frac{1}{p}}
    \\
  \lesssim \sum_{j\geq
1}e^{-c4^{j}}2^{\frac{jn}{p_0}}
\left(\int_{\R^n}\mathcal{M}_{p_0}f(x)^p
    w(x)dx\right)^{\frac{1}{p}}
   \lesssim \|\mathcal{M}_{p_0}f\|_{L^p(w)},
\end{multline*}
where  $\mathcal{M}_{p_0}f:=(\mathcal{M}|f|^{p_0})^{\frac{1}{p_0}}$.

Now, take $p_-(L)<p_0<2$ close enough to $p_-(L)$ so that
$w\in A_{\frac{p}{p_0}}$. Consequently, $\mathcal{M}_{p_0}$ is bounded on
$L^p(w)$, and then, we conclude that
$$
    \|\Ncal_{\hh}f\|_{L^p(w)}\lesssim \|\mathcal{M}_{p_0}f\|_{L^p(w)}
    \lesssim \|f\|_{L^p(w)}.
 $$
\qed
\subsection{Proof of Proposition \ref{prop:acotacion-N}, part $(b)$}
First, notice that we can split $\Ncal_{\pp}$ as follows
\begin{align*}
\Ncal_{\pp}f(x)&\leq \sup_{(y,t)\in \Gamma(x)} \left(\int_{B(y,t)}|(e^{-t\sqrt{L}}-e^{-t^2L})f(z)|^2\frac{dz}{t^n}\right)^{\frac{1}{2}}+
\sup_{(y,t)\in \Gamma(x)} \left(\int_{B(y,t)}|e^{-t^2L}f(z)|^2\frac{dz}{t^n}\right)^{\frac{1}{2}}
\\&
=
\sup_{(y,t)\in \Gamma(x)} \left(\int_{B(y,t)}|(e^{-t\sqrt{L}}-e^{-t^2L})f(z)|^2\frac{dz}{t^n}\right)^{\frac{1}{2}}+\Ncal_{\hh}f(x)
=: \mathfrak{m}_{\pp}f(x)+\mathcal{N}_{\hh}f(x).
\end{align*}
After applying the subordination formula \eqref{FR} and Minkowski's integral inequality, we obtain that
\begin{align*}
\mathfrak{m}_{\pp}f(x)&\lesssim\sup_{t>0} \int_{0}^{\infty}e^{-u}u^{\frac{1}{2}}\left(\int_{B(x,2t)}|(e^{-\frac{t^2}{4u}L}-e^{-t^2L})f(y)|^2\frac{dy}{t^n}\right)^{\frac{1}{2}}\frac{du}{u}
\\&
\lesssim\sup_{t>0} \int_{0}^{\frac{1}{4}}u^{\frac{1}{2}}\left(\int_{B(x,2t)}|(e^{-\frac{t^2}{4u}L}-e^{-t^2L})f(y)|^2\frac{dy}{t^n}\right)^{\frac{1}{2}}\frac{du}{u}
\\&\qquad
+
\sup_{t>0} \int_{\frac{1}{4}}^{\infty}e^{-u}u^{\frac{1}{2}}\left(\int_{B(x,2t)}|(e^{-\frac{t^2}{4u}L}-e^{-t^2L})f(y)|^2\frac{dy}{t^n}\right)^{\frac{1}{2}}\frac{du}{u}=:I+II.
\end{align*}
We first deal with $I$. Take $p_-(L)<p_0<2$, and
apply the $L^{p_0}(\R^n)-L^2(\R^n)$ off-diagonal estimates satisfied by  $\{e^{-t^2L}\}_{t>0}$,
\begin{multline*}
I=\sup_{t>0} \int_{0}^{\frac{1}{4}}u^{\frac{1}{2}}\left(\int_{B(x,2t)}|e^{-\frac{t^2L}{2}}(e^{-\left(\frac{1}{4u}-\frac{1}{2}\right)t^2L}-e^{-\frac{t^2}{2}L})f(y)|^2\frac{dy}{t^n}\right)^{\frac{1}{2}}\frac{du}{u}
\\
\lesssim
\sup_{t>0} \sum_{j\geq 1}e^{-c4^j}\int_{0}^{\frac{1}{4}}u^{\frac{1}{2}}\left(\dashint_{B(x,2^{j+2}t)}|(e^{-\left(\frac{1}{4u}-\frac{1}{2}\right)t^2L}-e^{-\frac{t^2}{2}L})f(y)|^{p_0}dy\right)^{\frac{1}{p_0}}\frac{du}{u}.
\end{multline*}
Now, notice that when $0<u<\frac{1}{4}$, we have
\begin{multline*}
\left|\left(e^{-\left(\frac{1}{4u}-\frac{1}{2}\right)t^2L}-e^{-\frac{t^2}{2}L}\right)f(y)\right|\leq 2\int_{\frac{t}{\sqrt{2}}}^{t\sqrt{\frac{1}{4u}-\frac{1}{2}}}|r^2Le^{-r^2L}f(y)|\frac{dr}{r}
\leq 2\int_{\frac{t}{\sqrt{2}}}^{\frac{t}{2\sqrt{u}}}|r^2Le^{-r^2L}f(y)|\frac{dr}{r}
\\
\lesssim \log(u^{-\frac{1}{2}})^{\frac{1}{2}}
\left(\int_{0}^{\infty}|r^2Le^{-r^2L}f(y)|^2\frac{dr}{r}\right)^{\frac{1}{2}}=:\log(u^{-\frac{1}{2}})^{\frac{1}{2}}\, g_{\hh}f(y).
\end{multline*}
Therefore,
$$
I\lesssim  \sum_{j\geq 1}e^{-c4^j}\int_{0}^{\frac{1}{4}}\log(u^{-\frac{1}{2}})^{\frac{1}{2}}u^{\frac{1}{2}}\frac{du}{u}\sup_{t>0}
\left(\dashint_{B(x,2^{j+2}t)}| g_{\hh}f(y)|^{p_0}dy\right)^{\frac{1}{p_0}}\lesssim \mathcal{M}_{p_0}(g_{\hh}f)(x).
$$
On the other hand, for $\frac{1}{4}\leq u<\infty$,
\begin{align*}
\left|\left(e^{-\frac{t^2}{4u}L}-e^{-t^2L}\right)f(y)\right|\leq 2\int_{\frac{t}{2\sqrt{u}}}^{t}|r^2Le^{-r^2L}f(y)|\frac{dr}{r}
\lesssim 
\log(2\sqrt{u})^{\frac{1}{2}}\left(\int_{\frac{t}{2\sqrt{u}}}^{t}|r^2Le^{-r^2L}f(y)|^2\frac{dr}{r}\right)^{\frac{1}{2}}.
\end{align*}
Hence,
\begin{align*}
II&\lesssim
\sup_{t>0} \int_{\frac{1}{4}}^{\infty}e^{-u}\log(2\sqrt{u})^{\frac{1}{2}}u^{\frac{1}{2}}\left(\int_{B(x,2t)}
\int_{\frac{t}{2\sqrt{u}}}^{t}|r^2Le^{-r^2L}f(y)|^2\frac{dr}{r}
\frac{dy}{t^n}\right)^{\frac{1}{2}}\frac{du}{u}
\\&
\lesssim
\sup_{t>0} \int_{\frac{1}{4}}^{\infty}ue^{-u}
\left(\int_{\frac{t}{2\sqrt{u}}}^{t}\int_{B(x,2t)}|r^2Le^{-r^2L}f(y)|^2
\frac{dy}{t^n}\frac{dr}{r}\right)^{\frac{1}{2}}\frac{du}{u}
\\&
\lesssim
 \int_{\frac{1}{4}}^{\infty}e^{-u}\left(\int_{0}^{\infty}\int_{B(x,4\sqrt{u}r)}
|r^2Le^{-r^2L}f(y)|^2\frac{dy\,dr}{r^{n+1}}
\right)^{\frac{1}{2}}du
=
 \int_{\frac{1}{4}}^{\infty}e^{-u}\, \Scal_{\hh}^{4\sqrt{u}}f(x)du,
\end{align*}
recall the definition of $\Scal_{\hh}^{4\sqrt{u}}$ in \eqref{squarealpha} and \eqref{square-H-1}.
Gathering these estimates gives us, for $p_-(L)<p_0<2$, 
$$
\Ncal_{\pp}f(x)\lesssim \mathcal{M}_{p_0}(g_{\hh}f)(x)+\int_{\frac{1}{4}}^{\infty}e^{-u}\, \Scal_{\hh}^{4\sqrt{u}}F(x)\,du+\Ncal_{\hh}f(x), \quad \forall x\in \R^n.
$$
Let $w\in A_{\infty}$ and $p\in \mathcal{W}_w(p_-(L),p_+(L))$, taking norms on $L^p(w)$ and applying \cite[Proposition 3.29]{MartellPrisuelos}, we obtain, for $r>\max\{p/2,r_w\}$ and  $p_-(L)<p_0<2$,
\begin{multline*}
\|\Ncal_{\pp}f\|_{L^p(w)}\lesssim \|\mathcal{M}_{p_0}(g_{\hh}f)\|_{L^p(w)}+\int_{\frac{1}{4}}^{\infty}u^{\frac{nr}{2p}}e^{-u}\,du\, \|\Scal_{\hh}f\|_{L^p(w)}+\|\Ncal_{\hh}f\|_{L^p(w)}
\\
\lesssim 
\|\mathcal{M}_{p_0}(g_{\hh}f)\|_{L^p(w)}+ \|\Scal_{\hh}f\|_{L^p(w)}+\|\Ncal_{\hh}f\|_{L^p(w)}.
\end{multline*}
Now, taking $p_0$ close enough to $p_-(L)$ so that $w\in A_{\frac{p}{p_0}}$,  we have that the maximal operator $\mathcal{M}_{p_0}$ is bounded on $L^p(w)$. Besides, since $p\in \mathcal{W}_w(p_-(L),p_+(L))\subset \mathcal{W}_w(p_-(L),\infty)$, we have that $g_{\hh}$, $\mathcal{S}_{\hh}$, and $\mathcal{N}_{\hh}$ are bounded operators on $L^p(w)$,  (see \cite[Theorem 7.6, $(a)$]{AuscherMartell:III}, \cite[Theorem 1.12, $(a)$]{MartellPrisuelos}, and Proposition \ref{prop:acotacion-N}, part $(a)$, respectively). Consequently,
we conclude ($b$). \qed

\bigskip

We next establish Lemma \ref{lema:changeofangelN}, whose
proof follows similarly to that of \cite[Lemma 6.2]{HofmannMayboroda}.
Consider, for all $\kappa\geq 1$,
$$
\mathcal{N}^{\kappa}f(x):=\sup_{(y,t)\in \Gamma^{\kappa}(x)}\left(\int_{B(y,\kappa t)}|F(z,t)|^2\frac{dz}{t^n}\right)^{\frac{1}{2}},
$$
and
we simply write $\mathcal{N}$ when $\kappa=1$. 
\begin{lemma}\label{lema:changeofangelN}
Given $w\in A_{r}$, $0<p<\infty$, and $\kappa\geq 1$, 
$$
\|\mathcal{N}^{\kappa}f\|_{L^p(w)}\lesssim \kappa^{n\left(\frac{1}{2}+\frac{r}{p}\right)} \|\mathcal{N}f\|_{L^p(w)}.
$$
\end{lemma}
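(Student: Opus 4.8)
## Plan for proving Lemma \ref{lema:changeofangelN}

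The plan is to prove the pointwise bound $\mathcal{N}^{\kappa}f(x) \lesssim \kappa^{n/2}\,\big(\mathcal{M}_u(|\mathcal{N}f|^q)(x)\big)^{1/q}$ for a suitable exponent $q$, and then invoke the $L^{p/q}(w)$-boundedness of the uncentered maximal operator $\mathcal{M}_u$, which holds as soon as $w \in A_{p/q}$; choosing $q < p$ with $w \in A_{r} \subset A_{p/q}$ will give the claimed power $\kappa^{n(\frac12 + \frac r p)}$ after tracking constants carefully. First I would fix $x \in \R^n$ and a point $(y,t) \in \Gamma^{\kappa}(x)$, so that $|x - y| < \kappa t$. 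The ball $B(y,\kappa t)$ that appears in the definition of $\mathcal{N}^{\kappa}$ is comparable to a ball of radius $\sim \kappa t$ centered near $x$; the key geometric observation is that for a \emph{fixed scale} $t$, any point $z \in B(y,\kappa t)$ lies in a cone $\Gamma(\xi)$ of aperture one (at scale $t$) for every $\xi$ in a ball $B(z,t)$, hence for $\xi$ ranging over a set of measure $\gtrsim t^n$; and all such $\xi$ lie in a fixed ball $\widetilde{B}$ centered at $x$ of radius $\sim \kappa t$.

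Concretely, I would argue as follows. Write $I(y,t) := \big(\int_{B(y,\kappa t)} |F(z,t)|^2\, dz/t^n\big)^{1/2}$. Cover $B(y,\kappa t)$ by $\lesssim \kappa^n$ balls $\{B(z_m, t)\}_m$ of radius $t$ with bounded overlap, so that
\begin{align*}
I(y,t)^2 \lesssim \sum_m \frac{1}{t^n}\int_{B(z_m,t)} |F(z,t)|^2\, dz.
\end{align*}
For each fixed $m$, and for every $\xi \in B(z_m,t)$, one has $B(z_m,t) \subset B(\xi, 2t)$ and $(z_m, t) \in \Gamma(\xi)$ — more precisely the whole ``slice'' $B(z_m,t)\times\{t\}$ sits inside the region defining $\mathcal{N}f(\xi)$ up to a harmless dilation of the aperture (absorbed by comparing $\mathcal{N}$ at apertures $1$ and $2$, which are equivalent with dimensional constants, cf.\ the change-of-aperture results already used in the paper). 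Hence $\frac{1}{t^n}\int_{B(z_m,t)}|F(z,t)|^2\,dz \lesssim \mathcal{N}f(\xi)^2$ for every $\xi \in B(z_m,t)$, and averaging over such $\xi$,
\begin{align*}
\frac{1}{t^n}\int_{B(z_m,t)} |F(z,t)|^2\, dz \;\lesssim\; \frac{1}{|B(z_m,t)|}\int_{B(z_m,t)} \mathcal{N}f(\xi)^2\, d\xi.
\end{align*}
Since each $B(z_m,t) \subset \widetilde{B} := B(x, C\kappa t)$ and $|\widetilde{B}| \sim \kappa^n |B(z_m,t)|$, summing over the $\lesssim \kappa^n$ values of $m$ gives $I(y,t)^2 \lesssim \kappa^{2n}\, \frac{1}{|\widetilde{B}|}\int_{\widetilde{B}} \mathcal{N}f^2 \le \kappa^{2n}\, \mathcal{M}_u(\mathcal{N}f^2)(x)$ — but this only yields the exponent $\kappa^{n}$ outside, not $\kappa^{n/2}$. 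To get the sharp $\kappa^{n/2}$ I would instead not use $q=2$ but rather exploit that the overlap in the covering is bounded: rather than $\sum_m$ of the squared integrals crudely, I keep $\sum_m \frac{1}{t^n}\int_{B(z_m,t)}|F|^2 \lesssim \frac{1}{t^n}\int_{B(y,2\kappa t)}|F(z,t)|^2\,dz$ by bounded overlap, then bound the integrand pointwise: for $z \in B(y,2\kappa t)$, pick $\xi\in B(z,t)$; then $|F(z,t)|^2 \le t^n \mathcal{N}f(\xi)^2$, so $\frac{1}{t^n}\int_{B(y,2\kappa t)}|F(z,t)|^2\,dz \le \int_{B(y,2\kappa t)} \inf_{\xi\in B(z,t)}\mathcal{N}f(\xi)^2\,dz \le \int_{B(y,2\kappa t)}\frac{1}{|B(z,t)|}\int_{B(z,t)}\mathcal{N}f(\xi)^2\,d\xi\,dz$. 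Since $B(z,t)\subset B(x,C\kappa t)=\widetilde B$ and $|B(z,t)|\sim t^n$ while $|B(y,2\kappa t)|\sim (\kappa t)^n$, Fubini gives this is $\lesssim \kappa^n\, \frac{|\widetilde B|}{|\widetilde B|}\,\frac{1}{t^n}\cdot t^n \cdot \big(\text{avg of }\mathcal{N}f^2\text{ over }\widetilde B\big)$; tracking the ratio $|B(y,2\kappa t)|/t^n \sim \kappa^n$ against $1/|B(z,t)|\sim t^{-n}$ yields the cleaner bound $\mathcal{N}^\kappa f(x)^2 \lesssim \kappa^n\, \mathcal{M}_u(\mathcal{N}f^2)(x)$, i.e.\ $\mathcal{N}^\kappa f(x)\lesssim \kappa^{n/2}\,\mathcal{M}_u(\mathcal{N}f^2)(x)^{1/2}$.

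Having the pointwise estimate $\mathcal{N}^\kappa f(x) \lesssim \kappa^{n/2}\, \big(\mathcal{M}_u(|\mathcal{N}f|^{q})(x)\big)^{1/q}$ for an appropriate $q$ — here I would actually take a general exponent $q$ with $0<q<p$ and $q \le 2$ to make the geometric step work with a Hölder step when $q<2$, getting a power $\kappa^{n(\frac12 + \frac 1q - \frac 1q)} = \kappa^{n/2}$ from the slicing plus an extra $\kappa^{n/q}$ from enlarging the ball, for a total $\kappa^{n(\frac12 + \frac 1q)}$ — I then take $L^p(w)$ norms. Writing $\mathcal{N}^\kappa f(x) \lesssim \kappa^{n(\frac12+\frac1q)}\mathcal{M}_u(|\mathcal{N}f|^q)(x)^{1/q}$ and using that $\mathcal{M}_u$ is bounded on $L^{p/q}(w)$ whenever $w \in A_{p/q}$ (Proposition \ref{prop:weights}(vii)), we get
\begin{align*}
\|\mathcal{N}^\kappa f\|_{L^p(w)} \lesssim \kappa^{n(\frac12+\frac1q)}\, \big\|\mathcal{M}_u(|\mathcal{N}f|^q)\big\|_{L^{p/q}(w)}^{1/q} \lesssim \kappa^{n(\frac12+\frac1q)}\, \big\||\mathcal{N}f|^q\big\|_{L^{p/q}(w)}^{1/q} = \kappa^{n(\frac12+\frac1q)}\,\|\mathcal{N}f\|_{L^p(w)}.
\end{align*}
Finally, to land on the exponent stated in the lemma I choose $q$ so that $q < p$ and $w \in A_{p/q}$: since $w \in A_r$, any $q$ with $p/q \ge r$, i.e.\ $q \le p/r$, works; taking $q = p/r$ (note $r \ge 1$ so $q \le p$, and if $q>2$ one first reduces to the case $q\le 2$ by monotonicity, or simply runs the crude $q=2$ version which already suffices when $p/2\ge r$, handling the remaining range $r > p/2$ separately with $q = p/r < 2$) gives exponent $n(\frac12 + \frac1q) = n(\frac12 + \frac r p)$, exactly as claimed. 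The main obstacle is the geometric/combinatorial slicing step: getting the \emph{sharp} power $\kappa^{n/2}$ inside the cone (rather than the naive $\kappa^{n}$) requires being careful to use the bounded overlap of the covering of $B(y,\kappa t)$ by unit-scale balls and to average $\mathcal{N}f$ against the right ball, and then the extra $\kappa^{n/q}$ must be produced \emph{only} by the enlargement from $B(z,t)$ to the fixed ball $\widetilde B$ of radius $\sim\kappa t$ — this bookkeeping of which $\kappa$-powers come from where is the delicate part; everything after that is a routine Hölder-plus-maximal-function argument, and comparing $\mathcal{N}$ at apertures $1$ and $2$ is a standard (already-invoked) change-of-aperture fact with purely dimensional constants.
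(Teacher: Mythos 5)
Your strategy is genuinely different from the paper's. You aim for a pointwise Fefferman--Stein-type estimate $\mathcal{N}^{\kappa}f(x)\lesssim \kappa^{n(\frac12+\frac1q)}\big(\mathcal{M}_u((\mathcal{N}f)^q)(x)\big)^{1/q}$ followed by the weighted boundedness of $\mathcal{M}_u$ on $L^{p/q}(w)$ with $q=p/r$; the paper instead runs a good-$\lambda$/density argument: with $\gamma=1-(4\kappa)^{-n}$ it proves $\mathcal{N}^{\kappa}f\le(3\kappa)^{n/2}\lambda$ on the $\gamma$-density set $E^{*}_{\lambda}$ of $E_{\lambda}=\{\mathcal{N}f\le\lambda\}$ (covering $B(y,\kappa t)$ by at most $(3\kappa)^n$ balls of radius $t$ whose centers lie in $\Gamma^{2\kappa}(x)$, each meeting $E_{\lambda}$), and then only uses the weak-type bound $w(O^{*}_{\lambda})\lesssim(4\kappa)^{nr}w(O_{\lambda})$ together with a layer-cake integration. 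Both give the exponent $n(\frac12+\frac rp)$, and your route is viable, but two points in your write-up need repair. First, your ``cleaner'' derivation rests on the pointwise inequality $|F(z,t)|^2\le t^n\,\mathcal{N}f(\xi)^2$ for $\xi\in B(z,t)$, which is false: $\mathcal{N}f(\xi)$ controls \emph{averages} of $|F(\cdot,t)|^2$ over $t$-balls, never pointwise values. The fix is precisely your first computation, which you abandoned after a miscount: cover $B(y,\kappa t)$ by $\lesssim\kappa^n$ balls $B(z_m,t)$ with bounded overlap; for each $m$, $t^{-n}\int_{B(z_m,t)}|F(\cdot,t)|^2\,dz\le\inf_{\xi\in B(z_m,t)}\mathcal{N}f(\xi)^2\le\big(\dashint_{B(z_m,t)}(\mathcal{N}f)^q\big)^{2/q}\lesssim\kappa^{2n/q}\,\mathcal{M}_u((\mathcal{N}f)^q)(x)^{2/q}$ (enlarging $B(z_m,t)$ to $B(x,(2\kappa+1)t)$), and summing the $\lesssim\kappa^n$ terms already gives the power $\kappa^{\,n+2n/q}$ inside the square, i.e.\ exactly $\kappa^{n(\frac12+\frac1q)}$ after taking square roots; with bounded overlap and $q=2$ one in fact gets $\kappa^{n/2}$, not the $\kappa^{2n}$ you feared, and no pointwise control of $F$ is needed anywhere. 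Your constraint $q\le2$ is also unnecessary, since $\inf_B g\le(\dashint_B g^{q})^{1/q}$ for every $q>0$.

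Second, the endpoint $r=1$: taking $q=p/r$ you need $\mathcal{M}_u$ bounded on $L^{p/q}(w)=L^{r}(w)$, which by Proposition \ref{prop:weights} characterizes $A_r$ only for $r>1$; for $w\in A_1$ the strong $(1,1)$ bound fails, so your argument as written does not yield the stated constant in that case (only with $r$ replaced by some $r'>1$, hence a slightly larger power of $\kappa$). The paper's proof sidesteps this because it only invokes $\mathcal{M}:L^{r}(w)\rightarrow L^{r,\infty}(w)$, valid for all $r\ge1$, and it also works for all $0<p<\infty$ without any relation between $p$, $q$ and $r$. Since the lemma is applied in the paper with a fixed $\kappa$, this endpoint is harmless in practice, but to prove the lemma exactly as stated you should either treat $r=1$ by a weak-type/layer-cake variant (which essentially reproduces the paper's argument) or note explicitly that your proof covers $r>1$.
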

\begin{proof}
Consider
$
O_{\lambda}:=\{x\in \R^n: \mathcal{N}f(x)>\lambda\},\, E_{\lambda}:=\R^n\backslash O_{\lambda},
$ and, for $\gamma=1-\frac{1}{(4\kappa)^n}$, the set of $\gamma$-density $E_{\lambda}^*:=
\left\{x\in \R^n: \forall\, r>0,\,\frac{|E_{\lambda}\cap B(x,r)|}{|B(x,r)|}\geq\gamma\right\}$. Note that $O_{\lambda}^*:=\R^n\setminus E_{\lambda}^*=\{x\in \R^n:\mathcal{M}(\chi_{O_{\lambda}})(x)>1/(4\kappa)^n\}$.

We claim that
for every $\lambda>0$,
\begin{align}\label{claim:changeofanglesN}
\mathcal{N}^{\kappa}f(x)\leq (3\kappa)^{\frac{n}{2}}\lambda, \quad \textrm{for all }\quad x\in E^{*}_{\lambda}.
\end{align}
Assuming this, let $0<p<\infty$ and $w\in A_{r}$, $1\leq r<\infty$, so that $\mathcal{M}:L^{r}(w)\rightarrow L^{r,\infty}(w)$. Hence, we  have
\begin{multline*}
\|\mathcal{N}^{\kappa}f\|^p_{L^p(w)}=p\int_{0}^{\infty}\lambda^{p-1}w(\{x\in \R^n:\mathcal{N}^{\kappa}f(x)>\lambda\})\,d\lambda
\\
=
p(3\kappa)^{\frac{np}{2}}\int_{0}^{\infty}\lambda^{p-1}w(\{x\in \R^n:\mathcal{N}^{\kappa}f(x)>(3\kappa)^{\frac{n}{2}}\lambda\})\,d\lambda
\leq
p(3\kappa)^{\frac{np}{2}}\int_{0}^{\infty}\lambda^{p-1}w(O_\lambda^*)\,d\lambda
\\
\lesssim
p(3\kappa)^{\frac{np}{2}}(4k)^{nr}\int_{0}^{\infty}\lambda^{p-1}w(O_\lambda)\,d\lambda=
(3\kappa)^{\frac{np}{2}}
(4k)^{nr}\|\mathcal{N}f\|_{L^p(w)}^p,
\end{multline*}
which  finishes the proof.

So it just remains to show \eqref{claim:changeofanglesN}. First, note that if  $x\in E_{\lambda}^*$ then, for every $(y,t)\in \Gamma^{2\kappa}(x)$, $B(y,t)\cap E_{\lambda}\neq \emptyset$. To prove this, suppose by way of contradiction that $B(y,t)\subset O_{\lambda}$. Then, since $B(y,t)\subset B(x,3\kappa t)$,
$$
\mathcal{M}(\chi_{O_{\lambda}})(x)\geq \dashint_{B(x,3\kappa t)}\chi_{{O}_{\lambda}}(x)\,dx\geq\frac{|B(y,t)|}{|B(x,3\kappa t)|}=\frac{1}{(3\kappa)^n}>\frac{1}{(4\kappa)^n},
$$ 
which implies that $x\in O_{\lambda}^*$, a contradiction. Therefore, there exists $y_0\in B(y,t)$ (in particular $(y,t)\in \Gamma(y_0)$) such that $\mathcal{N}f(y_0)\leq \lambda$. Hence, for all  $(y,t)\in \Gamma^{2\kappa}(x)$, with $x\in E_{\lambda}^*$,
\begin{align}\label{menor-lambda}
\left(\int_{B(y,t)}|F(\xi,t)|^2\frac{d\xi}{t^n}\right)^{\frac{1}{2}}
\leq \sup_{(z,s)\in \Gamma(y_0)}
\left(\int_{B(z,s)}|F(\xi,s)|^2\frac{d\xi}{s^n}\right)^{\frac{1}{2}}=\Ncal f(y_0)\leq \lambda.
\end{align}

On the other hand, given $x\in E^*_{\lambda}$ and $(y,t)\in \Gamma^{\kappa}(x)$, we have that 
$B(y,\kappa t)\subset\bigcup_iB(y_i,t)$, where $\{B(y_i,t)\}_i$ is a collection of at most $(3\kappa)^n$ balls such that $y_i\in B(y,\kappa t)$ and then $|y_i-x|<2\kappa t$ (equivalently $(y_i,t)\in \Gamma^{2\kappa}(x)$). Thus,
$$
\int_{B(y,\kappa t)}|F(z,t)|^2\frac{dz}{t^n}\leq \sum_i
\int_{B(y_i,t)}|F(z,t)|^2\frac{dz}{t^n}
\leq (3\kappa)^n \lambda^2,
$$
where we have used \eqref{menor-lambda}, since $x\in E_{\lambda}^*$ and $(y_i,t)\in \Gamma^{2\kappa}(x)$.
Finally taking the supremum over all $(y,t)\in \Gamma^{\kappa}(x)$, we obtain \eqref{claim:changeofanglesN} as desired:
$$
\mathcal{N}^{\kappa}f(x)^2\leq (3\kappa)^n \lambda^2,\quad \forall\,x\in E_{\lambda}^*.
$$
\end{proof}

\subsection{Proof of Proposition \ref{prop:comparacion-N-G}}

We start by proving part ($a$).
Fix $w\in A_{\infty}$, $0<p<\infty$, and $f\in L^2(\mathbb{R}^n)$. For every $N>1$ and $\alpha\geq 1$, we define
\begin{align}\label{N-KN}
K_N:=\{(y,t)\in \R^{n+1}_+: y\in B(0,N), t\in (N^{-1},N)\}
\end{align}
and 
\begin{align*}
\Gcal_{\pp,N}^{\alpha}f(x):=\left(\int_{0}^{\infty}\int_{B(x,\alpha t)}\chi_{K_N}(y,t)|t\nabla_{y,t}e^{-t\sqrt{L}}f(y)|^2\frac{dy \ dt}{t^{n+1}}\right)^{\frac{1}{2}},
\end{align*}
when $\alpha=1$ we just write $\Gcal_{\pp,N}$.
Then,
$
\supp \Gcal_{\pp,N}^{\alpha}f\subset B(0,(\alpha+1)N)$ and, since the vertical square function $\left(\int_0^{\infty}|t\nabla_{y,t}e^{-t\sqrt{L}}f(y)|^2\frac{dt}{t}\right)^{\frac{1}{2}}$ is bounded on $L^2(\R^n)$, we have that $\|\Gcal_{\pp,N}^{\alpha}f\|_{L^p(w)}\leq C N^{\frac{n}{2}}\|f\|_{L^2(\mathbb{R}^n)} w(B(0,(\alpha+1)N))^{\frac{1}{p}}<\infty.
$

Following the ideas used in the proofs of \cite[Theorems 6.1 and 7.1]{HofmannMayboroda}.
For every $\lambda>0$, set
\begin{align*}
O_{\lambda}:=\{x\in \mathbb{R}^n : \Ncal_{\pp}^{\kappa}f(x)>\lambda\} \quad \textrm{and} \quad
E_{\lambda}:=\mathbb{R}^n\setminus O_{\lambda},
\end{align*}
where
$$
\Ncal_{\pp}^{\kappa}f(x)=\sup_{(y,t)\in \Gamma^{\kappa}(x)}\left(\int_{B(y,\kappa t)}|e^{-t\sqrt{L}}f(z)|^2\frac{dz}{t^n}\right)^{\frac{1}{2}},
$$
and $\kappa$ is some positive number that we will determine during the proof.
Besides, consider 
\begin{align*}
E^*_{\lambda}:=\left\{x\in \mathbb{R}^n: \forall r>0, \frac{|E_{\lambda}\cap B(x,r)|}{|B(x,r)|}\geq \frac{1}{2}\right\}, \qquad O^*_{\lambda}:=\mathbb{R}^n \setminus E_{\lambda}^*=\left\{x\in \mathbb{R}^n : \mathcal{M}(\chi_{O_{\lambda}})(x)>\frac{1}{2}\right\}.
\end{align*}
Since $O_{\lambda}$ is open,  $O_{\lambda}\subset O_{\lambda}^*$ and then  $E_{\lambda}^*\subset E_{\lambda}$.
Also, since $w\in A_{\infty}$, for $r>r_w$, we have that  $\mathcal{M}:L^r(w)\rightarrow L^{r,\infty}(w)$. Consequently
$
w(O^*_{\lambda})\leq C_w w(O_{\lambda}).
$
On the other hand, consider the set
$$
\widetilde{O}_{\lambda}:=\{x\in \mathbb{R}^n : \Gcal_{\pp,N}^{\alpha}f(x)> \lambda\}.
$$
Proceeding as in the proof of \cite[Proposition 3.2]{MartellPrisuelos}, part $(a)$, we can show that $\widetilde{O}_{\lambda}$ is open and, since $\|\Gcal_{\pp,N}^{\alpha}f\|_{L^p(w)}<\infty$, then $w(\widetilde{O}_{\lambda})<\infty$ which implies that $\widetilde{O}_{\lambda}\subsetneqq \R^n$. Hence, taking a Whitney decomposition of $\widetilde{O}_{\lambda}$, there exists a family of closed cubes $\{Q_j\}_{j\in \N}$ with disjoint interiors such that
$$
\bigcup_{j\in \N}Q_j=\widetilde{O}_{\lambda} \quad \textrm{and} \quad\textrm{diam}(Q_j)
\leq d(Q_j,\mathbb{R}^n\setminus \widetilde{O}_{\lambda})\leq 4\textrm{diam}(Q_j).
$$
We claim that there exists a positive constant $c_w$, depending on the weight, such that, for every $0<\gamma<1$ and $\alpha=12\sqrt{n}$,
\begin{align}\label{wqj}
w(\{x\in E_{\gamma\lambda}^* : \Gcal_{\pp,N}f(x)>2\lambda, \Ncal_{\pp}^{\kappa}f(x)\leq \gamma \lambda\})\leq C\gamma^{c_w}w(\{x\in \mathbb{R}^n : \Gcal_{\pp,N}^{\alpha}f(x)> \lambda\}).
\end{align}
Assuming this momentarily,
we would get
\begin{multline*}
w(\{x\in \mathbb{R}^n : \Gcal_{\pp,N}f(x)>2\lambda\})
\leq w(O_{\gamma\lambda}^*)
+
w(\{x\in E_{\gamma\lambda}^* : \Gcal_{\pp,N}f(x)>2\lambda, \Ncal_{\pp}^{\kappa}f(x)\leq \gamma\lambda\})
+
w(O_{\gamma\lambda})
\\
\leq C\gamma^{c_w}w(\{x\in \mathbb{R}^n : \Gcal_{\pp,N}^{\alpha}f(x)> \lambda\})
+
Cw(\{x\in \mathbb{R}^n : \Ncal_{\pp}^{\kappa}f(x)>\gamma\lambda\}).
\end{multline*}
Multiplying both sides of the previous inequality by $\lambda^{p-1}$ and integrating in $\lambda>0$, we would have
\begin{align*}
\|\Gcal_{\pp,N}f\|_{L^p(w)}^p\leq C\gamma^{c_w}\|\Gcal_{\pp,N}^{\alpha}f\|_{L^p(w)}^p+ C_{\gamma}\|\Ncal_{\pp}^{\kappa}f\|_{L^p(w)}^p.
\end{align*}
Then, applying \cite[Proposition 3.2]{MartellPrisuelos} and Lemma \ref{lema:changeofangelN} with $\mathcal{N}=\mathcal{N}_{\pp}$ we would obtain
\begin{align*}
\|\Gcal_{\pp,N}f\|_{L^p(w)}^p\leq C_{\alpha}\gamma^{c_w}\|\Gcal_{\pp,N}f\|_{L^p(w)}^p+ C_{\kappa,\gamma}\|\Ncal_{\pp}f\|_{L^p(w)}^p.
\end{align*}
Finally, since $\|\Gcal_{\pp,N}f\|_{L^p(w)}\leq \|\Gcal_{\pp,N}^{\alpha}f\|_{L^p(w)}<\infty$, taking $\gamma$ small enough such that $C_{\alpha}\gamma^{c_w}<\frac{1}{2}$, we would conclude that, for some constant $C>0$ uniform on $N$,
\begin{align*}
\|\Gcal_{\pp,N}f\|_{L^p(w)}\le C  \|\Ncal_{\pp}f\|_{L^p(w)}.
\end{align*}
This and the Monotone Convergence Theorem would readily lead to the desired estimate.
Therefore, to complete the proof we just need to show \eqref{wqj}. Notice that since $\Gcal_{\pp,N}f\leq\Gcal_{\pp,N}^{\alpha}f$, we have
\begin{align*}
&\left\{x\in E_{\gamma\lambda}^*  : \Gcal_{\pp,N}f(x)>2\lambda, \Ncal_{\pp}^{\kappa}f(x)\leq \gamma\lambda\right\}
\subset \bigcup_{j\in \N}
\left\{x\in E^*_{\gamma\lambda}\cap Q_j  : \Gcal_{\pp,N}f(x)>2\lambda, \Ncal_{\pp}^{\kappa}f(x)\leq \gamma\lambda\right\}.
\end{align*}
Consequently, since $w\in A_{\infty}$, to obtain \eqref{wqj} it is enough to show
\begin{align}\label{1qj}
|\{x\in E_{\gamma\lambda}^* \cap Q_j: \Gcal_{\pp,N}f(x)>2\lambda, \Ncal_{\pp}^{\kappa}f(x)\leq \gamma \lambda\}|\leq C\gamma^{2}|Q_j|.
\end{align}
To this end,
consider $u(y,t):=e^{-t\sqrt{L}}f(y)$ and
\begin{multline*}
\mathcal{G}_{\pp,1,j,N}f(x):=\left(\int_{\frac{\ell(Q_j)}{2}}^{\infty}
\int_{B(x,t)}\chi_{K_N}(y,t)\left|t\nabla_{y,t}u(y,t)\right|^2\frac{dy \, dt}{t^{n+1}}\right)^{\frac{1}{2}},
\\
 \textrm{and}\quad \mathcal{G}_{
 \pp,2,j,N}f(x):=\left(\int_0^{\frac{\ell(Q_j)}{2}}\int_{B(x,t)}\chi_{K_N}(y,t)\left|t\nabla_{y,t}u(y,t)\right|^2\frac{dy \, dt}{t^{n+1}}\right)^{\frac{1}{2}}.
\end{multline*}
We have that $\Gcal_{\pp,N}f\leq \Gcal_{\pp,1,j,N}f+\Gcal_{\pp,2,j,N}f$ and  that $\mathcal{G}_{\pp,1,j,N}f(x)\leq \lambda$ for all $x\in Q_j$. Indeed, notice that for each $j$, there exists $x_j\in \R^n\setminus \widetilde{O}_{\lambda}$ such that $d(x_j,Q_j)\leq 4\textrm{diam}Q_j$. Besides, if $(y,t)$ is such that $t\geq \frac{\ell(Q_j)}{2}$, $x\in Q_j$, and $y\in B(x,t)$, then
$$
|x_j-y|\leq |x_j-x|+|x-y|<5\sqrt{n}\ell(Q_j)+t\leq 11\sqrt{n} t.
$$
Hence, for $\alpha=12\sqrt{n}$ and for all $x\in Q_j$, we have
\begin{multline*}
\mathcal{G}_{\pp,1,j,N}f(x)^2=\int_{\frac{\ell(Q_j)}{2}}^{\infty}\int_{B(x,t)}\chi_{K_N}(y,t)\left|t\nabla_{y,t}u(y,t)\right|^2\frac{dy \, dt}{t^{n+1}}
\\
\leq \iint_{\Gamma^{\alpha}(x_j)}\chi_{K_N}(y,t)\left|t\nabla_{y,t}u(y,t)\right|^2\frac{dy \, dt}{t^{n+1}}
=\Gcal_{\pp,N}^{\alpha}f(x_j)^2\leq \lambda^2.
\end{multline*} 
This and Chebychev's inequality imply that
\begin{multline}\label{GP22}
|\{x\in E^*_{\gamma\lambda}\cap Q_j : \Gcal_{\pp,N}f(x)>2\lambda, \Ncal_{\pp}^{\kappa}f(x)\leq \gamma\lambda\}|
\\
\leq |\{x\in E^*_{\gamma\lambda}\cap Q_j : \mathcal{G}_{\pp,2,j,N}f(x)>\lambda\}|\leq \frac{1}{\lambda^2}\int_{E^*_{\gamma\lambda}\cap Q_j}\mathcal{G}_{\pp,2,j,N}f(x)^2 dx
\\
\leq\frac{1}{\lambda^2}\int_{E^*_{\gamma\lambda}\cap Q_j}\int_0^{\frac{\ell(Q_j)}{2}}\int_{B(x,t)}\left|t\nabla_{y,t}u(y,t)\right|^2\frac{dy \, dt}{t^{n+1}}\, dx=:\frac{1}{\lambda^2}\int_{E^*_{\gamma\lambda}\cap Q_j}\Gcal_{\pp,2}f(x)^2\, dx.
\end{multline}
To estimate the last integral above, for  $0<\varepsilon<\frac{\ell(Q_j)}{2}$, consider the function
\begin{align}\label{def:GP2}
\mathcal{G}_{\pp,2,\varepsilon}f(x):=\left(\int_{\varepsilon}^{\frac{\ell(Q_j)}{2}}\int_{B(x, t)}
\left|t\nabla_{y,t}u(y,t)\right|^ 2\frac{dy \, dt}{t^{n+1}}\right)^{\frac{1}{2}}.
\end{align}
Besides, for $\beta >0$, consider the region
$$
\mathcal{R}^{\varepsilon,\ell(Q_j),\beta}(E^*_{\gamma\lambda}\cap Q_j):=\bigcup_{x\in E^*_{\gamma\lambda}\cap Q_j}\left\{(y,t)\in \mathbb{R}^n\times (\beta\varepsilon, \beta \ell(Q_j)) : |y-x|<\frac{t}{\beta}\right\},
$$
and we set
\begin{align}\label{matrix:B}
B(y):= \left( \begin{array}{cc}
A(y) & 0 \\
0 & 1 \end{array} \right),
\end{align}
where $A$ is as in \eqref{matrix:A}. Then, we have that
there exist
$0<\widetilde{\lambda}\le\widetilde{\Lambda}<\infty$ such that
\begin{align}\label{ellipticity:1}
\widetilde{\lambda}\,|\xi|^2
\le
\Re B(x)\,\xi\cdot\bar{\xi}
\qquad
\textrm{and}\qquad
|B(x)\,\xi\cdot \bar{\zeta}|
\le
\widetilde{\Lambda}\,|\xi|\,|\zeta|,
\end{align}
for all $\xi,\zeta\in\mathbb{C}^{n+1}$ and almost every $x\in \R^n$. Moreover, we have that
\begin{align}\label{div:B}
\partial_{t}u(y,t)
=
\div_{y,t}(tB(y)\,\nabla_{y,t}u(y,t)).
\end{align}
Finally notice that
\begin{align*}
\Gcal_{P,2,\varepsilon}f(x)^2
\lesssim 
\int_{\beta\varepsilon}^{\beta\ell(Q_j)}\int_{|x-y|<\frac{t}{\beta}}|t\nabla_{y,t}u(y,t)|^2\frac{dy\,dt}{t^{n+1}},\quad \textrm{for all}\quad \beta\in (2^{-1},1).
\end{align*}
From this we immediately see
\begin{align}\label{GP2-epsilon}
\int_{E^*_{\gamma\lambda}\cap Q_j}\Gcal_{P,2,\varepsilon}f(x)^2dx
\lesssim 
\iint_{\mathcal{R}^{\varepsilon,\ell(Q_j),\beta}(E^*_{\gamma\lambda}\cap Q_j)}&t|\nabla_{y,t}u(y,t)|^2 dy\,dt,\quad \textrm{for all}\quad \beta\in (2^{-1},1).
\end{align}
Applying \eqref{ellipticity:1} and  integration by parts in the last integral above, we have that
\begin{align*}
&\iint_{\mathcal{R}^{\varepsilon,\ell(Q_j),\beta}(E^*_{\gamma\lambda}\cap Q_j)}t|\nabla_{y,t}u(y,t)|^2 dy\,dt
\lesssim \Re
\iint_{\mathcal{R}^{\varepsilon,\ell(Q_j),\beta}(E^*_{\gamma\lambda}\cap Q_j)}tB(y)\nabla_{y,t}u(y,t)\cdot \overline{\nabla_{y,t}u(y,t)} dy\,dt
\\
&\quad
=\frac{1}{2}
 \iint_{\mathcal{R}^{\varepsilon,\ell(Q_j),\beta}(E^*_{\gamma\lambda}\cap Q_j)}\Big[tB(y)\nabla_{y,t}u(y,t)\cdot \overline{\nabla_{y,t}u(y,t)}+ t\overline{B(y)\nabla_{y,t}u(y,t)}\cdot \nabla_{y,t}u(y,t)\Big] dy\,dt
\\
&\quad
=C \iint_{\mathcal{R}^{\varepsilon,\ell(Q_j),\beta}(E^*_{\gamma\lambda}\cap Q_j)}\Big[-\div_{y,t}(tB(y)\nabla_{y,t}u(y,t)) \overline{u(y,t)}- \overline{\div_{y,t}(tB(y)\nabla_{y,t}u(y,t))} u(y,t)\Big] dy\,dt
\\
&\quad \qquad
+ \int_{\partial \mathcal{R}^{\varepsilon,\ell(Q_j),\beta}(E^*_{\gamma\lambda}\cap Q_j)}\Big[tB(y)\nabla_{y,t}u(y,t)\cdot \upsilon_{y,t}(y,t)\overline{u(y,t)}+ t\overline{B(y)\nabla_{y,t}u(y,t)}\cdot \upsilon_{y,t}(y,t)u(y,t)\Big] d\sigma,
\end{align*}
where $\upsilon_{y,t}$ is the outer unit normal associated with the domain of integration.

Now, using \eqref{div:B} in the first integral, \eqref{ellipticity:1} 
in the second one, and the fact that $|\upsilon_{y,t}(y,t)|=1$, we obtain
\begin{align*}
\iint_{\mathcal{R}^{\varepsilon,\ell(Q_j),\beta}(E^*_{\gamma\lambda}\cap Q_j)}&t|\nabla_{y,t}u(y,t)|^2 dy\,dt
\\&
\lesssim \left|
 \iint_{\mathcal{R}^{\varepsilon,\ell(Q_j),\beta}(E^*_{\gamma\lambda}\cap Q_j)}\Big[-\partial_tu(y,t)\cdot \overline{u(y,t)}- \partial_t\overline{u(y,t)}\cdot u(y,t)\Big] dy\,dt\right|
\\
&\hspace*{3.5cm}+ \int_{\partial \mathcal{R}^{\varepsilon,\ell(Q_j),\beta}(E^*_{\gamma\lambda}\cap Q_j)}t|\nabla_{y,t}u(y,t)||u(y,t)| d\sigma
\\
&=
 \left|-\iint_{\mathcal{R}^{\varepsilon,\ell(Q_j),\beta}(E^*_{\gamma\lambda}\cap Q_j)}\partial_t|u(y,t)|^2 dy\,dt\right|
+ \int_{\partial \mathcal{R}^{\varepsilon,\ell(Q_j),\beta}(E^*_{\gamma\lambda}\cap Q_j)}t|\nabla_{y,t}u(y,t)||u(y,t)| d\sigma.
\end{align*}
Then, applying again integration by parts and Cauchy-Schwartz's inequality, we conclude that
\begin{align}\label{integrationbypart}
\iint_{\mathcal{R}^{\varepsilon,\ell(Q_j),\beta}(E^*_{\gamma\lambda}\cap Q_j)}&t|\nabla_{y,t}u(y,t)|^2 dy\,dt
\\&\nonumber
\leq
 \int_{\partial \mathcal{R}^{\varepsilon,\ell(Q_j),\beta}(E^*_{\gamma\lambda}\cap Q_j)}|u(y,t)|^2d\sigma
+ \int_{\partial \mathcal{R}^{\varepsilon,\ell(Q_j),\beta}(E^*_{\gamma\lambda}\cap Q_j)}t|\nabla_{y,t}u(y,t)||u(y,t)| d\sigma
\\&\nonumber
\lesssim
\int_{\partial \mathcal{R}^{\varepsilon,\ell(Q_j),\beta}(E^*_{\gamma\lambda}\cap Q_j)}|u(y,t)|^2d\sigma
+ \int_{\partial \mathcal{R}^{\varepsilon,\ell(Q_j),\beta}(E^*_{\gamma\lambda}\cap Q_j)}|t\nabla_{y,t}u(y,t)|^2 d\sigma.
\end{align}
Now, observe that
\begin{align*}
\partial \mathcal{R}^{\varepsilon,\ell(Q_j),\beta}(E^*_{\gamma\lambda}\cap Q_j)&=\left\{(y,t)\in \R^{n+1}_+:d(y,Q_j\cap E^{*}_{\gamma\lambda})=\frac{t}{\beta},\, \beta\varepsilon\leq t\leq \beta\ell(Q_j)\right\}
\\&\quad
\bigcup \left\{y\in \R^{n}:d(y,Q_j\cap E^{*}_{\gamma\lambda})<\varepsilon\right\}\times \left\{\beta\varepsilon\right\}
\\&\quad
\bigcup \left\{y\in \R^{n}:d(y,Q_j\cap E^{*}_{\gamma\lambda})<\ell(Q_j)\right\}\times \left\{\beta\ell(Q_j)\right\}
\\&
=:\mathcal{H}(\beta)\cup\mathcal{T}(\varepsilon)\times \left\{\beta\varepsilon\right\}\cup\mathcal{T}(\ell(Q_j))\times \left\{\beta\ell(Q_j)\right\}.
\end{align*}
and for every function $h:\R^{n+1}_+\rightarrow \R$
\begin{align*}
\int_{\partial \mathcal{R}^{\varepsilon,\ell(Q_j),\beta}(E^*_{\gamma\lambda}\cap Q_j)}h\,d\sigma
=
\int_{\mathcal{H}(\beta)}h\,d\sigma
+
\int_{\mathcal{T}(\varepsilon)}h(y,\beta\varepsilon\,)dy
+
\int_{\mathcal{T}(\ell(Q_j))}h(y,\beta \ell(Q_j))\,dy.
\end{align*}
Besides, consider
\begin{align*}
\mathcal{B}^{\varepsilon,\ell(Q_j)}(E^*_{\gamma\lambda}\cap Q_j):=\left\{(y,t)\in \mathbb{R}^n\times (2^{-1}\varepsilon,\ell(Q_j)) : 2^{-1}d(y,E^*_{\gamma\lambda}\cap Q_j)<t<d(y,E^*_{\gamma\lambda}\cap Q_j)\right\}
\end{align*}
and
  $F(y,t):=\frac{d(y,Q_j\cap E^*_{\gamma\lambda})}{t}$. We have that
$$
|JF(y,t)|\leq\frac{1}{|t|}+\frac{d(y,Q_j\cap E^*_{\gamma\lambda})}{t^2}, \quad t\neq 0,\,\textrm{ for a.e.} \,y\in \R^n,
$$ 
where $JF$ denotes the Jacobian of $F$. 
Then, integrating in $\beta\in (1/2,1)$ and applying the coarea formula
\begin{align*}
\int_{\frac{1}{2}}^1
\int_{\mathcal{H}(\beta)}hd\sigma\,d\beta
&\leq 
\int_{\frac{1}{2}}^1
\int_{F^{-1}(1/\beta)}h\chi_{\mathcal{B}^{\varepsilon,\ell(Q_j)}(E^*_{\gamma\lambda}\cap Q_j)}d\sigma\,d\beta
\\&
\leq
\iint_{\R^{n+1}_+}h(y,t)\chi_{\mathcal{B}^{\varepsilon,\ell(Q_j)}(E^*_{\gamma\lambda}\cap Q_j)}(y,t)|JF(y,t)|dy\,dt
\\&
\leq\iint_{\mathcal{B}^{\varepsilon, \ell(Q_j)}(E^*_{\gamma\lambda}\cap Q_j)}h(y,t)|JF(y,t)|dy\,dt
\\&
\leq \iint_{\mathcal{B}^{\varepsilon, \ell(Q_j)}(E^*_{\gamma\lambda}\cap Q_j)}h(y,t)
\frac{1}{t}\left(1+\frac{d(y,Q_j\cap E^*_{\gamma\lambda})}{t}\right)dy\,dt
\\&
\lesssim \iint_{\mathcal{B}^{\varepsilon, \ell(Q_j)}(E^*_{\gamma\lambda}\cap Q_j)}h(y,t)
\frac{dy\,dt}{t}.
\end{align*}
On the other hand, doing the change of variables $\beta\varepsilon=t$, we have 
\begin{align*}
\int_{\frac{1}{2}}^1
\int_{\mathcal{T}(\varepsilon)}h(y,\beta\varepsilon)dy\,d\beta
=
\int_{\frac{\varepsilon}{2}}^{\varepsilon}\frac{1}{\varepsilon}
\int_{\mathcal{T}(\varepsilon)}h(y,t)dy\,dt
\lesssim
 \iint_{\mathcal{B}^{\varepsilon}(E^*_{\gamma\lambda}\cap Q_j)}h(y,t)\frac{dy\,dt}{t},
\end{align*}
where
\begin{align*}
\mathcal{B}^{\varepsilon}(E^*_{\gamma\lambda}\cap Q_j):=\left\{(y,t)\in \mathbb{R}^n\times (2^{-1}\varepsilon,\varepsilon) : d(y,E^*_{\gamma\lambda}\cap Q_j)<2t\right\}.
\end{align*}
Analogously
\begin{align*}
\int_{\frac{1}{2}}^1
\int_{\mathcal{T}(\ell(Q_j))}h(y,\beta \ell(Q_j))dy\,d\beta
\lesssim
 \iint_{\mathcal{B}^{\ell(Q_j)}(E^*_{\gamma\lambda}\cap Q_j)}h(y,t)\frac{dy\,dt}{t},
\end{align*}
where
\begin{align*}
\mathcal{B}^{\ell(Q_j)}(E^*_{\gamma\lambda}\cap Q_j):=\left\{(y,t)\in \mathbb{R}^n\times (2^{-1}\ell(Q_j),\ell(Q_j)) : d(y,E^*_{\gamma\lambda}\cap Q_j)<2t\right\}.
\end{align*}
Therefore, applying the previous estimates with  $h(y,t)=|u(y,t)|^2$,
and $h(y,t)=|t\nabla_{y,t}u(y,t)|^2$, and also \eqref{GP2-epsilon} and \eqref{integrationbypart}, we have
\begin{align}\label{below}
&\int_{E^*_{\gamma\lambda}\cap Q_j}\Gcal_{P,2,\varepsilon}f(x)^2dx
=2\int_{\frac{1}{2}}^1
\int_{E^*_{\gamma\lambda}\cap Q_j}\Gcal_{P,2,\varepsilon}f(x)^2dx\,d\beta
\\\nonumber&\qquad
\lesssim 
\int_{\frac{1}{2}}^1
\int_{\partial \mathcal{R}^{\varepsilon,\ell(Q_j),\beta}(E^*_{\gamma\lambda}\cap Q_j)}|u(y,t)|^2d\sigma\,d\beta
+ \int_{\frac{1}{2}}^1\int_{\partial \mathcal{R}^{\varepsilon,\ell(Q_j),\beta}(E^*_{\gamma\lambda}\cap Q_j)}|t\nabla_{y,t}u(y,t)|^2 d\sigma\,d\beta
\\\nonumber&\qquad
\lesssim 
\iint_{\widetilde{\mathcal{B}}(E^*_{\gamma\lambda}\cap Q_j)}|u(y,t)|^2\frac{dy\,dt}{t}
+ \iint_{\widetilde{\mathcal{B}}(E^*_{\gamma\lambda}\cap Q_j)}|t\nabla_{y,t}u(y,t)|^2 \frac{dy\,dt}{t}
\\\nonumber&\qquad
=:I+II,
\end{align}
where
$$
\widetilde{\mathcal{B}}(E^*_{\gamma\lambda}\cap Q_j):=
\mathcal{B}^{\varepsilon}(E^*_{\gamma\lambda}\cap Q_j)
\cup
\mathcal{B}^{\ell(Q_j)}(E^*_{\gamma\lambda}\cap Q_j)
\cup
\mathcal{B}^{\varepsilon,\ell(Q_j)}(E^*_{\gamma\lambda}\cap Q_j).
$$
Hence,
\begin{multline*}
I\lesssim \iint_{\mathcal{B}^{\varepsilon}(E^*_{\gamma\lambda}\cap Q_j)}|u(y,t)|^2\frac{dy \, dt}{t}
+\iint_{\mathcal{B}^{\ell(Q_j)}(E^*_{\gamma\lambda}\cap Q_j)}|u(y,t)|^2\frac{dy \, dt}{t}
\\
+
\iint_{\mathcal{B}^{\varepsilon,\ell(Q_j)}(E^*_{\gamma\lambda}\cap Q_j)}|u(y,t)|^2\frac{dy \, dt}{t}
=:I_1+I_2+I_3,
\end{multline*}
and analogously $II\lesssim II_1+II_2+II_3.$
We start estimating $I_1$. For every $(y,t)\in \mathcal{B}^{\varepsilon}(E^*_{\gamma\lambda}\cap Q_j)$, there exists $y_0\in E^*_{\gamma\lambda}\cap Q_j$ such that
$y\in B(y_0,2t)$. Besides, since $y_0\in E^*_{\gamma\lambda}\cap Q_j$, from the definition of
$E^*_{\gamma\lambda}$ we have that $|E_{\gamma\lambda}\cap B(y_0,2t)|\geq Ct^n$ and then
$|E_{\gamma\lambda}\cap B(y,4t)|\geq Ct^n$. Thus, we have for $\kappa\geq 4$,
\begin{multline*}
I_1
\lesssim \iint_{\mathcal{B}^{\varepsilon}(E^*_{\gamma\lambda}\cap Q_j)}\int_{E_{\gamma\lambda}\cap B(y,4t)}|u(y,t)|^2dx\frac{dy \, dt}{t^{n+1}}
\lesssim \int_{\frac{\varepsilon}{2}}^{\varepsilon}\int_{8Q_j\cap E_{\gamma\lambda}}\int_{ B(x,4t)}|u(y,t)|^2\frac{dy}{t^{n}}\frac{dx \, dt}{t}
\\
\leq \int_{\frac{\varepsilon}{2}}^{\varepsilon}\int_{8Q_j\cap E_{\gamma\lambda}}\Ncal_{\pp}^{\kappa}f(x)^2\frac{dx \, dt}{t}
\lesssim |Q_j|(\gamma\lambda)^2.
\end{multline*}
The second inequality follows applying Fubini and noticing that 
$(y,t)\in \mathcal{B}^{\varepsilon}(E^*_{\gamma\lambda}\cap Q_j)$ and $x\in E_{\gamma\lambda}\cap B(y,4t)$ imply that  $x\in E_{\gamma\lambda}\cap 8Q_j$, $y\in B(x,4t)$, and $t\in \left(\frac{\varepsilon}{2},\varepsilon\right)$, where we recall that $\varepsilon<\frac{\ell(Q_j)}{2}$. 
Similarly, for $II_1$,
\begin{align*}
II_1\lesssim \iint_{\mathcal{B}^{\varepsilon}(E^*_{\gamma\lambda}\cap Q_j)}\int_{E_{\gamma\lambda}\cap B(y,4t)}|t\nabla_{y,t}u(y,t)|^2dx\frac{dy \, dt}{t^{n+1}}
\lesssim \int_{8Q_j\cap E_{\gamma\lambda}}\int_{\frac{\varepsilon}{2}}^{\varepsilon}\int_{ B(x,4t)}|\nabla_{y,t}u(y,t)|^2\frac{dy \, dt}{t^{n-1}}dx.
\end{align*}
%%%%%%%%%%%%%%%%%%%Caccioppoli
Now, consider the elliptic operator $\widetilde{L}u(y,t):=-\div_{y,t}\left(B(y)\nabla_{y,t}u(y,t)\right)$, (where $B$ is the matrix defined in \eqref{matrix:B}). Besides, for each $x\in 8Q_j\cap E_{\gamma\lambda}$, cover the truncated cone $\Gamma^{\frac{\varepsilon}{2},\varepsilon,4}(x)
:=\{(y,t)\in \R^{n}\times (\varepsilon/2,\varepsilon): |x-y|<4t\}$ by dyadic cubes $R_i\subset \R^{n+1}_+$, of side length $\ell_{\varepsilon}$, $\frac{\varepsilon}{16\sqrt{n}}<\ell_{\varepsilon}\leq \frac{\varepsilon}{8\sqrt{n}}$. Then, the family $\{2R_i\}_{i\in \N}$ has control overlap.
Hence
since $\widetilde{L}u=0$, apply Caccioppoli's inequality and obtain for $\kappa\geq 5$
\begin{multline*}
\int_{\frac{\varepsilon}{2}}^{\varepsilon}\int_{ B(x,4t)}|\nabla_{y,t}u(y,t)|^2\frac{dy \, dt}{t^{n-1}}
\lesssim \frac{1}{\varepsilon^{n-1}}\sum_{i=1}^{M}\iint_{R_i}
|\nabla_{y,t}u(y,t)|^2dy \, dt
\\
\lesssim \frac{1}{\varepsilon^{n+1}}\sum_{i=1}^M\iint_{2R_i}
|u(y,t)|^2dy \, dt
\lesssim \frac{1}{\varepsilon^{n+1}}\int_{\frac{\varepsilon}{4}}^{2\varepsilon}\int_{B(x,5t)}
|u(y,t)|^2 dy \, dt
\\
\lesssim
 \frac{1}{\varepsilon^{n+1}}\int_{\frac{\varepsilon}{4}}^{2\varepsilon}t^n\,dt\,\Ncal_{\pp}^{\kappa}f(x)^2\lesssim
\Ncal_{\pp}^{\kappa}f(x)^2.
\end{multline*}
Consequently,
$
II_1\lesssim \int_{8Q_j\cap E_{\gamma\lambda}}\Ncal_{\pp}^{\kappa}f(x)^2 \, dx\lesssim |Q_j|(\gamma\lambda)^2.
$
Arguing in the same way, we obtain that $
I_2\lesssim |Q_j|(\gamma\lambda)^2$ and $II_2\lesssim  |Q_j|(\gamma\lambda)^2.$

Finally, for $I_3$ and $II_3$, we decompose $\mathbb{R}^n\setminus (E_{\gamma\lambda}^*\cap Q_j)=O^*_{\gamma\lambda}\cup (\mathbb{R}^n\setminus Q_j)$, (which is an open set since the cubes $Q_j$ are closed and $O^*_{\gamma\lambda}$ is open), into a family of Whitney balls $\{B(x_k,r_k)\}_{k=0}^{\infty}$, such that $\bigcup_{k=0}^{\infty}B(x_k,r_k)=O^*_{\gamma\lambda}\cup (\R^n\setminus Q_j)$, and for some constants
$0<c_1<c_2<1$ and $c_3\in \N$, $c_1d(x_k,E^*_{\gamma\lambda}\cap Q_j)\leq r_k\leq c_2
d(x_k,E^*_{\gamma\lambda}\cap Q_j)$, and $\sum_{k=0}^{\infty}\chi_{B(x_k,r_k)}(x)\leq c_3$, for all $x\in \R^n$. Besides, consider the set
$$
\widetilde{K}:=\{k : d(x_k, E_{\gamma\lambda}^*\cap Q_j))\leq 2(1-c_2)^{-1}\ell(Q_j)\}.
$$
We are going to see that
\begin{align}\label{claim:II3}
\mathcal{B}^{\varepsilon,\ell(Q_j)}(E^*_{\gamma\lambda}\cap Q_j)\subset \bigcup_{k\in \widetilde{K}}B(x_k,r_k)\times [r_k(c_2^{-1}-1)/2,r_k(c_1^{-1}+1)].
\end{align}
Indeed, for $(y,t)\in \mathcal{B}^{\varepsilon.\ell(Q_j)}(E^*_{\gamma\lambda}\cap Q_j)$, we have that $\varepsilon/2<t<\ell(Q_j)$, $y\in \mathbb{R}^n\setminus (E^*_{\gamma\lambda}\cap Q_j)$, and
\begin{align}\label{t0-tildek}
2^{-1}d(y,E^*_{\gamma\lambda}\cap Q_j)< t< d(y,E^*_{\gamma\lambda}\cap Q_j).
\end{align}
Then, there exists $k$ such that $y\in B(x_k,r_k)$. We see that $k\in \widetilde{K}$ and $r_k(c_2^{-1}-1)/2\leq t\leq r_k(c_1^{-1}+1)$.
On the one hand, we have
$$
d(y,E^*_{\gamma\lambda}\cap Q_j)\leq |y-x_k|+d(x_k,E^*_{\gamma\lambda}\cap Q_j)\leq r_k+c_1^{-1}r_k=(1+c_1^{-1})r_k,
$$
and, on the other hand,
$$
d(y,E^*_{\gamma\lambda}\cap Q_j)\geq d(x_k,E^*_{\gamma\lambda}\cap Q_j)-|y-x_k|\geq (r_kc_2^{-1}-r_k)=(c_2^{-1}-1)r_k.
$$
Therefore, by \eqref{t0-tildek}, we have that $t\in [r_k(c_2^{-1}-1)/2,r_k(c_1^{-1}+1)]$. 
From this and recalling that $t<\ell(Q_j)$, we have
\begin{multline*}
d(x_k,E^*_{\gamma\lambda}\cap Q_j)\leq |y-x_k|+d(y,E^*_{\gamma\lambda}\cap Q_j)\leq r_k+2\ell(Q_j)
\\
\leq
\frac{2t}{(c_2^{-1}-1)}+2\ell(Q_j)
\leq 2(1-c_2)^{-1}\ell(Q_j),
\end{multline*}
which in turn gives us that $k\in \widetilde{K}$.
Moreover, note that for every $k\in \widetilde{K}$, we have that
\begin{align}\label{Bsubsetkinktilde}
B(x_k,r_k)\subset C(c_2)Q_j,\quad \textrm{with}\quad C(c_2):=4(1-c_2)^{-1}(c_2+1)+1.
\end{align}
 Indeed, note that since $E^*_{\gamma\lambda}\cap Q_j\subset Q_j$ then $d(x_k,Q_j)\leq d(x_k,E^*_{\gamma\lambda}\cap Q_j)$. Hence, for
$x_0\in B(x_k,r_k)$ and $x_{Q_j}$ being the center of $Q_j$, we have, 
$$
|x_0-x_{Q_j}|_{\infty}\leq |x_0-x_k|_{\infty}+|x_k-x_{Q_j}|_{\infty}
\leq
r_k+\left(2(1-c_2)^{-1}+\frac{1}{2}\right)\ell(Q_j)
\leq (4(1-c_2)^{-1}(c_2+1)+1)\frac{\ell(Q_j)}{2}.
$$
Now, since $E_{\gamma \lambda}^*\subset E_{\gamma\lambda}$ then
$$
d(x_k,Q_j\cap E_{\gamma\lambda})\leq d(x_k,E^*_{\gamma\lambda}\cap Q_j)\leq c_1^{-1}r_k\leq \frac{2c_2}{c_1(1-c_2)}t,
$$
which implies that, for $\kappa>\frac{2c_2}{c_1(1-c_2)}$ there exists $\widetilde{x}\in
Q_j\cap E_{\gamma\lambda}$ such that $|\widetilde{x}-x_k|<\kappa t$, then
$$
\int_{B\left(x_k,\frac{2c_2}{1-c_2}t\right)}|u(y,t)|^2\frac{dy}{t^{n}}\leq
\int_{B(x_k,\kappa t)}|u(y,t)|^2\frac{dy}{t^{n}}\leq \Ncal_{\pp}^{\kappa}f(\widetilde{x})\leq (\gamma\lambda)^2.
$$
Therefore, by \eqref{Bsubsetkinktilde},
we have
\begin{multline*}
I_3\leq \sum_{k\in \widetilde{K}}\int_{\frac{r_k(c_2^{-1}-1)}{2}}^{r_k(c_1^{-1}+1)}\int_{B(x_k,r_k)}|u(y,t)|^2\frac{dy \, dt}{t}
\lesssim \sum_{k\in \widetilde{K}}r_k^n\int_{\frac{r_k(c_2^{-1}-1)}{2}}^{r_k(c_1^{-1}+1)}\int_{B(x_k,\frac{2c_2}{1-c_2}t)}|u(y,t)|^2\frac{dy \, dt}{t^{n+1}}
\\
\lesssim (\gamma\lambda)^2\sum_{k\in \widetilde{K}}r_k^n\lesssim(\gamma\lambda)^2\left|\bigcup_{k\in \widetilde{K}}B(x_k,r_k)\right|\lesssim |Q_j|(\gamma\lambda)^2.
\end{multline*}
Similarly,  arguing as in the estimate of $II_1$ (taking $\kappa$ larger if necessary), we conclude that $
II_3\lesssim |Q_j|(\gamma\lambda)^{2}.
$
Gathering the estimates obtained for $I$ and $II$ gives us that
\begin{align*}
\int_{E^*_{\gamma\lambda}\cap Q_j}\mathcal{G}_{\pp,2,\varepsilon}f(x)^2 dx \leq C |Q_j|(\gamma\lambda)^2,
\end{align*}
with $C$ independent of $\varepsilon$. Now, recall the definitions of $\mathcal{G}_{\pp,2}$ and $\mathcal{G}_{\pp,2,\varepsilon}$ in \eqref{GP22} and \eqref{def:GP2} respectively. Then, let $\varepsilon\rightarrow 0$ and obtain 
\begin{align*}
\int_{E^*_{\gamma\lambda}\cap Q_j}\mathcal{G}_{\pp,2}f(x)^2 dx
\leq C |Q_j|(\gamma\lambda)^2.
\end{align*}
This, together with  \eqref{GP22}, yields \eqref{1qj}.

\medskip

In order to complete the proof of Proposition \ref{prop:comparacion-N-G}, we need to establish part $(b)$. The argument follows the lines of \cite[Theorem 6.1]{HofmannMayboroda} and the proof of  part ($a$), so we only sketch the main changes.
Consider, for $\alpha\geq 1$, for each $N>1$, $K_N$ as in \eqref{N-KN} and
 $$
 \Grm_{\hh,N}^{\alpha}f(x):=\left(\iint_{\Gamma^{\alpha}(x)}\chi_{K_N}(y,t)\left|t\nabla_ye^{-t^2L}f(y)\right|^2\frac{dy \, dt}{t^{n+1}}\right)^{\frac{1}{2}},
 $$
 we write $\Grm_{\hh,N}$ when $\alpha=1$.
Notice that $\supp  \Grm_{\hh,N}^{\alpha}f\subset B(0,(\alpha+1)N)$ and much as before
$$
\|\Grm_{\hh,N}^{\alpha}f\|_{L^p(w)}\leq C \|f\|_{L^2(\R^n)}N^{\frac{n}{2}}w(B(0,(\alpha+1)N))^{\frac{1}{p}}<\infty.
$$
Hence,  it is enough to show part $(b)$ with $\Grm_{\hh,N}$ in place of $\Grm_{\hh}$
with constants uniform in $N$. 
We follow the proof of part $(a)$,
 replacing $\Gcal_{\pp,N}^{\alpha}$ and $\Ncal_{\pp}$ with $\Grm_{\hh,N}^{\alpha}$ and $\Ncal_{\hh}$, respectively, ($\Gcal_{\pp,N}$ with $\Grm_{\pp,N}$ when $\alpha=1$). We also need to replace  $u(y,t)$ with
 $v(y,t):=e^{-t^2L}f(y)$ and $t\nabla_{y,t}u(y,y)$ with $t\nabla_yv(y,t)$.
We also use the ellipticity of the matrix $A$ (see \eqref{matrix:Aproperties}) instead of the properties of the block matrix $B$ defined in \eqref{matrix:B}. Then, we have that 
\begin{align*}
\int_{E^*_{\gamma\lambda}\cap Q_j}\Grm_{\hh,2,\varepsilon}f(x)^2 dx
\lesssim  \iint_{\widetilde{\mathcal{B}}(E^*_{\gamma\lambda}\cap Q_j)}|v(y,t)|^2 \, dy \, dt+ \iint_{\widetilde{\mathcal{B}}(E^*_{\gamma\lambda}\cap Q_j)}t|\nabla_{y}v(y,t)|^2 \, dy \, dt
=:\widetilde{I}+\widetilde{II}.
\end{align*}
From here the proof proceeds much as the proof of part $(a)$: 
term $\widetilde{I}$ is estimated as term $I$, and term  $\widetilde{II}$ as term
${II}$ but, in this case, as in the proof of \cite[Theorem 6.1]{HofmannMayboroda}, we need to use the following parabolic Caccioppoli inequality (see \cite[Lemma 2.8]{HofmannMayboroda}):
\begin{lemma}
Suppose $\partial_tf=-Lf$ in $I_{2r}(x_0,t_0),$ where $I_r(x_0,t_0)=B(x_0,r)\times [t_0-cr^2,t_0], t_0>4cr^2$ and $c>0$. Then, there exists $C=C(\lambda,\Lambda,c)>0$ such that
$$
\iint_{I_r(x_0,t_0)}|\nabla_xf(x,t)|^2 dx \, dt\leq \frac{C}{r^2}\iint_{I_{2r}(x_0,t_0)}|f(x,t)|^2 dx \, dt.
$$
\end{lemma}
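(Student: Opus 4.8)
The statement is precisely \cite[Lemma 2.8]{HofmannMayboroda}, so one option is simply to quote it; if a self-contained argument is preferred, the plan is the classical Caccioppoli scheme adapted to the parabolic, one-sided-in-time geometry. The idea is to test the weak form of $\partial_tf=\div(A\nabla f)$ against $\bar f\,\eta^2$ for a suitable cutoff $\eta$, integrate by parts in $t$ and in $x$, and absorb the full gradient term by exploiting the ellipticity and boundedness of $A$ in \eqref{matrix:Aproperties} together with Young's inequality.

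Concretely, I would first fix $\eta(x,t)=\phi(x)\psi(t)$ with $0\le\eta\le1$, $\eta\equiv1$ on $I_r(x_0,t_0)$, $\supp\eta\subset B(x_0,2r)\times(t_0-4cr^2,t_0]$, $|\nabla_x\eta|\lesssim r^{-1}$ and $|\partial_t\eta|\lesssim r^{-2}$; the distinguishing feature of the parabolic version is that $\eta$ need not vanish near the top slice $\{t=t_0\}$ but must vanish near the bottom slice $\{t=t_0-4cr^2\}$ (and the hypothesis $t_0>4cr^2$ guarantees that $I_{2r}(x_0,t_0)$ lies inside the open set where the equation is available). Next I would multiply the equation by $\bar f\,\eta^2$, integrate over $B(x_0,2r)\times[t_0-4cr^2,\tau]$ for $\tau\le t_0$, and take real parts. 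The time term, after integration by parts in $t$ and using that $\eta$ vanishes at the bottom, produces
\begin{align*}
\Re\iint(\partial_tf)\bar f\,\eta^2
&=\tfrac12\int_{B(x_0,2r)}|f(x,\tau)|^2\eta(x,\tau)^2\,dx-\tfrac12\iint|f|^2\,\partial_t(\eta^2)\,dx\,dt,
\end{align*}
the first term being nonnegative and hence discardable. The spatial term, after integration by parts in $x$ (the lateral boundary integral vanishing since $\supp\eta$ is compact in $x$), equals
\begin{align*}
-\Re\iint A\nabla f\cdot\overline{\nabla(f\eta^2)}
&=-\Re\iint\eta^2\,A\nabla f\cdot\overline{\nabla f}-\Re\iint 2\eta\,\bar f\,A\nabla f\cdot\nabla\eta,
\end{align*}
and by \eqref{matrix:Aproperties} the first piece is $\le-\lambda\iint\eta^2|\nabla f|^2$, while the cross term is $\le\tfrac\lambda2\iint\eta^2|\nabla f|^2+C_{\lambda,\Lambda}\iint|\nabla\eta|^2|f|^2$ by Young's inequality.

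Assembling these identities and absorbing $\tfrac\lambda2\iint\eta^2|\nabla f|^2$ into the left-hand side yields $\iint\eta^2|\nabla f|^2\lesssim\iint(|\partial_t(\eta^2)|+|\nabla\eta|^2)|f|^2\lesssim r^{-2}\iint_{I_{2r}(x_0,t_0)}|f|^2$; restricting the left-hand side to $I_r(x_0,t_0)$, where $\eta\equiv1$, and taking $\tau=t_0$ gives the claim with $C=C(\lambda,\Lambda,c)$. The one genuine obstacle — the reason this is stated as a separate lemma — is the justification of the integration by parts in the time variable: since $f$ is only a \emph{weak} solution with $\nabla f\in L^2_{\mathrm{loc}}$, the function $\bar f\,\eta^2$ is not an admissible test function at the pointwise level, so one must either invoke the standard abstract result for functions in $L^2(I;H^1)$ with $\partial_tf\in L^2(I;H^{-1})$ or, more elementarily, run the computation on the Steklov time-averages $f_h(x,t)=h^{-1}\int_t^{t+h}f(x,s)\,ds$ (which solve a regularized equation) and pass to the limit $h\to0$; the complex-valuedness of $f$ and of $A$ is harmless once real parts are taken, and the bound in \eqref{matrix:Aproperties} enters only through the cross term.
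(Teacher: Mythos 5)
Your proposal is correct: the paper itself gives no proof of this lemma and simply quotes it as \cite[Lemma 2.8]{HofmannMayboroda}, exactly as your first option does. Your self-contained sketch (cutoff vanishing at the bottom time slice, testing against $\bar f\,\eta^2$, taking real parts, using the accretivity and boundedness in \eqref{matrix:Aproperties} with Young's inequality, and justifying the time integration by parts via Steklov averages) is the standard argument and is sound.
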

\qed

\begin{remark}\label{remark:classesoffunctions-N}
Following the explanation of \cite[Remark 4.22]{MartellPrisuelos}, one can see that  Proposition \ref{prop:comparacion-N-G} holds for all functions $f\in L^q(w)$ with $w\in A_\infty$ and $q\in \mathcal{W}_{w}(p_-(L), p_+(L))$. Details are left to the interested reader.
\end{remark}

\subsection{Characterization of the weighted Hardy spaces associated with $\Ncal_{\hh}$
and $\Ncal_{\pp}$}

The proof of Theorem \ref{thm:hardychartzNontangential} requires several steps. The first one consists in obtaining that the $L^1(w)$ norms of the non-tangential maximal functions applied to $\mol$s are uniformly controlled.

\begin{proposition}\label{prop:contro-mol-NT}
Let $w\in A_{\infty}$, let $p\in \mathcal{W}_w(p_-(L),p_+(L))$, $\varepsilon>0$, and $M\in \N$ such that $M>\frac{n}{2}\left(r_w-\frac{1}{p_-(L)}\right)$, and let $\mm$ be a $\mol$. There hold:
\begin{list}{$(\theenumi)$}{\usecounter{enumi}\leftmargin=1cm \labelwidth=1cm\itemsep=0.2cm\topsep=.2cm \renewcommand{\theenumi}{\alph{enumi}}}
\item $
\|\Ncal_{\hh}\mm\|_{L^1(w)}+\|\Ncal_{\pp}\mm\|_{L^1(w)}\leq C.
$

\item For all $f\in \mathbb{H}_{L,p,\varepsilon,M}^1(w)$, 
$
\|\Ncal_{\hh}f\|_{L^1(w)}+\|\Ncal_{\pp}f\|_{L^1(w)}\lesssim \|f\|_{\mathbb{H}_{L,p,\varepsilon,M}^1(w)}.
$

\end{list}

\end{proposition}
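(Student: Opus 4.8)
We first reduce part $(b)$ to part $(a)$, exactly as in the proof of Proposition \ref{prop:contro-mol-SF}, part $(b)$: given $f\in\mathbb{H}_{L,p,\varepsilon,M}^1(w)$, choose a $\p$ of $f$, say $f=\sum_i\lambda_i\mm_i$ with $\sum_i|\lambda_i|\le2\|f\|_{\mathbb{H}_{L,p,\varepsilon,M}^1(w)}$; since the series converges in $L^p(w)$ and $\Ncal_{\hh},\Ncal_{\pp}$ are sublinear and bounded on $L^p(w)$ by Proposition \ref{prop:acotacion-N} (here $p\in\mathcal{W}_w(p_-(L),p_+(L))\subset\mathcal{W}_w(p_-(L),\infty)$), one obtains $\Ncal_{\hh}f\le\sum_i|\lambda_i|\Ncal_{\hh}\mm_i$ and $\Ncal_{\pp}f\le\sum_i|\lambda_i|\Ncal_{\pp}\mm_i$ a.e., and taking $L^1(w)$ norms and invoking part $(a)$ finishes part $(b)$. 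So the whole matter is part $(a)$. From now on fix $\mm$, a $\mol$ with associated cube $Q$, set $\ell=\ell(Q)$, and---as in the proof of Proposition \ref{prop:acotacion-T}---pick $p_-(L)<p_0<2$ close enough to $p_-(L)$ that $w\in A_{p/p_0}$ and \eqref{choice-M} holds; write $\mm=\sum_{i\ge1}\mm_i$ with $\mm_i=\mm\,\chi_{C_i(Q)}$ and $h_i=\big((\ell^2L)^{-M}\mm\big)\chi_{C_i(Q)}$, with the notation of \eqref{100}.

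For $\Ncal_{\hh}$ I would run the scheme of Proposition \ref{prop:acotacion-T}, replacing the conical square function by the maximal function and the $t$-integral by a supremum in $t$. For each $i$, split $\R^n=16Q_i\cup\bigcup_{j\ge4}C_j(Q_i)$. On $16Q_i$, H\"older's inequality, the $L^p(w)$-boundedness of $\Ncal_{\hh}$ and \eqref{astast} (with $k=0$) give $\|\Ncal_{\hh}\mm_i\|_{L^1(16Q_i,w)}\lesssim w(Q_i)^{1/p'}\|\mm_i\|_{L^p(w)}\lesssim2^{-i\varepsilon}$, which is summable and handles all scales $t$ at once near $Q_i$. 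On $C_j(Q_i)$ with $j\ge4$, split the supremum defining $\Ncal_{\hh}\mm_i(x)$ into $t<\ell$ and $t\ge\ell$. When $t<\ell$, the ball $B(y,t)$ in \eqref{nontangential1} lies at distance $\gtrsim2^{i+j}\ell$ from $\supp\mm_i$, so the $L^{p_0}$-$L^2$ (exponential) off-diagonal bounds for $\{e^{-t^2L}\}$ and Lemma \ref{lemma:m-h} give a bound $\lesssim t^{-n/p_0}e^{-c4^{i+j}\ell^2/t^2}2^{-i\varepsilon}w(Q_i)^{-1}|Q_i|^{1/p_0}$, whose supremum over $t<\ell$ (the exponential beating the power of $t$) is $\lesssim e^{-c4^{i+j}}2^{-i\varepsilon}2^{in/p_0}w(Q_i)^{-1}$; multiplying by $w(2^{j+1}Q_i)$ and using \eqref{doublingcondition} this is summable in $i,j$. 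When $t\ge\ell$, I would reproduce the $A_Q/B_Q$ splitting of Proposition \ref{prop:acotacion-T}: with $A_Q(\ell^2L)^M=\sum_{k=1}^MC_{k,M}(k\ell^2L)^Me^{-k\ell^2L}$, the term $e^{-t^2L}A_Q(\ell^2L)^Mh_i$ carries $(\ell/t)^{2M}$ times the exponential off-diagonal decay of $\{(s^2L)^Me^{-s^2L}\}$, while $e^{-t^2L}B_Q\mm_i$ is controlled by the derivative identity \eqref{AB}; in both cases the supremum over $t\ge\ell$ plays the role of the $t$-integral in Proposition \ref{prop:acotacion-T} and produces, after multiplying by $w(2^{j+1}Q_i)$ and using \eqref{doublingcondition}, the same bound $\lesssim2^{-j(2M+n/p_0-nr_wp_0/p_-(L))}2^{-i(2M+\varepsilon)}$, summable by \eqref{choice-M}. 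Collecting all contributions yields $\|\Ncal_{\hh}\mm\|_{L^1(w)}\lesssim1$.

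The bound for $\Ncal_{\pp}$ is more delicate, since the bare Poisson semigroup only enjoys off-diagonal estimates of polynomial---and low---order, too weak to control the small-$t$ part of $\Ncal_{\pp}\mm$ far from $Q$ when $n\ge3$. I would instead use the pointwise splitting from the proof of Proposition \ref{prop:acotacion-N}, part $(b)$, $\Ncal_{\pp}\mm(x)\le\mathfrak{m}_{\pp}\mm(x)+\Ncal_{\hh}\mm(x)$, together with the pointwise estimate established there (for $p_-(L)<p_0<2$), $\mathfrak{m}_{\pp}\mm(x)\lesssim\mathcal{M}_{p_0}(g_{\hh}\mm)(x)+\int_{1/4}^\infty e^{-u}\,\Scal_{\hh}^{4\sqrt u}\mm(x)\,du$, where $g_{\hh}\mm(x)=\big(\int_0^\infty|t^2Le^{-t^2L}\mm(x)|^2\,dt/t\big)^{1/2}$ and $\Scal_{\hh}^{\alpha}$ is the conical square function of aperture $\alpha$ from \eqref{squarealpha}, \eqref{square-H-1}. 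The term $\Ncal_{\hh}\mm$ has $L^1(w)$-norm $\lesssim1$ by the previous step. For the integral term, Minkowski's inequality, the change of angles for conical square functions in $L^1(w)$ (\cite{MartellPrisuelos}) and Proposition \ref{prop:contro-mol-SF}, part $(a)$, give $\big\|\int_{1/4}^\infty e^{-u}\Scal_{\hh}^{4\sqrt u}\mm\,du\big\|_{L^1(w)}\lesssim\int_{1/4}^\infty e^{-u}u^{N_w}\,du\;\|\Scal_{\hh}\mm\|_{L^1(w)}\lesssim1$ for a suitable exponent $N_w$. The remaining point, and the heart of the proof, is $\|\mathcal{M}_{p_0}(g_{\hh}\mm)\|_{L^1(w)}\lesssim1$; since $\mathcal{M}_{p_0}$ is not bounded on $L^1(w)$ this requires a molecular-type argument on $g_{\hh}\mm$ itself: the near part over $16Q_i$ is absorbed by the $L^p(w)$-boundedness of $\mathcal{M}_{p_0}\circ g_{\hh}$ (available since $w\in A_{p/p_0}$ and $g_{\hh}$ is bounded on $L^p(w)$ for $p\in\mathcal{W}_w(p_-(L),p_+(L))$), while the far part over $C_j(Q_i)$ is handled by the exponential off-diagonal decay of $\{t^2Le^{-t^2L}\}$ (for the part of $g_{\hh}$ with $t<\ell$) and the $A_Q/B_Q$ cancellation (for the part with $t\ge\ell$), together with \eqref{astast} and \eqref{doublingcondition}, giving bounds summable in $i$ and $j$. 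This yields $\|\Ncal_{\pp}\mm\|_{L^1(w)}\lesssim1$ and completes part $(a)$.

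The main obstacle is exactly this last estimate. For $\Ncal_{\hh}$ the argument is an almost verbatim transcription of Proposition \ref{prop:acotacion-T}, the only two changes being that a supremum in $t$ replaces the $t$-integral and that the contributions near $Q$ are absorbed by the $L^p(w)$-boundedness of the operator rather than by a square-function bound. For $\Ncal_{\pp}$, the weak off-diagonal decay of the Poisson semigroup rules out such a direct argument in high dimensions, forcing the detour through $\mathfrak{m}_{\pp}$ and $\mathcal{M}_{p_0}\circ g_{\hh}$; establishing the molecular estimate for $\mathcal{M}_{p_0}\circ g_{\hh}$---which is not a conical square function, so Proposition \ref{prop:contro-mol-SF} does not apply directly---is the technical core, and it is there that the hypothesis $M>\frac n2\big(r_w-\frac1{p_-(L)}\big)$ (through its strengthened form \eqref{choice-M}) is what makes the double series in $i$ and $j$ converge.
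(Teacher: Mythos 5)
Your reduction of part $(b)$ to part $(a)$ and your treatment of $\Ncal_{\hh}$ are sound and essentially reproduce the paper's argument (split at $t\lessgtr\ell(Q)$, absorb the local pieces by the $L^p(w)$-boundedness of $\Ncal_{\hh}$ from Proposition \ref{prop:acotacion-N}, and far from $Q$ use the off-diagonal estimates together with the $A_Q/B_Q$ decomposition and \eqref{AB}, closing the sums via \eqref{choice-M}); likewise your handling of the $u\geq 1/4$ piece of the subordination formula via change of angles and Proposition \ref{prop:contro-mol-SF} is exactly what the paper does. The gap is in what you yourself call the heart of the proof: the claim $\|\mathcal{M}_{p_0}(g_{\hh}\mm)\|_{L^1(w)}\lesssim 1$ is not just unproved, it is false in general. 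Since $g_{\hh}\mm\geq 0$ is not identically zero, for $x$ far from $Q$ one has $\mathcal{M}_{p_0}(g_{\hh}\mm)(x)\gtrsim \|g_{\hh}\mm\|_{L^{p_0}(4Q)}\,|x-x_Q|^{-n/p_0}$, and $|x|^{-n/p_0}$ is never integrable at infinity (this is the classical fact that the Hardy--Littlewood maximal function of a nontrivial function is not in $L^1$); taking $w\equiv 1\in A_\infty$ already kills the estimate, and for a general $A_r$ weight the annular masses $w(2^{j+1}Q_i)$ can grow like $2^{jnr}$ while the decay you can extract from the maximal function is capped at $2^{-jn/p_0}$, so no choice of $M$ helps. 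The reason no ``molecular-type argument on $g_{\hh}\mm$'' can rescue this is structural: the $A_Q/B_Q$ cancellation produces decay in the scale variable $r$ relative to $\ell(Q)$, not spatial decay of $g_{\hh}\mm$ near $Q$, and once you have majorized the truncated quantity $\bigl(\int_{t/\sqrt{2}}^{\infty}|r^2Le^{-r^2L}\mm|^2\frac{dr}{r}\bigr)^{1/2}$ by the full vertical square function $g_{\hh}\mm$ and applied the global operator $\mathcal{M}_{p_0}$, the coupling between the averaging scale $t$ and the truncation $r\gtrsim t$ is irretrievably lost.

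That coupling is precisely what the paper keeps. After subordination it bounds the $0<u<1/4$ piece by $\sum_{l\geq1}e^{-c4^l}\bigl(F_{1,l}\mm+F_{2,l}\mm\bigr)$, where $F_{1,l}$ and $F_{2,l}$ are non-tangential objects: suprema over $(y,t)\in\Gamma(x)$ (with $t\leq\ell(Q)$, resp.\ $t>\ell(Q)$) of $L^{p_0}$-averages over $B(y,2^{l+1}t)$ of \emph{truncated} vertical square functions. For $t>\ell(Q)$ the truncation $r>t/\sqrt{2}>\ell(Q)$ allows one to write the integrand in terms of $\widetilde{\mm}=(\ell(Q)^2L)^{-M}\mm$ with the factor $(\ell(Q)/r)^{4M}$ (the square function $\mathfrak{g}_{\hh,t}$), and it is this factor—used \emph{before} any global maximal operator enters—that yields the $2^{-j(2M+\frac{n}{p_0}-n\widehat{r})}$ decay beating $w(2^{j+1}Q_i)$; the operator $\mathcal{M}_{p_0}$ is only invoked on the local pieces (over $2^{l+3}Q_i$), where its $L^p(w)$-boundedness suffices. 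You should redo the estimate of the term $I$ along these lines rather than through $\mathcal{M}_{p_0}(g_{\hh}\mm)$.
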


\begin{proof}
Assuming part $(a)$, the proof of part $(b)$ is similar to that of Proposition \ref{prop:contro-mol-SF}, part ($b$), but applying Proposition \ref{prop:acotacion-N} instead of \cite[Theorems 1.12 and 1.13]{MartellPrisuelos}.

Let us prove part $(a)$. Fix $w\in A_{\infty}$, $p\in \mathcal{W}_w(p_-(L),p_+(L))$, $\varepsilon>0$, $M\in \N$ such that $M>\frac{n}{2}\left(r_w-\frac{1}{p_-(L)}\right)$. Then, take  $\mm$ a $\mol$, and $Q$ a cube associated with $\mm$. Besides we fix $p_0$, $q$, and $\widehat{r}$ with $p_-(L)<p_0<\min\{2,p\}\leq \max\{2,p\}<q<p_+(L)$ and $\widehat{r}>r_w$ so that $w\in A_{\frac{p}{p_0}}\cap RH_{\left(\frac{q}{p}\right)'}$ and 
$M>\frac{n}{2}\left(\widehat{r}-\frac{1}{p_0}\right)$.

We start by dealing with $\Ncal_{\hh}$.
For every $x\in \R^n$, we have
\begin{multline*}
\Ncal_{\hh}\mm(x)\leq \left(\sup_{(y,t)\in \Gamma(x),\, 0<t\leq \ell(Q)}\int_{B(y,t)}
|e^{-t^2L}\mm(z)|^2\frac{dz}{t^n}\right)^{\frac{1}{2}}
\\
+\left(\sup_{(y,t)\in \Gamma(x),\,t>\ell(Q)}\int_{B(y,t)}
|e^{-t^2L}\mm(z)|^2\frac{dz}{t^n}\right)^{\frac{1}{2}}=:F_1\mm(x)+F_2\mm(x).
\end{multline*}
Besides, recalling the notation introduced in \eqref{100}, we can write $\mm=\sum_{i\geq 1}\mm\chi_{C_i(Q)}=:\sum_{i\geq 1}\mm_i.$
Hence,
    \begin{align}\label{splitting}
   \|F_1\mm\|_{L^1(w)}\lesssim
 \sum_{i\geq 1}\|\chi_{16Q_i}F_1\mm_i\|_{L^1(w)}
  +
 \sum_{i\geq 1}  \sum_{j\geq 4}
    \|\chi_{C_j(Q_i)}F_1\mm_i\|_{L^1(w)}
=:\sum_{i\geq 1}I_i+\sum_{i\geq 1}\sum_{j\geq
4}I_{ji}.
    \end{align}
  To estimate $I_i$, we apply H\"older's inequality,   
 Proposition \ref{prop:acotacion-N}, and \eqref{astast} for $k=0$:
\begin{align}\label{NIi}
I_{i}
\lesssim w(Q_i)^{\frac{1}{p'}}\|\Ncal_{\hh}\mm_i\|_{L^p(w)}\lesssim w(Q_i)^{\frac{1}{p'}}\|\mm_i\|_{L^p(w)}\leq 2^{-i\varepsilon}.
\end{align}
As for $I_{ji}$, note that for every $x\in C_{j}(Q_i)$,  
$0<t\leq\ell(Q)$, and $(y,t)\in \Gamma(x)$, we have that
$
B(y,t)\subset 2^{j+2}Q_i\setminus 2^{j-1}Q_i.
$
Then, applying that $\{e^{-t^2L}\}_{t>0}\in \mathcal{F}_{\infty}(L^{p_0}\rightarrow L^2)$ and Lemma \ref{lemma:m-h}, we get
 \begin{multline*}
    F_1	\mm(x)
    \leq
    \left(\sup_{0<t\leq \ell(Q)}\int_{2^{j+1}Q_i\setminus 2^{j-1}Q_i}
|e^{-t^2L}\mm_i(z)|^2\frac{dz}{t^n}\right)^{\frac{1}{2}}
\\
\leq
    \sup_{0<t\leq\ell(Q)}t^{-\frac{n}{p_0}}
    e^{-c\frac{4^{j+i}\ell(Q)^2}{t^2}}\|\mm_i\|_{L^{p_0}(\mathbb{R}^n)}
  \lesssim
  w(Q_i)^{-1}e^{-c4^{j+i}}.
  \end{multline*}
 Therefore, taking the norm in $L^1(w)$ in the previous expression and using that $w\in A_{\infty}$, we obtain that
$ I_{ji}
\lesssim e^{-c4^{j+i}}.
$
This, \eqref{splitting}, and \eqref{NIi} yield
    $
\|F_1\mm\|_{L^1(w)}
\leq C.
   $
   
We turn now to estimate the norm in $L^1(w)$ of $F_2\mm$. Considering
$
B_Q:=(I-e^{-\ell(Q)^2L})^M$, $A_Q:=I-B_Q
$, and $\widetilde{\mm}:=(\ell(Q)^2L)^{-M}\mm$,
and noticing that we can write $\widetilde{\mm}=\sum_{i\geq 1}\widetilde{\mm}\chi_{C_i(Q)}=:\sum_{i\geq 1}\widetilde{\mm}_i$. Then, for every $x\in \R^n$,
\begin{align*}
\mm(x)=B_Q\mm(x)+A_Q\mm(x)= \sum_{i\geq 1}	\left(B_Q\mm_i(x)+\sum_{k=1}^{M}C_{k,M}(k\ell(Q)^2L)^Me^{-k\ell(Q)^2L}\widetilde{\mm}_i(x)\right).
\end{align*} 
Besides, proceeding as in \eqref{NIi} and applying the fact that, for every $1\leq k\leq M$, the operators  $(k\ell(Q)^2L)^Me^{-k\ell(Q)^2L}$ and $B_Q$ are bounded on $L^p(w)$ (see \cite{AuscherMartell:II}), we have that
\begin{align}\label{F2local}
\sum_{i\geq 1}\left(\left\|\chi_{16Q_i}F_2B_Q\mm_i\right\|_{L^1(w)}+\sum_{k=1}^M\left\|\chi_{16Q_i}F_2(k\ell(Q)^2L)^Me^{-k\ell(Q)^2L}\widetilde{\mm}_i\right\|_{L^1(w)}\right)
\leq C.
\end{align}

Next, consider $\theta_M:=\sqrt{M+1}$ and note that, for every $j\geq 4$, $i\geq 1$, $x\in C_{j}(Q_i)$, $\ell(Q)/\theta_M<t\leq 2^{j-3}\ell(Q_i)/\theta_M$, and $(y,\theta_M t)\in \Gamma(x)$, we have that
$B(y,\theta_Mt)\subset 2^{j+2}Q_i\setminus 2^{j-1}Q_i$.
Therefore, since $\{e^{-t^2L}\}_{t>0}\in\mathcal{F}_{\infty}(L^{p_0}-L^2)$ and by the $L^{p_0}(\R^n)-L^{p_0}(\R^n)$ off-diagonal estimates satisfied by the family $\{e^{-t^2L}B_Q\}_{t>0}$ (see \eqref{AB}), applying \cite[Lemma 2.1]{MartellPrisuelos} (see also \cite[Lemma 2.3]{HofmannMartell}), and Lemma \ref{lemma:m-h}, we have
\begin{align*}
F_2B_Q\mm_i(x)
&\lesssim \|\mm_i\|_{L^{p_0}(\R^n)}
\left(\sup_{\frac{\ell(Q)}{\theta_M}<t\leq \frac{2^{j-3}\ell(Q_i)}{\theta_M}}
\left(\frac{\ell(Q)}{t}\right)^{2M}t^{-\frac{n}{p_0}}e^{-c\frac{4^{j+i}\ell(Q)^2}{t^2}}
+\sup_{t>\frac{2^{j-3}\ell(Q_i)}{\theta_M}}\left(\frac{\ell(Q)}{t}\right)^{2M}t^{-\frac{n}{p_0}}\right)
\\&
\lesssim w(Q_i)^{-1}\
2^{-i\left(2M+\varepsilon\right)}2^{-j\left(2M+\frac{n}{p_0}\right)}
.
\end{align*}
Then, using \eqref{doublingcondition}, we easily obtain that
\begin{align}\label{NhF2-BQ}
\|\chi_{C_j(Q_i)}F_2B_Q\mm_i\|_{L^1(w)}\lesssim 
2^{-i\left(2M+\varepsilon\right)}2^{-j\left(2M+\frac{n}{p_0}-\widehat{r}n\right)},
\end{align}
for all  $j\geq 4$ and $i\geq 1$.

Note now that, for every $j\geq 4$, $i\geq 1$, $x\in C_{j}(Q_i)$, $\ell(Q)/\sqrt{2}<t\leq 2^{j-3}\ell(Q_i)/\sqrt{2}$, and $(y,\sqrt{2}t)\in \Gamma(x)$, we have that
$B(y,\sqrt{2}t)\subset 2^{j+2}Q_i\setminus 2^{j-1}Q_i$.
Then, proceeding as in the estimate of $F_2B_Q\mm_i$, but using this time the  off-diagonal estimates satisfied by the family $\{t^2Le^{-t^2L}\}_{t>0}$ instead of the ones satisfied by $\{e^{-t^2L}B_Q\}_{t>0}$,  we have that, for every $1\leq k\leq M$,
\begin{align*}
&F_2\left((k\ell(Q)^2L)^Me^{-k\ell(Q)^2L}\widetilde{\mm}_i\right)(x)
\\&\,\,
\lesssim
\sup_{(y,\sqrt{2}t)\in \Gamma(x),t>\frac{\ell(Q)}{\sqrt{2}}}
\left(\frac{\ell(Q)^2}{t^2+k\ell(Q)^2}\right)^{M}\left(\int_{B(y,\sqrt{2}t)}\left|e^{-t^2L}((t^2+k\ell(Q)^2)L)^Me^{-\left(t^2+k\ell(Q)^2\right)L}\widetilde{\mm}_i(z)\right|^2\frac{dz}{t^n}
\right)^{\frac{1}{2}}
\\&\,\,
\lesssim \|\widetilde{\mm}_i\|_{L^{p_0}(\R^n)}
\left(\sup_{\frac{\ell(Q)}{\sqrt{2}}<t\leq \frac{2^{j-3}\ell(Q_i)}{\sqrt{2}}}
\left(\frac{\ell(Q)}{t}\right)^{2M}t^{-\frac{n}{p_0}}e^{-c\frac{4^{j+i}\ell(Q)^2}{t^2}}
+\sup_{t>\frac{2^{j-3}\ell(Q_i)}{\sqrt{2}}}\left(\frac{\ell(Q)}{t}\right)^{2M}t^{-\frac{n}{p_0}}\right)
\\&\,\,
\lesssim w(Q_i)^{-1}\
2^{-i\left(2M+\varepsilon\right)}2^{-j\left(2M+\frac{n}{p_0}\right)}
.
\end{align*}
Then, $\|\chi_{C_j(Q_i)}F_2A_Q\mm\|_{L^1(w)}\lesssim 2^{-i\left(2M+\varepsilon\right)}2^{-j\left(2M+\frac{n}{p_0}-\widehat{r}n\right)}$,
for all  $j\geq 4$ and $i\geq 1$. This, \eqref{NhF2-BQ}, and \eqref{F2local}, and splitting the norm of $F_2\mm$ as in \eqref{splitting}, allow us to conclude that 
$
\|F_2\mm\|_{L^1(w)}\leq C.
$

\medskip

We now consider  $\Ncal_{\pp}$. Note that, in the proof of Proposition \ref{prop:acotacion-N}, part ($b$), (and following the notation introduced there with $f=\mm$) we saw that
$\Ncal_{\pp}\mm(x)\lesssim \mathfrak{m}_{\pp}\mm(x)+\Ncal_{\hh}\mm(x).$
Then, since we have already proved that $\|\Ncal_{\hh}\mm\|_{L^1(w)}\leq C$, we just need to consider $\mathfrak{m}_{\pp}\mm$. Applying,  the subordination formula \eqref{FR},  we have that
\begin{align*}
\mathfrak{m}_{\pp}\mm(x)&\lesssim
\sup_{(y,t)\in \Gamma(x)} \int_{0}^{\frac{1}{4}}u^{\frac{1}{2}}\left(\int_{B(y,t)}|(e^{-\frac{t^2}{4u}L}-e^{-t^2L})\mm(z)|^2\frac{dz}{t^n}\right)^{\frac{1}{2}}\frac{du}{u}
\\&
+\qquad
\sup_{(y,t)\in \Gamma(x)} \int_{\frac{1}{4}}^{\infty}e^{-u}u^{\frac{1}{2}}\left(\int_{B(y,t)}|(e^{-\frac{t^2}{4u}L}-e^{-t^2L})\mm(z)|^2\frac{dz}{t^n}\right)^{\frac{1}{2}}\frac{du}{u}=:I+II.
\end{align*}
Note that  $II$ is bounded by the term $II$ (with $f=\mm$) in the proof of Proposition  \ref{prop:acotacion-N}, part $(b)$. Hence, applying \cite[Proposition 3.2]{MartellPrisuelos} and Proposition \ref{prop:contro-mol-SF}, part ($a$),  we get
$$
\|II\|_{L^1(w)}\lesssim \int_{\frac{1}{4}}^{\infty}e^{-u}\|\Scal_{\hh}^{4\sqrt{u}}f\|_{L^1(w)}du
\lesssim \int_{\frac{1}{4}}^{\infty}u^{c}e^{-u}du\|\Scal_{\hh}\mm\|_{L^1(w)}\leq C,
$$
recall the definition of $\Scal_{\hh}^{4\sqrt{u}}$ in \eqref{squarealpha} and \eqref{square-H-1}.  

Next, we estimate $I$. We shall use the notation introduced before for  $\mm_i$, $\widetilde{\mm}_i$, $B_Q$, and $A_Q$, and also in \eqref{100}. Proceeding as in the  estimate of the term $I$ (with $f=\mm$) in the proof of Proposition \ref{prop:acotacion-N}, part ($b$), we have
\begin{align}\label{F2Pois}
I&\lesssim
 \sum_{l\geq 1}e^{-c4^l}\sup_{(y,t)\in \Gamma(x)} 
\left(\dashint_{B(y,2^{l+1}t)}\left(\int_{\frac{t}{\sqrt{2}}}^{\infty}|r^2Le^{-r^2L}\mm(z)|^2\frac{dr}{r}\right)^{\frac{p_0}{2}}dz\right)^{\frac{1}{p_0}}
\\\nonumber&
\lesssim
 \sum_{l\geq 1}e^{-c4^l}\sup_{(y,t)\in \Gamma(x), 0<t\leq \ell(Q)} 
\left(\dashint_{B(y,2^{l+1}t)}\left(\int_{0}^{\infty}|r^2Le^{-r^2L}\mm(z)|^2\frac{dr}{r}\right)^{\frac{p_0}{2}}dz\right)^{\frac{1}{p_0}}
\\\nonumber&\qquad +
 \sum_{l\geq 1}e^{-c4^l}\sup_{(y,t)\in \Gamma(x), t> \ell(Q)} 
\left(\dashint_{B(y,2^{l+1}t)}\left(\int_{\frac{t}{\sqrt{2}}}^{\infty}|r^2Le^{-r^2L}\mm(z)|^2\frac{dr}{r}\right)^{\frac{p_0}{2}}dz\right)^{\frac{1}{p_0}}
\\\nonumber&
=:\sum_{l\geq 1}e^{-c4^l}\left(
F_{1,l}\mm(x)+F_{2,l}\mm(x)\right).
\end{align}
We first estimate $F_{1,l}\mm(x)$. 
Note that  considering the following vertical square functions
$$
g_{\hh,1}\mm(x):=\left(\int_{0}^{\ell(Q)}|r^2Le^{-r^2L}\mm(x)|^2\frac{dr}{r}\right)^{\frac{1}{2}}\qquad \textrm{and}$$
$$
g_{\hh,2}\widetilde{\mm}(x):=\left(\int_{\ell(Q)}^{\infty}\left(\frac{\ell(Q)^2}{r^2}\right)^{2M}|(r^2L)^{M+1}e^{-r^2L}\widetilde{\mm}(x)|^2\frac{dr}{r}\right)^{\frac{1}{2}}.
$$
 We have that
\begin{multline}\label{F1l-p}
F_{1,l}\mm(x)\lesssim\sup_{(y,t)\in \Gamma(x), 0<t\leq \ell(Q)} 
\left(\dashint_{B(y,2^{l+1}t)}|g_{\hh,1}\mm(z)|^{p_0}dz\right)^{\frac{1}{p_0}}
\\
+\sup_{(y,t)\in \Gamma(x), 0<t\leq \ell(Q)} 
\left(\dashint_{B(y,2^{l+1}t)}|g_{\hh,2}\widetilde{\mm}(z)|^{p_0}dz\right)^{\frac{1}{p_0}}=:F_{1,l}^1\mm(x)+F_{1,l}^2\widetilde{\mm}(x).
\end{multline}
Applying H\"older's inequality, \eqref{doublingcondition} and by the boundedness on $L^p(w)$ of the maximal operator $\mathcal{M}_{p_0}$ (recall that $w\in A_{\frac{p}{p_0}}$) and the vertical square function $g_{\hh,1}$ (see \cite{AuscherMartell:III}), and by \eqref{astast}, we have that
\begin{align*}
\left\|\chi_{2^{l+3}Q_i}F_{1,l}^1\mm_i\right\|_{L^1(w)}\lesssim
\|\chi_{2^{l+3}Q_i}\mathcal{M}_{p_0}(g_{\hh,1}\mm_i)\|_{L^1(w)}
\lesssim w(2^{l}Q_i)^{\frac{1}{p'}}\|\mathcal{M}_{p_0}(g_{\hh,1}\mm_i)\|_{L^p(w)}\lesssim 2^{ln\widehat{r}}2^{-i\varepsilon}.
\end{align*}
Now observe that for every $i\geq 1$, $j\geq l+3$, $x\in C_{j}(Q_i)$, $0<t\leq \ell(Q)$, and $(y,t)\in \Gamma(x)$ we have that $B(y,2^{l+1}t)\subset 2^{j+2}Q_i\setminus 2^{j-1}Q_i$. Then, applying H\"older's inequality, Minkowski's integral inequality, the fact that $w\in RH_{\left(\frac{q}{p}\right)'}$ and $\{r^2Le^{-r^2L}\}_{r>0}\in \mathcal{F}(L^{p_0}-L^q)$, and Lemma \ref{lemma:m-h}, we obtain that
\begin{multline*}
\left\|\chi_{C_j(Q_i)}F_{1,l}^1\mm_i\right\|_{L^1(w)}
\lesssim
w(2^{j+1}Q_i)^{\frac{1}{p'}}\|\mathcal{M}_{p_0}(\chi_{2^{j+2}Q_i\setminus 2^{j-1}Q_i}g_{\hh,1}\mm_i)\|_{L^p(w)}
\\
\lesssim
w(2^{j+1}Q_i)^{\frac{1}{p'}}\|\chi_{2^{j+2}Q_i\setminus 2^{j-1}Q_i}g_{\hh,1}\mm_i\|_{L^p(w)}
\lesssim
w(2^{j+1}Q_i)|2^{j+1}Q_i|^{-\frac{1}{q}}\|\chi_{2^{j+2}Q_i\setminus 2^{j-1}Q_i}g_{\hh,1}\mm_i\|_{L^q(\R^n)}
\\
\lesssim
w(2^{j+1}Q_i)|2^{j+1}Q_i|^{-\frac{1}{q}}\|\mm_i\|_{L^{p_0}(\R^n)}
\left(\int_0^{\ell(Q)}e^{-c\frac{4^{j+i}\ell(Q)^2}{r^2}}r^{-\frac{2n}{p_0}+\frac{2n}{q}}\frac{dr}{r}\right)^{\frac{1}{2}}
\lesssim
e^{-c4^{j+i}}.
\end{multline*}
Therefore,
\begin{align}\label{splitting2}
\left\|F_{1,l}^1\mm\right\|_{L^1(w)}\lesssim \sum_{i\geq 1}\|\chi_{2^{l+3}Q_i}F_{1,l}^1\mm_i\|_{L^1(w)}+\sum_{i\geq 1}\sum_{j\geq l+3}\|\chi_{C_j(Q_i)}F_{1,l}^1\mm_i\|_{L^1(w)}\lesssim 2^{ln\widehat{r}}.
\end{align}
Similarly, noticing that $g_{\hh,2}$, (disregarding the factor $(\ell(Q)^2/r^2))^{2M}$ since it is controlled by one), is bounded on $L^p(w)$ (see \cite{AuscherMartell:III}), we get
$$
\left\|\chi_{2^{l+3}Q_i}F_{1,l}^2\widetilde{\mm}_i\right\|_{L^1(w)}\lesssim
 w(2^{l}Q_i)^{\frac{1}{p'}}\|\mathcal{M}_{p_0}(g_{\hh,2}\widetilde{\mm}_i)\|_{L^p(w)}\lesssim 2^{ln\widehat{r}}2^{-i\varepsilon},
$$
and, since $\{(r^2L)^{M+1}e^{-r^2L}\}_{r>0}\in \mathcal{F}(L^{p_0}-L^q)$, proceeding as before,
\begin{align*}
&\left\|\chi_{C_j(Q_i)}F_{1,l}^2\widetilde{\mm}_i\right\|_{L^1(w)}
\\
&\qquad\lesssim
w(2^{j+1}Q_i)|2^{j+1}Q_i|^{-\frac{1}{q}}\|\widetilde{\mm}_i\|_{L^{p_0}(\R^n)}
\left(\int_{\ell(Q)}^{\infty}\left(\frac{\ell(Q)^2}{r^2}\right)^{2M}e^{-c\frac{4^{j+i}\ell(Q)^2}{r^2}}r^{-\frac{2n}{p_0}+\frac{2n}{q}}\frac{dr}{r}\right)^{\frac{1}{2}}
\\&\qquad
\lesssim
2^{-j\left(2M+\frac{n}{p_0}-n\widehat{r}\right)}2^{-i(2M+\varepsilon)}.
\end{align*}
Hence, splitting $\left\|F_{1,l}^2\widetilde{\mm}\right\|_{L^1(w)}$ as in \eqref{splitting2}, and by \eqref{F1l-p}, we obtain that
$
\|F_{1,l}\mm\|_{L^1(w)}\leq 2^{ln\widehat{r}}.
$ 

Let us turn to the estimate of $F_{2,l}\mm$.
Consider the vertical square function
$$
\mathfrak{g}_{\hh,t}\widetilde{\mm}(x):=\left(\int_{\frac{t}{\sqrt{2}}}^{\infty}\left(\frac{\ell(Q)}{r}\right)^{4M}|(r^2L)^{M+1}e^{-r^2L}\widetilde{\mm}(x)|^2\frac{dr}{r}\right)^{\frac{1}{2}},
$$
and note that
\begin{align}\label{F2lP}
\|F_{2,l}\mm\|_{L^1(w)}&\leq \sum_{i\geq 1}\sum_{j\geq 1}\left(\left\|\chi_{C_j(Q_i)}\sup_{(y,t)\in \Gamma(\cdot),\ell(Q)<t\leq 2^{j-l-4}\ell(Q_i)}\left(\dashint_{B(y,2^{l+1}t)}|\mathfrak{g}_{\hh,t}\widetilde{\mm}_i(z)|^{p_0}dz\right)^{\frac{1}{p_0}}\right\|_{L^1(w)}
\right.
\\\nonumber&\hspace*{2.5cm}
\left. +
\left\|\chi_{C_j(Q_i)}\sup_{(y,t)\in \Gamma(\cdot),t>2^{j-l-4}\ell(Q_i)}\left(\dashint_{B(y,2^{l+1}t)}|\mathfrak{g}_{\hh,t}\widetilde{\mm}_i(z)|^{p_0}dz\right)^{\frac{1}{p_0}}\right\|_{L^{1}(w)}\right)
\\\nonumber&
=:\sum_{i\geq 1}\sum_{j\geq 1}\left(\|\chi_{C_j(Q_i)}F_{2,l}^1\widetilde{\mm}_i\|_{L^1(w)}+\|\chi_{C_j(Q_i)}F_{2,l}^2\widetilde{\mm}_i\|_{L^1(w)}\right)
\\&\nonumber
\lesssim
\sum_{i\geq 1}\|\chi_{2^{l+3}Q_i}F_{2,l}^1\widetilde{\mm}_i\|_{L^1(w)}+\sum_{i\geq 1}\sum_{j\geq l+3}\|\chi_{C_j(Q_i)}F_{2,l}^1\widetilde{\mm}_i\|_{L^1(w)}
\\\nonumber&\qquad
+
\sum_{i\geq 1}
\|\chi_{2^{l+3}Q_i}F_{2,l}^2\widetilde{\mm}_i\|_{L^1(w)}
+
\sum_{i\geq 1}\sum_{j\geq l+3}\|\chi_{C_j(Q_i)}F_{2,l}^2\widetilde{\mm}_i\|_{L^1(w)}
.
\end{align}
Next, for every  $
\ell(Q)<t<\sqrt{2}r$ we have that $\mathfrak{g}_{\hh,t}$ is controlled by $g_{\hh}$ (where $g_{\hh}$ is defined in the proof of Proposition \ref{prop:acotacion-N}, part $b$) and $g_{\hh}$ is bounded on $L^p(w)$ (see \cite{AuscherMartell:III}), hence, for $a=1,2$, 
\begin{align*}
\left\|\chi_{2^{l+3}Q_i}F_{2,l}^a\widetilde{\mm}_i\right\|_{L^1(w)}
\lesssim
 w(2^{l}Q_i)^{\frac{1}{p'}}\|\mathcal{M}_{p_0}({g}_{\hh}\widetilde{\mm}_i)\|_{L^p(w)}
\lesssim w(2^{l}Q_i)^{\frac{1}{p'}}\|\widetilde{\mm}_i\|_{L^p(w)}\lesssim 2^{ln\widehat{r}}2^{-i\varepsilon}.
\end{align*}
We observe now that for every $i\geq 1$, $j\geq l+3$, $x\in C_j(Q_i)$, $\ell(Q)<t\leq \frac{2^{j-3}}{2^{l+1}}\ell(Q_i)$, and $(y,t)\in \Gamma(x)$, we have that
$B(y,2^{l+1}t)\subset 2^{j+2}Q_i\setminus 2^{j-1}Q_i$. Therefore, arguing as in the estimate of $\left\|\chi_{C_j(Q_i)}F_{1,l}^1{\mm}_i\right\|_{L^1(w)}$ and $\left\|\chi_{C_j(Q_i)}F_{1,l}^2\widetilde{\mm}_i\right\|_{L^1(w)}$, we have that
$$
\left\|\chi_{C_j(Q_i)}F_{2,l}^1\widetilde{\mm}_i\right\|_{L^1(w)}
\lesssim
 w(2^{j+1}Q_i)^{\frac{1}{p'}}\|\mathcal{M}_{p_0}(\chi_{2^{j+2}Q_i\setminus 2^{j-1}Q_i}\mathfrak{g}_{\hh,\ell(Q)}\widetilde{\mm}_i)\|_{L^p(w)}
\lesssim 
2^{-i\left(2M+\varepsilon\right)}
2^{-j\left(2M+\frac{n}{p_0}-n\widehat{r}\right)},
$$
 and
\begin{align*}
\left\|\chi_{C_j(Q_i)}F_{2,l}^2\widetilde{\mm}_i\right\|_{L^1(w)}
&\lesssim 
w(2^{j+1}Q_i)|2^{j+1}Q_i|^{-\frac{1}{q}}\|\widetilde{\mm}_i\|_{L^{p_0}(\R^n)}\left(\int_{\frac{2^{j-l-4}\ell(Q_i)}{\sqrt{2}}}^{\infty}\left(\frac{\ell(Q)}{r}\right)^{4M}r^{-2n\left(\frac{1}{p_0}-\frac{1}{q}\right)}\frac{dr}{r}\right)^{\frac{1}{2}}
\\
&\lesssim 2^{cl}
2^{-i\left(2M+\varepsilon\right)}
2^{-j\left(2M+\frac{n}{p_0}-n\widehat{r}\right)}.
\end{align*}
Consequently,  by \eqref{F2lP} we conclude that $\|F_{2,l}\mm\|_{L^1(w)}\leq 2^{lc}$. Then, in view of \eqref{F2Pois}, this and the estimate obtained for $\|F_{1,l}\mm\|_{L^1(w)}$ imply that
$\|I\|_{L^1(w)}\leq C$, which finishes the proof.
\end{proof}
%%%%%%%%%%%%%%%%%%%%%%%%%%%%%%%%%%%%%%%%%%%%%%%%%%%%%%%%%%%%%%%%%%%%%%%%%%%%%%%%%%%%%%%
%%%%%%%%%%%%%%%%%%%%%%%%%%%%%%%%%%%%%%%%%%%%%%%%%%%%%%%%%%%%%%%%%%%%%%%%%%%%%%%%%%%%%%%

\bigskip

Hence, we are ready to  prove the next proposition which easily implies Theorem \ref{thm:hardychartzNontangential}.

\begin{proposition}\label{lemma:hardy-N}
Let $w\in A_{\infty}$, $p\in \mathcal{W}_w(p_-(L),p_+(L))$, $M\in \N$ such that $M>\frac{n}{2}\left(r_w-\frac{1}{2}\right)$, and $\varepsilon_0=2M+2+\frac{n}{2}-r_wn$, there hold
\begin{list}{$(\theenumi)$}{\usecounter{enumi}\leftmargin=1cm \labelwidth=1cm\itemsep=0.2cm\topsep=.2cm \renewcommand{\theenumi}{\alph{enumi}}}

\item[(a)]$
\mathbb{H}^1_{\Ncal_{\hh},p}(w)=\mathbb{H}^1_{\Scal_{\hh},p}(w)= \mathbb{H}^1_{L,p,\varepsilon_0, M}(w),
$
 with equivalent norms.

\item[(b)]
$
\mathbb{H}^1_{\Ncal_{\pp},p}(w)=\mathbb{H}^1_{\Gcal_{\pp},p}(w)= \mathbb{H}^1_{L,p,\varepsilon_0, M}(w),
$
 with equivalent norms.
\end{list}
\end{proposition}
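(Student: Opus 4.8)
The plan is to prove Proposition~\ref{lemma:hardy-N} by combining the uniform molecular estimates from Proposition~\ref{prop:contro-mol-NT} (for the ``easy'' inclusions into the non-tangential Hardy spaces) with the square-function characterizations already established in Proposition~\ref{lema:SH-1} and Proposition~\ref{lemma:SKP} together with Proposition~\ref{prop:comparacion-N-G} (for the converse inclusions). I will treat parts $(a)$ and $(b)$ simultaneously since the arguments run in parallel, the Poisson case being a touch more delicate.

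First I would prove the inclusions $\mathbb{H}^1_{L,p,\varepsilon_0,M}(w)\subset\mathbb{H}^1_{\Ncal_{\hh},p}(w)$ and $\mathbb{H}^1_{L,p,\varepsilon_0,M}(w)\subset\mathbb{H}^1_{\Ncal_{\pp},p}(w)$. Given $f\in\mathbb{H}^1_{L,p,\varepsilon_0,M}(w)$, in particular $f\in L^p(w)$, so $f$ lies in the domain of $\Ncal_{\hh}$ and $\Ncal_{\pp}$ (which are bounded on $L^p(w)$ by Proposition~\ref{prop:acotacion-N}, using $p\in\mathcal{W}_w(p_-(L),p_+(L))\subset\mathcal{W}_w(p_-(L),\infty)$); and Proposition~\ref{prop:contro-mol-NT}, part $(b)$, gives $\|\Ncal_{\hh}f\|_{L^1(w)}+\|\Ncal_{\pp}f\|_{L^1(w)}\lesssim\|f\|_{\mathbb{H}^1_{L,p,\varepsilon_0,M}(w)}$. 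Hence $f\in\mathbb{H}^1_{\Ncal_{\hh},p}(w)\cap\mathbb{H}^1_{\Ncal_{\pp},p}(w)$ with the norm control in the right direction. Note this step needs $M>\frac{n}{2}(r_w-\frac1{p_-(L)})$, which is implied by the hypothesis $M>\frac{n}{2}(r_w-\frac12)$ together with $p_-(L)<2$.

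Next I would run the reverse inclusions via the square functions. For $(a)$: since $\Grm_{\hh}f(x)\le\Gcal_{\hh}f(x)$ and, by Lemma~\ref{lema:comparacion,SH-GH,SK-GK}(a), $\|\Scal_{\hh}f\|_{L^p(w)}=\|\Scal_{1,\hh}f\|_{L^p(w)}\lesssim\|\Grm_{0,\hh}f\|_{L^p(w)}=\|\Grm_{\hh}f\|_{L^p(w)}$, Proposition~\ref{prop:comparacion-N-G}(b) (extended to $L^q(w)$-functions via Remark~\ref{remark:classesoffunctions-N}) yields $\|\Scal_{\hh}f\|_{L^1(w)}\lesssim\|\Grm_{\hh}f\|_{L^1(w)}\lesssim\|\Ncal_{\hh}f\|_{L^1(w)}$ for $f\in\mathbb{H}^1_{\Ncal_{\hh},p}(w)$; since $\Ncal_{\hh}$ is bounded on $L^p(w)$ and $\Scal_{\hh}$ too, $f\in L^p(w)$ and so $f\in\mathbb{H}^1_{\Scal_{\hh},p}(w)$. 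Then Proposition~\ref{lema:SH-1}, parts $(a)$ and $(c)$, gives $\mathbb{H}^1_{\Scal_{\hh},p}(w)=\mathbb{H}^1_{\Scal_{1,\hh},p}(w)=\mathbb{H}^1_{L,p,\varepsilon,M}(w)$ with equivalent norms for any admissible $\varepsilon,M$; since $H^1_{L,p,\varepsilon,M}(w)$ does not depend on $\varepsilon,M$ (Theorem~\ref{thm:hardychartz}, already proven through Proposition~\ref{lema:SH-1}), we may choose $\varepsilon_0=2M+2+\frac n2-r_wn$ and this particular $M$, obtaining $\mathbb{H}^1_{\Ncal_{\hh},p}(w)\subset\mathbb{H}^1_{\Scal_{\hh},p}(w)=\mathbb{H}^1_{L,p,\varepsilon_0,M}(w)$. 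Combined with the first step, $\mathbb{H}^1_{\Ncal_{\hh},p}(w)=\mathbb{H}^1_{\Scal_{\hh},p}(w)=\mathbb{H}^1_{L,p,\varepsilon_0,M}(w)$ with equivalent norms, which is $(a)$. For $(b)$ the argument is identical with Poisson replacing heat: use $\Grm_{\pp}f\le\Gcal_{\pp}f$, Lemma~\ref{lema:comparacion,SH-GH,SK-GK}(b) to pass from $\Scal_{\pp}=\Scal_{1,\pp}$ to $\Grm_{0,\pp}=\Grm_{\pp}$, Proposition~\ref{prop:comparacion-N-G}(a) to dominate $\Gcal_{\pp}$ by $\Ncal_{\pp}$ in $L^1(w)$, and then Proposition~\ref{lemma:SKP}, parts $(a)$ and $(c)$, with $K=1$, noting that the value $\varepsilon_0=2M+2K+\frac n2-nr_w$ from Proposition~\ref{lemma:SKP} specializes precisely to $2M+2+\frac n2-r_wn$ when $K=1$, and $M>\frac n2(r_w-\frac12)$ is exactly the hypothesis there.

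The main obstacle is Proposition~\ref{prop:contro-mol-NT} itself, i.e.\ proving that $\|\Ncal_{\hh}\mm\|_{L^1(w)}+\|\Ncal_{\pp}\mm\|_{L^1(w)}\lesssim 1$ uniformly over molecules, and the subordination-formula bookkeeping needed to reduce $\Ncal_{\pp}$ to $\Ncal_{\hh}$ plus vertical/conical square functions of $\mm$; but that proposition is stated (with proof) in the excerpt, so here it can be invoked as a black box. A secondary technical point to be careful about is bookkeeping of the allowable parameter ranges: one must check that the specific $\varepsilon_0$ and $M$ appearing in the statement are simultaneously admissible for Proposition~\ref{lema:SH-1} (any $\varepsilon>0$ and $M>\frac n2(r_w-\frac1{p_-(L)})$, with the latter following from $M>\frac n2(r_w-\frac12)$) and for Proposition~\ref{lemma:SKP} with $K=1$ (which requires exactly $M>\frac n2(r_w-\frac12)$ and forces $\varepsilon_0=2M+2+\frac n2-nr_w$), so that one genuinely lands in the \emph{same} space $\mathbb{H}^1_{L,p,\varepsilon_0,M}(w)$ in both $(a)$ and $(b)$; this is where the independence of $H^1_L(w)$ from $p,\varepsilon,M$ (Theorems~\ref{thm:hardychartz} and~\ref{thm:hardychartzPoisson}) is used to glue everything together.
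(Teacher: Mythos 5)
Your proposal is correct and follows essentially the same route as the paper: Proposition \ref{prop:contro-mol-NT} (via Proposition \ref{prop:acotacion-N}) for the inclusions of $\mathbb{H}^1_{L,p,\varepsilon_0,M}(w)$ into the non-tangential spaces, and Lemma \ref{lema:comparacion,SH-GH,SK-GK}, Proposition \ref{prop:comparacion-N-G} with Remark \ref{remark:classesoffunctions-N}, and Propositions \ref{lema:SH-1} and \ref{lemma:SKP} (the latter with $K=1$) for the converse inclusions. The parameter bookkeeping you flag ($M>\frac{n}{2}(r_w-\frac12)\Rightarrow M>\frac{n}{2}(r_w-\frac{1}{p_-(L)})$ and $\varepsilon_0$ matching the value in Proposition \ref{lemma:SKP}) is exactly the point the paper's choice of $\varepsilon_0$ and $M$ is designed to handle.
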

\begin{proof}
Fix $w\in A_{\infty}$, $p\in \mathcal{W}_w(p_-(L),p_+(L))$, $M\in \N$ such that $M>\frac{n}{2}\left(r_w-\frac{1}{2}\right)$, and $\varepsilon_0=2M+2+\frac{n}{2}-r_wn$.

In order to prove part $(a)$, note that 
for $f\in \mathbb{H}^1_{L,p,\varepsilon_0, M}(w)$, Proposition \ref{prop:contro-mol-NT}, part ($b$) yields that
$$
\|f\|_{\mathbb{H}^1_{\Ncal_{\hh},p}(w)}=\|\Ncal_{\hh}f\|_{L^1(w)}\lesssim\|f\|_{\mathbb{H}^1_{L,p,\varepsilon_0, M}(w)}.
$$ 
Therefore, since in particular $f\in L^p(w)$, we have that $f\in \mathbb{H}^1_{\Ncal_{\hh},p}(w)$.

\medskip

Take now  $f\in \mathbb{H}^1_{\Ncal_{\hh},p}(w)$. Lemma \ref{lema:comparacion,SH-GH,SK-GK}, part ($a$), Proposition \ref{prop:comparacion-N-G}, part ($b$), and Remark \ref{remark:classesoffunctions-N} imply
$$
\|\Scal_{\hh}f\|_{L^1(w)}\lesssim\|\Grm_{\hh}f\|_{L^1(w)}\lesssim \|\Ncal_{\hh}f\|_{L^1(w)}.
$$ 
Then $f\in \mathbb{H}^1_{\Scal_{\hh},p}(w)$. Consequently, from Proposition \ref{lema:SH-1}, part ($a$), $f\in \mathbb{H}^1_{L,p,\varepsilon_0,M}(w)$ and 
$$
\|f\|_{\mathbb{H}^1_{L,p,\varepsilon_0, M}(w)}\lesssim \|f\|_{\mathbb{H}^1_{\Scal_{\hh},p}(w)}\lesssim \|f\|_{\mathbb{H}^1_{\Ncal_{\hh},p}(w)}.
$$

\medskip 

As for part ($b$), take $f\in \mathbb{H}^1_{L,p,\varepsilon_0,M}(w)$ and apply Proposition \ref{prop:contro-mol-NT}, part ($b$), to obtain
$$
\|f\|_{\mathbb{H}^1_{\Ncal_{\pp},p}(w)}=\|\Ncal_{\pp}f\|_{L^1(w)}\lesssim\|f\|_{\mathbb{H}^1_{L,p,\varepsilon_0,M}(w)}.
$$
Hence, since again $f\in L^p(w)$, we have that
$f\in \mathbb{H}_{\Ncal_{\pp},p}^1(w).$

Finally, notice that for $f\in \mathbb{H}^1_{\Ncal_{\pp},p}(w)$  Proposition \ref{prop:comparacion-N-G}, part ($a$), and Remark \ref{remark:classesoffunctions-N} imply that 
$$
\|\Gcal_{\pp}f\|_{L^1(w)}\lesssim\|\Ncal_{\pp}f\|_{L^1(w)}.
$$
Therefore,  $f\in \mathbb{H}^1_{\Gcal_{\pp},p}(w)$.
Then, applying Proposition \ref{lemma:SKP}, part ($c$), we conclude that
$$
\|f\|_{\mathbb{H}^1_{L,p,\varepsilon_0, M}(w)}\lesssim \|f\|_{\mathbb{H}^1_{\Gcal_{\pp},p}(w)}\lesssim\|f\|_{\mathbb{H}^1_{\Ncal_{\pp},p}(w)}.
$$
and  $f\in \mathbb{H}^1_{L,p,\varepsilon_0, M}(w)$.
\end{proof}

\bigskip

\end{document}